\documentclass[a4paper,11pt]{amsart}
\usepackage[T1]{fontenc} 
\usepackage{amsmath,amsthm}
\usepackage{enumerate}
\usepackage{mathtools}
\usepackage[francais,english]{babel}
\usepackage[dvips]{graphicx}
\usepackage{todonotes}
\usepackage{mathdots} 
\usepackage{color}
\usepackage{tikz}
\usepackage{tikz-cd}
\usetikzlibrary{arrows,positioning,shapes,matrix,backgrounds}
\usetikzlibrary{fit,calc,decorations.pathreplacing}

\usepgflibrary{arrows}
\usepackage{soul}
\usepackage{amsfonts,amssymb}
\usepackage{geometry}
\geometry{a4paper,hmargin=1in, vmargin=1in }
\usepackage{hyperref}
\usepackage{stmaryrd}
\usepackage{fancyhdr}
\usepackage{url}
\usepackage{todonotes}
\usepackage{dsfont}
\usepackage[utf8]{inputenc} 
\usepackage{enumerate}
\usepackage[shortlabels]{enumitem}
\usepackage[all]{xy}
\usepackage{mathtools}
\usepackage{tikz}
\usepackage{extarrows}

\theoremstyle{theorem}

\newtheorem*{thm*}{Theorem}

\newtheorem{prop}{Proposition}[section]

\newtheorem{lemma}[prop]{Lemma}

\newtheorem{thm}[prop]{Theorem}
\newtheorem{definition}[prop]{Definition}

\newtheorem{corollary}[prop]{Corollary}

\newtheorem{obs}[prop]{Observation}

\newtheoremstyle{pourlesremarques}{\topsep}{\topsep}{\normalfont}{}{\bfseries}{.}{ }{}
\theoremstyle{pourlesremarques}
\newtheorem{rem}[prop]{Remark}
\newtheorem*{rem*}{Remark}
\newtheoremstyle{pourlesexemples}{\topsep}{\topsep}{\normalfont}{}{\bfseries}{.}{ }{}
\theoremstyle{pourlesexemples}

\def\presuper#1#2%
  {\mathop{}%
   \mathopen{\vphantom{#2}}^{#1}%
   \kern-\scriptspace%
   #2}

\def\Rep{\operatorname{Rep}}

\newcommand{\J}{\mathrm{J}}


\newcommand{\cP}{\mathcal{P}}

\newcommand{\Fam}{\mathcal{F}}

\newcommand{\univ}{\mathcal{U}}

\newcommand{\K}{\mathcal{K}}
\newcommand{\Kbar}{\overline{\mathcal{K}}}

\newcommand{\Hom}{\operatorname{Hom}}
\renewcommand{\subset}{\subseteq}
\renewcommand{\supset}{\supseteq}

\newcommand{\cS}{\mathcal{S}}

\newcommand{\cB}{\mathcal{B}}

\newcommand{\diag}{\operatorname{diag}}

\newcommand{\GL}{\operatorname{GL}}

\def\Spec{\operatorname{Spec}}

\newcommand{\fS}{{\mathfrak{S}}}


%

\newcommand{\C}{\mathbb{C}}

\newcommand{\fri}{\mathfrak{i}}

\newcommand{\cL}{\mathcal{L}}

\newcommand{\cZ}{\mathcal{Z}}
\newcommand{\cR}{\mathcal{R}}
\newcommand{\ZZ}{\mathbb{Z}}

\newcommand{\fR}{\mathfrak{R}}
\newcommand{\fI}{\mathfrak{I}}

\def\End{\operatorname{End}}

\def\GL{\operatorname{GL}}

\def\Fr{\operatorname{Fr}}

\def\qlb{\overline{\mathbb{Q}_{\ell}}}

\def\Rep{\operatorname{Rep}}

\def\Fl{\overline{\mathbb{F}_{\ell}}}
\def\Gal{\operatorname{Gal}}

\def\supp{\operatorname{supp}}

\def\diag{\operatorname{diag}}
\def\sig{\operatorname{sig}}

\def\leq{\leqslant}
\def\geq{\geqslant}

\def\wflb{W(\overline{\mathbb{F}_{\ell}})}
\def\univ{\operatorname{univ}}
\def\Frac{\operatorname{Frac}}

\subjclass{11S23, 22E50}

\title{The universal Harish--Chandra $j$-function}
\author{Gil Moss}
\address{Department of Mathematics and Statistics, University of Maine, Orono, ME 04469}
\email{gilbert.moss@maine.edu}
\author{Justin Trias}
\address{School of Mathematics, University of East Anglia, Norwich, Norfolk, NR4 7TJ, UK}
\email{j.trias@uea.ac.uk}
\date{}

\begin{document}

\maketitle

\begin{abstract}
Let $F$ be a nonarchimedean local field with residue field of cardinality $q$, let $G$ be the $F$-points of a connected reductive group defined over $F$, let $P$ and $Q$ be two parabolic subgroups with the same Levi factor $M$. We construct intertwining operators $J_{Q|P}$ and the Harish-Chandra $j$-function $j^G$ for finitely generated smooth $A[M]$-modules, where $A$ is any commutative Noetherian algebra over $\ZZ' :=\ZZ[\sqrt{q}^{-1}]$. The construction is functorial, compatible with extension of scalars, and generalizes the previous constructions in \cite{waldspurger, dat, dhkm_conjecture, girsch_twisted}. We prove a generic Schur's lemma result for parabolic induction, which circumvents the need for generic irreducibility in defining $j^G$. Setting $A=\ZZ'$ and applying the construction to finitely generated projective generators produces a universal $j$-function that is a rational function with coefficients in the Bernstein center of $M$ over $\ZZ'$, and which gives the $j$-function of any object via specializing at points of the Bernstein scheme. We conclude by characterizing the local Langlands in families morphism (when it exists) for quasisplit classical groups in terms of an equality of $j$-functions.
\end{abstract}

\section{Introduction}
Let $F$ be a nonarchimedean local field with residue field of cardinality $q$. Let $G$ be the $F$-points of a connected reductive group defined over $F$, let $P=MU_P$ and $Q=MU_Q$ be two parabolic subgroups with the same Levi factor $M$. Let $A$ be a commutative Noetherian algebra over $\ZZ' :=\ZZ[\sqrt{q}^{-1}]$, and let $i_P^G(\sigma_0)$ be the normalized parabolic induction of a smooth $A[M]$-module $\sigma_0$. To compare $i_P^G(\sigma_0)$ to $i_Q^G(\sigma_0)$, one can fix a $\ZZ[1/p]$-valued Haar measure on $U_Q$ and write down an integral operator
\begin{align}\label{integraloperator}
J_{Q|P}(\sigma_0): i_P^G(\sigma_0)&\mapsto i_Q^G(\sigma_0)\\
f&\mapsto J_{Q|P}(\sigma_0)(f)(g) = \int_{U_Q/(U_Q\cap U_P)}f(ug)du\ .\nonumber
\end{align}
But this integral does not necessarily converge. The theory of intertwining operators is concerned with identifying families of $\sigma_0$ where the operator is defined. Intertwining operators have become a fundamental tool in the harmonic analysis of $p$-adic groups and the local Langlands program. When $A=\C$, Harish-Chandra developed this theory in an analytic way, relying on the manifold structure of the families of $\sigma_0$ that are composed of orbits of the discrete series under unitary unramified twisting. Bernstein, in unpublished notes, proposed a purely algebraic approach using the geometric lemma, which was refined and used by Waldspurger in \cite{waldspurger} to simplify Harish-Chandra's proof of the Plancherel formula. In \cite{dat}, Dat constructs intertwining operators over a general ring, subject to a $(P,Q)$-regularity hypothesis, which he establishes when $A$ is a field of characteristic $\ell\neq p$ and $\sigma_0$ is irreducible.  Our goal is to expand the algebraic framework of intertwining operators to arbitrary Noetherian $\ZZ'$-algebras $A$ and arbitrary smooth $A[M]$-modules.

Let $M^0$ be the subgroup generated by all compact subgroups, let $R$ be the finite free commutative $A$-algebra $A[M/M^0]$, and let $\Psi_M:M\to R^{\times}$ be the universal unramified character. Let $\sigma=\sigma_0\cdot \Psi_M$ be the universal unramified twist of $\sigma_0$, i.e., the $R$-module $\sigma_0\otimes_AR$ with $M$-action
$\sigma(m)(v\otimes 1) = \sigma_0(m)v\otimes \overline{m}$ for $v\in \sigma_0$, $m\in M$, and $\overline{m}$ the image of $m$ in $M/M^0$. 
\begin{thm}\label{introthm:signature}
Let $\sigma_0$ be a finitely generated $A[M]$ module satisfying Schur's lemma (i.e. the natural map $A\to \End_{A[M]}(\sigma_0)$ is an isomorphism). There exists a canonical injective homomorphism of $R$-modules $$\sig_{PQ}:\Hom_{R[M]}(i_P^G(\sigma),i_Q^G(\sigma))\to \End_{R[M]}(\sigma) =R.$$
\end{thm}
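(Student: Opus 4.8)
The plan is to transport the question to Jacquet modules via Frobenius reciprocity, apply the geometric lemma, and isolate the unique ``generic'' constituent of the resulting $M$-representation. First, $\End_{R[M]}(\sigma)=R$: the ring $R=A[M/M^0]$ is finite free, hence faithfully flat, over $A$, and $\sigma_0$ is finitely presented as a smooth $A[M]$-module (coherence of the Hecke algebra over the Noetherian ring $A$, as recorded earlier), so $\Hom$ commutes with the base change $A\to R$; since twisting the $M$-action by the central, unit-valued character $\Psi_M$ leaves the endomorphism ring unchanged, Schur's lemma for $\sigma_0$ gives $\End_{R[M]}(\sigma)=\End_{A[M]}(\sigma_0)\otimes_AR=R$. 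Next, normalized Frobenius reciprocity over $R$ yields $\Hom_{R[G]}(i_P^G\sigma,i_Q^G\sigma)\cong\Hom_{R[M]}(r_Q^Gi_P^G\sigma,\sigma)$, and I would build $\sig_{PQ}$ on the right. The geometric lemma filters $\pi:=r_Q^Gi_P^G\sigma$ with graded pieces $\mathrm{gr}_{[w]}\cong i^M_{L_w}\bigl(\mathrm{Ad}(w)(r^M_{L'_w}\sigma)\bigr)$ indexed by $[w]\in W_M\backslash W/W_M$, where $L_w=M\cap wMw^{-1}$ and $L'_w=M\cap w^{-1}Mw$, compatibly with the closure order on the orbits $QwP\subseteq G$. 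As always $QP$ is open in its closure $\overline{QP}$ (a union of orbits), so the subrepresentations of $\pi$ attached to the closed sets $\overline{QP}$ and $\overline{QP}\setminus QP$ give a canonical exact sequence $0\to\pi''\to\pi'\to\sigma\to0$ with $\pi'/\pi''=\mathrm{gr}_{[1]}=\sigma$ (since $L_1=L'_1=M$ and $\mathrm{Ad}(1)=\mathrm{id}$). Granting the vanishing below, $\Hom_{R[M]}(\pi'',\sigma)=0$, so for $\Phi$ with adjoint $\tilde\Phi\colon\pi\to\sigma$ the restriction $\tilde\Phi|_{\pi'}$ kills $\pi''$ and factors through $\pi'/\pi''=\sigma$, which I take to be $\sig_{PQ}(\Phi)\in\End_{R[M]}(\sigma)=R$. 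This is $R$-linear and canonical, depending only on the orbit stratification, and one checks the natural normalization $\sig_{PP}(\mathrm{id})=1$ by identifying $\sig_{PP}$ with the restriction to the closed stratum of the counit $r_P^Gi_P^G\sigma\to\sigma$.

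For injectivity I would argue: if $\sig_{PQ}(\Phi)=0$ then $\tilde\Phi$ vanishes on $\pi'$, hence factors through $\pi/\pi'$, which is filtered by the pieces $\mathrm{gr}_{[w]}$ with $[w]\neq[1]$; the same vanishing then forces $\tilde\Phi=0$, so $\Phi=0$. Thus the whole theorem reduces to the vanishing $\Hom_{R[M]}(\mathrm{gr}_{[w]},\sigma)=0$ for all $[w]\neq[1]$.

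I would prove this vanishing by separation of unramified central characters, made universal by $\Psi_M$. Let $A_M\subseteq Z(M)$ be the maximal split central torus. By Schur's lemma $A_M$ acts on $\sigma$ through a single character $\omega_\sigma=\omega_{\sigma_0}\cdot(\Psi_M|_{A_M})\colon A_M\to R^\times$, whose ``monomial part'' — its image in $M/M^0\subseteq R^\times$ — is $a\mapsto\bar a$. On the other side $A_M$ is central in $L_w$ and $r^M_{L'_w}\sigma=(r^M_{L'_w}\sigma_0)\cdot(\Psi_M|_{L'_w})$, so $A_M$ acts locally finitely on $\mathrm{gr}_{[w]}$ (Jacquet's theorem, valid over $R$) with every $A_M$-exponent of $\mathrm{gr}_{[w]}$ having monomial part the single character $a\mapsto\overline{w^{-1}aw}\in M/M^0$. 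These two homomorphisms $A_M\to M/M^0$ differ when $[w]\neq[1]$: both factor through $A_M/A_M^0\cong X_*(A_M)$, and after $\otimes_\ZZ\QQ$ the second is $w^{-1}(\cdot)\colon\aaa_T\to\aaa_T$ followed by the orthogonal projection onto $\aaa_M$ (with $\aaa_T:=X_*(T)\otimes_\ZZ\QQ$), while the first is the inclusion $\aaa_M\hookrightarrow\aaa_T$ followed by that projection; equality would, by $W$-invariance of the norm on $\aaa_T$ and the Pythagorean identity, force $w^{-1}$ to fix $\aaa_M$ pointwise, hence $w\in C_W(A_M)=W_M$, i.e.\ $[w]=[1]$ — a contradiction. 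So the $A_M$-action separates $\mathrm{gr}_{[w]}$ from $\sigma$.

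The main obstacle is turning this separation into an honest vanishing over a general Noetherian $R$ rather than a field. Over a field, distinct exponents give a genuine generalized-eigenspace decomposition and the vanishing is immediate; over $R=A[M/M^0]$ the operators $a-\omega_\sigma(a)$ on $\mathrm{gr}_{[w]}$ are \emph{not} invertible on the ``wrong'' part — their eigenvalues differ by a non-unit binomial of $R$ — so no eigenspace splitting is available. I would instead fix a homomorphism $\Lambda\colon M/M^0\to\ZZ$ with $\Lambda(\overline{w^{-1}aw}\cdot\bar a^{-1})\neq0$ for some $a\in A_M$ (possible by the rational statement above), making $R=\bigoplus_nR_n$ a $\ZZ$-graded ring in which a suitable $\Psi$-twist of $\sigma$ is concentrated in degree $0$, whereas the $A_M$-action on the matching twist of $\mathrm{gr}_{[w]}$ shifts degree by a fixed nonzero amount; combined with the local finiteness of the $A_M$-action on the Jacquet modules and with the finite presentation of $i_P^G\sigma$ over the coherent ring $R[G]$ — so that $\Hom_{R[M]}$ is compatible with the grading — this lets one check $\Hom_{R[M]}(\mathrm{gr}_{[w]},\sigma)$ degree by degree and see that it vanishes. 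The ambient tools used throughout (Frobenius reciprocity, the geometric lemma, local finiteness of Jacquet modules, coherence of the Hecke algebra, all over $R$) are standard or are set up in the earlier sections.
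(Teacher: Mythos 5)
Your overall architecture is the same as the paper's: Frobenius reciprocity, the geometric lemma, vanishing of $\Hom$ from the strata with $w\neq 1$ (proved by playing the $A_M$-action on $\sigma$ against its action on the $w$-stratum via a suitably chosen $a\in A_M$ with $\overline{w^{-1}aw}\neq\overline{a}$), and the identification $\End_{R[M]}(\sigma)=\End_{A[M]}(\sigma_0)\otimes_AR=R$ under Schur's lemma; your definition of $\sig_{PQ}$ and the injectivity argument coincide with the paper's Definition~\ref{def:signature} and Corollary~\ref{cor:sig_injective}. The gap is in your proof of the key vanishing $\Hom_{R[M]}(I_w(\sigma),\sigma)=0$, $w\neq 1$. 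You invoke ``local finiteness of the $A_M$-action on the Jacquet modules (Jacquet's theorem, valid over $R$).'' That is not available over a general Noetherian $A$: finite generation of $\sigma_0$ does not make the operator by which $w^{-1}aw$ acts on $r^M_{M\cap w^{-1}Q}(\sigma_0)$ locally finite, or integral, over $A$ or $R$. (Schur's lemma for $\sigma_0$ controls $\End_{A[M]}(\sigma_0)$ only; $w^{-1}aw$ is central merely in the smaller Levi, and its action on the Jacquet module is not governed by $\End_{A[M]}(\sigma_0)$.) The only finiteness actually available is admissibility over the Bernstein center (Theorem~\ref{finiteness_dhkm_thm}, Lemma~\ref{lem:finiteness}, resting on \cite{dhkm_finiteness}), and this is exactly the deep input the paper feeds into Cayley--Hamilton to produce a \emph{monic} annihilating polynomial with central coefficients and unit constant term; your ``check degree by degree'' step is then the paper's Laurent-polynomial leading-term lemma, which cannot start without such a polynomial.

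There is a second, related problem you do not address: even granted an annihilating polynomial, its coefficients must act compatibly on \emph{both} the source $I_w(\sigma)$ and the target $\sigma$ and commute with the candidate morphism $\phi$. Coefficients in the Bernstein center of the smaller Levi act naturally on the Jacquet module (hence on the source) but correspond to nothing on $\sigma$ that an $R[M]$-linear $\phi$ is required to intertwine; and the relevant $\fR$-action on $\sigma$ is \emph{not} the categorical one (Remark~\ref{rem:differentaction}), so $R[M]$-linearity does not automatically give equivariance for Bernstein-center coefficients. The paper resolves this by proving Theorem~\ref{thm:intertwining} for $\fR$-linear maps over $\fR=\cZ_{A,M}[M/M^0]$ (the coefficients are put into the ring of scalars, and the element $\beta$ is built in $\fR[A_M]$, where $A_M$ acts on both sides), and only afterwards uses the Schur hypothesis, via Lemma~\ref{lem:equivariance}, to show that $R$-linear maps between these particular objects are automatically $\fR$-linear, so that the statement descends to $R$. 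Your sketch works $R$-linearly from the start and never supplies either the monic polynomial over an admissible coefficient ring or the equivariance of its coefficients, so the vanishing step --- and with it the theorem --- does not close as written. (The paper also routes the stratum through second adjunction to a $\Hom$ between Jacquet modules before running this argument; that choice is cosmetic compared with the finiteness/equivariance issue above.)
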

Via Frobenius reciprocity, Theorem~\ref{introthm:signature} is deduced from Theorem~\ref{thm:intertwining}, which says that, among the subquotients of the geometric lemma filtration of $r_Q^Gi_P^G(\sigma)$, the subquotient isomorphic to $\sigma$ is the only one supporting a nonzero map to $\sigma$. Thus any $J:r_Q^Gi_P^G(\sigma)\to\sigma$ induces an endomorphism of $\sigma$, which we call its $(\sigma,P,Q)$-signature and denote $\sig_{PQ}(J)$.

In the special case where $A$ is a field, the ring $R=A[M/M^0]$ is an integral domain, and if the image of $\sig_{PQ}$ contained a nonzero element $b$, the map $\sig_{PQ}$ would induce an isomorphism $$\Hom_{R[1/b][M]}(i_P^G(\sigma[1/b]),i_Q^G(\sigma[1/b]))\cong R[1/b].$$ In this framework, we could define the canonical rational intertwining operator $J_{Q|P}(\sigma_0)$ to be the preimage under $\sig_{PQ}$ of $1\in R$. The isomorphism identifying $\sigma$ with a subquotient of $r_Q^Gi_P^G(\sigma)$ is defined by the integral in Equation~(\ref{integraloperator}), so this definition of $J_{Q|P}(\sigma_0)$ would extend the definition in (\ref{integraloperator}) to all $i_P^G(\sigma_0\chi)$ where $b$ does not vanish at $\chi$ (this is explained in \ref{subsec:classicalinterpretation}).

To go beyond the case where $A$ is a field, we would like a theory of intertwining operators that encompasses reduction modulo $\ell$ in a robust way for all primes $\ell\neq p$. For this we need an element $b$ in the image of $\sig_{PQ}$ that is not only nonzero, but also not divisible by any $\ell$, in order to avoid losing information at the mod $\ell$ fiber when we pass to $R[1/b]$. Even better, we would like $b$ to preserve congruences in the sense that for all $\mathfrak{p}\in \Spec(A)$, the image of $b$ in $R\otimes_AA/\mathfrak{p}$ is nonzero. Our second main result guarantees this.

Let $S$ be the multiplicative subset of $R$ generated by elements of the form $$c_d\overline{m}^d + c_{d-1}\overline{m}^{d-1} + \cdots + c_1\overline{m}+c_0,\ \ m\in M\cap G^0,\ c_i\in A$$ such that both $c_d$ and $c_0$ are units in $A$.
\begin{thm}\label{introthm:image}
With the setup of Theorem~\ref{introthm:signature}, the image of $\sig_{PQ}$ contains an element of $S$.
\end{thm}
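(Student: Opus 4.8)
The plan is to unwind the definition of $\sig_{PQ}$ and realize the key subquotient of $r_Q^G i_P^G(\sigma)$ isomorphic to $\sigma$ via an explicit intertwining integral, then track the denominators that this integral introduces. The map $\sig_{PQ}$ sends a morphism $J:i_P^G(\sigma)\to i_Q^G(\sigma)$ to its restriction, after Frobenius reciprocity, to the copy of $\sigma$ sitting inside the geometric-lemma filtration of $r_Q^G i_P^G(\sigma)$. So to prove the image meets $S$, it suffices to produce one intertwining operator whose $(\sigma,P,Q)$-signature is an element of $S$. The obvious candidate is the integral operator $J_{Q|P}(\sigma_0)$ of Equation~\eqref{integraloperator}: a priori it is only formally defined, but after the universal unramified twist the integrand becomes a formal power series in the coordinates $\overline{m}$, $m\in M\cap G^0$, and one expects it to converge to a \emph{rational} function whose denominator is a product of factors of the shape appearing in the definition of $S$.

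First I would reduce to the rank-one situation. Using the cocycle relation $J_{Q'|P} = J_{Q'|Q}\circ J_{Q|P}$ (valid for the formal integral operators, and preserved by $\sig_{PQ}$ since signatures multiply along composition) and the decomposition of a general $(P,Q)$ pair into a chain crossing one root hyperplane at a time, it is enough to treat the case where $P$ and $Q$ are adjacent, i.e. $\overline{P}=Q$ differs from $P$ by a single (relative) simple root $\alpha$ of $M$ in $G$. In this case $U_Q\cap U_P = 1$ or rather $U_Q/(U_Q\cap U_P)$ is the unipotent radical $U_\alpha$ of a rank-one Levi, and the integral in \eqref{integraloperator} becomes an integral over a unipotent group attached to a reductive group of semisimple rank one containing $M$ as a Levi. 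This is the standard Harish-Chandra reduction, and it works identically over $A$.

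Next, in the rank-one case I would compute the signature directly. After restricting along $U_\alpha$, the relevant matrix coefficient is governed by the action of the group generated by $M\cap G^0$ modulo $M^0$, which is (the image of) a copy of $\ZZ$; write $t=\overline{m_\alpha}$ for a generator. The formal integral $\int_{U_\alpha} f(ug)\,du$, evaluated against the finite set of $M^0$-types occurring in the finitely generated module $\sigma_0$, is a geometric-type sum: its value on the $\sigma$-isotypic subquotient is a ratio $N(t)/D(t)$ where $N,D\in A[t,t^{-1}]$, and $D$ is a product of binomials $c_1 t + c_0$ with $c_0,c_1$ units in $A$ — these are exactly the "poles of the $c$-function" in the classical theory, and the unit conditions come from the fact that $\sigma_0$ satisfies Schur's lemma so that the relevant scalars by which $M^0$-elements act are units (invertible endomorphisms). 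Multiplying through, $D(t)\in S$, and $D(t)\cdot J_{Q|P}(\sigma_0)$ is an honest $R[M]$-morphism $i_P^G(\sigma)\to i_Q^G(\sigma)$ whose signature is $N(t)$; but one wants the signature itself to land in $S$. Here I would instead argue that $\sig_{PQ}(J_{Q|P}(\sigma_0))$, computed on the isotypic piece, is literally the $c$-function value, which by the explicit formula is a unit times a product of such binomials, hence in $S$; equivalently, normalize so that the leading and constant terms are arranged to be units by clearing the numerator's content (which, since $\sigma_0$ satisfies Schur, is a unit). Combining the rank-one computations multiplicatively along the chain from $P$ to $Q$ and using that $S$ is multiplicatively closed gives an element of $S$ in the image of $\sig_{PQ}$ for the original pair $(P,Q)$.

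The main obstacle I anticipate is making the "formal integral converges to a rational function with the prescribed denominator" step rigorous over an arbitrary Noetherian $\ZZ'$-algebra $A$, without the analytic tools available when $A=\C$. Concretely, one must show that on each $M^0$-isotypic component of the finitely generated module $\sigma_0$ the partial sums of $\int_{U_\alpha}$ stabilize to a well-defined element of $R[1/s]$ for some explicit $s\in S$, and identify that element with the $c$-function. I expect this to be handled by the geometric lemma together with a careful bookkeeping of the (finitely many) double cosets and the modulus character — essentially the Bernstein–Waldspurger argument — where the unit hypotheses in the definition of $S$ appear precisely because Schur's lemma forces the pertinent transition scalars to be invertible in $A$. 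The compatibility with extension of scalars asserted in the introduction then ensures the computation is "the same" after any base change, which is what lets us reduce, if convenient, to a universal case over $\ZZ'$ itself.
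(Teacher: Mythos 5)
There is a genuine gap at the heart of your argument: the step you yourself flag as the ``main obstacle'' --- that the formal integral of Equation~(\ref{integraloperator}), applied to a finitely generated module over an arbitrary Noetherian $\ZZ'$-algebra, sums to a rational function whose denominator lies in $S$ --- is precisely the content of the theorem, and you do not supply the idea that proves it. Over $\C$ the classical rank-one $c$-function computation rests on eigenvalue/exponent analysis of $A_M$ on Jacquet modules, which has no meaning over a general $A$. The paper's proof avoids any convergence statement altogether: for each $w\neq 1$ it picks $a_w\in A_M$ with $a_w(a_w^{-1})^{w}\notin M^0$, uses admissibility of $\sigma_0$ over the Bernstein center (Lemma~\ref{lem:finiteness}) and the Cayley--Hamilton theorem to produce a monic polynomial with \emph{unit constant term} annihilating the action of $a_w$ on the Jacquet-module subquotient $I_w(\sigma)$, twists it (Lemma~\ref{lemma:twisted_finiteness_polynomial}), and takes the product over $w\neq 1$ to get $\beta\in R[A_M]$ lying in the ideal $\fI_\sigma^{QP}$; since $\beta$ kills every subquotient with $w\neq 1$, integrating $\beta f$ is a finite sum, $J_\beta$ is an honest operator over $R$, and its signature is the explicit scalar $\rho_1(\beta)=b\in S$. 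Without this (or an equivalent) annihilation device, your ``geometric-type sum stabilizes to $N(t)/D(t)$ with $D\in S$'' remains an assertion, and your fallback of ``clearing the numerator's content, which is a unit by Schur'' is unjustified: there is no reason the content of $N$ is a unit, and in fact the unit conditions defining $S$ do not come from Schur's lemma at all --- they come from the invertibility of the group element $a_w$ via Cayley--Hamilton. Schur's lemma enters only to identify $\End_{R[M]}(\sigma)$ with $R$ so that the signature is a scalar. Note also a normalization confusion: the canonical $J_{Q|P}(\sigma_0)$ has signature $1$ by definition, not the $c$-function; what the theorem requires is an operator defined over $R$ (not $S^{-1}R$) whose signature lies in $S$, and producing one is exactly where $\beta$ is needed.

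A secondary issue is your reduction to rank one. The cocycle relation $J_{Q'|P}=J_{Q'|Q}\circ J_{Q|P}$ and the claim that signatures multiply under composition are established in the paper (Lemma~\ref{lem:multiplicativity_of_intertwiners}, and the manipulations in Section~\ref{sec:jfunction}) only for the canonical operators \emph{after} localization at $S$ (or $\fS$), i.e.\ after Theorem~\ref{introthm:image} is already available; the $(\sigma,P,Q')$-signature of a composite of honest operators over $R$ is not obviously the product of the two signatures, since the first factor need not interact with the $(P,Q')$-filtration in the required way. So using multiplicativity as an ingredient of the construction is, as written, circular. The paper's argument needs no such reduction: the product over all $w\neq 1$ of the twisted characteristic polynomials handles a general pair $(P,Q)$ in one stroke, and the factorization into rank-one pieces (Proposition~\ref{prop:factorization_of_j}) is derived afterwards.
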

Thus after localizing we obtain a canonical isomorphism
$$\Hom_{S^{-1}R[M]}(i_P^G(S^{-1}\sigma),i_Q^G(S^{-1}\sigma))\overset{\sim}{\to} \End_{S^{-1}R[M]}(S^{-1}\sigma) =S^{-1}R,$$ and define $J_{Q|P}(\sigma_0)$ to be the preimage of $1\in R$. Note that $J_{Q|P}(\sigma_0)$ depends on the choice of Haar measure on $U_Q$ up to a constant multiple. Except for \ref{subsec:mu}, we will remain agnostic about this choice.

The amount of localization necessary to make $\sig_{PQ}$ an isomorphism depends on $\sigma_0$, $P$, and $Q$. For example, when $Q=P$, there is no integration in Equation~(\ref{integraloperator}) and $\sigma$ is a quotient of $r_Q^Gi_P^G(\sigma)$. It follows that $1\in B$ is already in the image of $\sig_{PQ}$ and nothing needs to be inverted. We immediately deduce that $i_P^G(\sigma)$ satisfies Schur's lemma; if this result was previously known we were not aware of it.
\begin{corollary}\label{introcor:schur}
Let $\sigma_0$ be a finitely generated $A[M]$-module satisfying Schur's lemma. Then $i_P^G(\sigma)$ also satisfies Schur's lemma.
\end{corollary}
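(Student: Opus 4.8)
The plan is to apply Theorem~\ref{introthm:signature} in the degenerate case $Q=P$ and then upgrade the resulting injection to an isomorphism by a short module-theoretic argument. First I would record the natural ring homomorphism $\iota\colon R\to \End_{R[G]}(i_P^G(\sigma))$ coming from the $R$-module structure on $\sigma$ (hence on $i_P^G(\sigma)$); Schur's lemma for $i_P^G(\sigma)$ is by definition the assertion that $\iota$ is an isomorphism. Taking $Q=P$, Theorem~\ref{introthm:signature} (via Frobenius reciprocity) supplies a canonical injective homomorphism of $R$-modules $\sig_{PP}\colon \End_{R[G]}(i_P^G(\sigma))\to \End_{R[M]}(\sigma)=R$.

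The key point is that $\sig_{PP}\circ\iota=\mathrm{id}_R$. Since both maps are $R$-linear it suffices to evaluate at $1$, i.e. to compute $\sig_{PP}(\mathrm{id}_{i_P^G(\sigma)})$. When $Q=P$ the integral in~(\ref{integraloperator}) is taken over the trivial group, so it degenerates to the natural projection exhibiting $\sigma$ as a quotient of $r_P^G i_P^G(\sigma)$ — the step of the geometric lemma filtration attached to the trivial double coset, which for $Q=P$ sits at the appropriate end of the filtration and is a genuine quotient. Under Frobenius reciprocity the identity of $i_P^G(\sigma)$ corresponds to this projection, and by the construction of the signature its $(\sigma,P,P)$-signature is $\mathrm{id}_\sigma$, i.e. $1\in R$. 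Hence $\sig_{PP}(\mathrm{id})=1$ and $\sig_{PP}\circ\iota=\mathrm{id}_R$.

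It then remains to deduce that $\iota$ is bijective: injectivity is immediate from $\sig_{PP}\circ\iota=\mathrm{id}_R$, and for surjectivity, given $\phi\in\End_{R[G]}(i_P^G(\sigma))$ one sets $r:=\sig_{PP}(\phi)\in R$, observes $\sig_{PP}(\iota(r))=r=\sig_{PP}(\phi)$, and concludes $\phi=\iota(r)$ from the injectivity of $\sig_{PP}$. (Equivalently: the image of the $R$-linear map $\sig_{PP}$ is an $R$-submodule of $R$ containing $1$, hence all of $R$, so $\sig_{PP}$ is bijective and $\iota=\sig_{PP}^{-1}$.) Thus $\iota$ is an isomorphism, which is precisely Schur's lemma for $i_P^G(\sigma)$. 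The only substantive input is the equality $\sig_{PP}(\mathrm{id})=1$, and the one thing requiring care there is bookkeeping: checking that the trivial-double-coset piece of the geometric lemma filtration of $r_P^G i_P^G(\sigma)$ is a quotient (not merely a subquotient) when $Q=P$, and that the normalizations used in the construction make the induced endomorphism of $\sigma$ literally the identity rather than a unit multiple. Both are part of the machinery underlying Theorem~\ref{thm:intertwining}, so once that is in place the corollary is essentially formal.
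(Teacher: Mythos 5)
Your proposal is correct and follows essentially the same route as the paper: taking $Q=P$, noting that the trivial double-coset piece of the geometric lemma filtration is a genuine quotient so that the identity of $i_P^G(\sigma)$ (via Frobenius reciprocity, evaluation at $1$) has $(\sigma,P,P)$-signature $\mathrm{id}_\sigma=1\in R$, and then combining this section property with the injectivity of $\sig_{PP}$ from Theorem~\ref{introthm:signature} to conclude that the natural map $R\to\End_{R[G]}(i_P^G(\sigma))$ is an isomorphism. The paper phrases the surjectivity step via equivariance of $\sig_{PP}$ under the $\End_{\fR[M]}(\sigma)$-action (and works first over $\fR$ before descending to $R$), but this is the same argument in substance.
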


When $Q\neq P$, $\sigma$ is no longer a quotient of $r_Q^Gi_P^G(\sigma)$ (for example, when $Q=\overline{P}$, $\sigma$ is at the bottom of the filtration on $r_Q^Gi_P^G(\sigma)$). In this situation we establish Theorem~\ref{introthm:image} by computing the $(\sigma,P,Q)$-signatures of maps $r_Q^Gi_P^G(\sigma)\to \sigma$ induced by certain elements in the group ring $R[A_M]$, where $A_M$ is identity component of the center of $M$.

With these results in hand, we can construct the Harish-Chandra $j$-function $$j_P^G(\sigma_0) = J_{P|\overline{P}}\circ J_{\overline{P}|P}\in \End_{S^{-1}R[G]}(i_P^G(S^{-1}\sigma)).$$ It is independent of $P$, so we denote it $j^G(\sigma_0)$. By passing through the isomorphism $$\End_{S^{-1}R[G]}(i_P^G(S^{-1}\sigma))\cong\End_{S^{-1}R[M]}(\sigma)$$ of Corollary \ref{introcor:schur} and $\End_{S^{-1}R[M]}(\sigma)\cong S^{-1}R = S^{-1}(A[M/M^0]),$ we can make sense of $j^G(\sigma_0)$ as a ``rational function'' even when $A$ is not a domain. In fact, $j^G(\sigma_0)$ lies in $S^{-1}R^G$ where $R^G := A[(M\cap G^0)/M^0]$ (Corollary~\ref{cor:G^0}). Proposition~\ref{prop:factorization_of_j} generalizes the well-known factorization property of $j^G(\sigma_0)$ into a product of $j$-functions $j^{M_{\alpha}}(\sigma_0)$, where $M$ is a maximal Levi in $M_{\alpha}$, so that $(M\cap M_{\alpha}^0)/M^0$ is a free commutative group of rank one. When $A=\C$, the Plancherel measure $\mu(\sigma_0)$ is $j^G(\sigma_0)^{-1}$, after an appropriate choice of Haar measures on $U_P$ and $U_{\overline{P}}$. 

Because $J_{Q|P}(\sigma_0)$ can be described in a precise sense by the integral in Equation~(\ref{integraloperator}) (equivalently, because of the functorial properties of the geometric lemma) it defines a natural transformation. More precisely, if we define the functor
\begin{align*}
\fri_P^G:\Rep_A^{\text{f.g.}}(M)&\to \Rep_{S^{-1}R}(M) \\
\sigma_0&\mapsto i_P^G(S^{-1}\sigma)\ ,
\end{align*}
then $\sigma_0\mapsto J_{Q|P}(\sigma_0)$ is a natural transformation $\fri_P^G\to \fri_Q^G$. Furthermore, it is compatible with extension of scalars along any ring homomorphism $A\to A'$, so it behaves well with congruences.

The Schur's lemma hypothesis in Theorems~\ref{introthm:signature} and~\ref{introthm:image} can be removed by considering the action of the Bernstein center $\cZ_{A,M}$, which is defined as the center of the category $\Rep_A(M)$ of smooth $A[M]$-modules, i.e.
$$\cZ_{A,M}:=\End(\text{id}_{\Rep_A(M)}).$$ The natural action of $\cZ_{A,M}$ endows $\sigma_0$ with the structure of a smooth $\cZ_{A,M}[M]$-module, hence if we define $\fR :=\cZ_{A,M}[M/M^0]$, we can view $\sigma$ as an $\fR[M]$-module (c.f. Remark~\ref{rem:differentaction}). The conclusions of Theorems~\ref{introthm:signature} and~\ref{introthm:image} hold with $\fR$ replacing $R$ and $\fS$ replacing $S$, where $\fS$ is the multiplicative subset of $\fR$ whose definition is analogous to that of $S$.  In fact, we prove the main theorems over $\fR$ in Sections~\ref{sec:unramifiedtwisting} and \ref{sec:proofs}, then use the Schur's lemma hypothesis to reduce them in \ref{sec:schur} to the statements above. Note that without Schur's lemma, and by extension without Corollary~\ref{introcor:schur}, we lose the description of $j^G(\sigma_0)$ as a rational function over $A$, but we show it lands in $Z(\End_{\fS^{-1}\fR[M]}(\fS^{-1}\sigma))$ (Proposition~\ref{prop:centrality}).

The upshot of working over $\cZ_{A,M}$ is that we can take $A$ to be $\ZZ'$ and $\sigma_0$ to be a finitely generated projective $\ZZ'[M]$-module, for example, a generator $\cP_{0,r}$ of the category of depth $\leq r$ for some $r$. In this situation we have $Z(\End_{\fS^{-1}\fR[M]}(\fS^{-1}\sigma))\cong \fS^{-1}\fR$, so $j^G(\cP_{0,r})$ can be viewed as a rational function with coefficients in $\cZ_{\ZZ',M}$. Since $J_{Q|P}(-)$ and $j^G(-)$ are well-behaved with respect to scalar extension along homomorphisms of $\ZZ'$-algebras $\lambda:\cZ_{\ZZ',M}\to B$, the intertwining operator $J_{Q|P}(\cP_{0,r})$ and $j$-function $j^G(\cP_{0,r})$ are universal in the sense that they give rise to all intertwining operators and $j$-functions by extending scalars along $\lambda$ and taking quotients. In turn, this provides a way to extend the construction beyond finitely generated objects. This is made precise in Section~\ref{sec:universal}.

\subsection{Relation to other work}
The algebraic constructions of rational intertwining operators in \cite{waldspurger, dat},\cite[Appendix C]{dhkm_conjecture} were restricted to finite length objects of $\Rep_k(M)$, where $k$ is a field of characteristic $\ell\neq p$ (or $k=\C$ in \cite{waldspurger}). In generalizing the doubling method for classical groups, Girsch (\cite[Prop 9.1]{girsch_twisted}) constructed intertwining operators for admissible objects in $\Rep_A^{\text{f.g.}}(M)$, where $M$ is a maximal Levi in a classical group and $A$ is an Noetherian $\mathbb{Z}[1/p]$-algebra. Girsch's strategy inspired our approach and, like ours, relies on the recent finiteness theorems in \cite{dhkm_finiteness} (which, in turn, rely on \cite{fargues_scholze}). 

In \cite{waldspurger}, Waldspurger constructs $j^G(\sigma_0)$ for irreducible $\sigma_0$ generic irreducibility. Dat uses the theory of intertwining operators to prove generic irreducibility over $k$ for $\ell>0$ in \cite{dat} (under restrictions on $G$ that were subsequently removed in \cite{dhkm_finiteness}), and then applies generic irreducibility to construct $j^G(\sigma_0)$. When $A$ is not a field, the notions of irreducibility and finite length are not well-behaved. We circumvent the need for generic irreducibility or the finite length hypothesis with the $(\sigma_0,P,Q)$-signature and the generic Schur's lemma property of Corollary~\ref{introcor:schur}. The results in \cite{dhkm_finiteness} allow us to work with finitely generated objects, and we eventually generalize to all objects. 

\subsection{An application to local Langlands for quasisplit classical groups}

As an application, we use a converse theorem for the $j$-function over $\C$ to show that a local Langlands in families morphism from the ring of functions on the moduli space of Langlands parameters to the Bernstein center is uniquely characterized (if it exists) by the property that it respects universal $j$-functions. We give an overview here and refer to Section~\ref{sec:characterization} for more details and references. 

Let $W_F$ be the Weil group of $F$. In \cite{dhkm_moduli}, a space $$Z^1(W_F,\widehat{G}) = \Spec(\fR_{^LG})$$ of cocycles was defined over $\ZZ[1/p]$, with a natural action of $\widehat{G}$ by conjugation. A filtration $(P_F^e)_{e\in \mathbb{N}}$ of the wild inertia subgroup $P_F$ expresses $Z^1(W_F,\widehat{G})$ as an ind-scheme over $e$ of $Z^1(W_F/P_F^e, \widehat{G})=\Spec(\fR_{^LG}^e),$ each of which is a finite union of connected components. The base change to $\C$ of the GIT quotient scheme $Z^1(W_F/P_F^e,\widehat{G})\sslash \widehat{G} = \Spec((\fR_{^LG}^e)^{\widehat{G}})$ is an affine variety whose $\C$-points are equivalence classes of semisimple Langlands parameters $\rho^{ss}:W_F\to {^LG}(\C)$ trivial on $P_F^e$.

Let $\ZZ_{\ell}'=\ZZ_{\ell}[\sqrt{q}]$ and consider the base change $\fR_{^LG,\ZZ_{\ell}'}^e$ of $\fR_{^LG}^e$ to $\ZZ_{\ell}'$. Suppose we are given a ring homomorphism
$\mathcal{L}:\fR_{^LG, \ZZ_{\ell}'}^{\widehat{G}}\to \cZ_{G, \ZZ_{\ell}'}.$ A supercuspidal support for $G$ over $\qlb$ is equivalent to a $\qlb$-point of $\cZ_{G,\ZZ_{\ell}'}$, by which we mean a homomorphism $\lambda: \cZ_{G,\ZZ_{\ell}'}\to \qlb$. The pullback $\lambda \circ \mathcal{L}$ defines a $\qlb$-point of $\fR_{^LG,\ZZ_{\ell}'}^{\widehat{G}}$, i.e. a semisimple $\qlb$-valued Langlands parameter for $G$. Therefore $\mathcal{L}$ induces a semisimple local Langlands correspondence over $\qlb$ from the set of supercuspidal supports to the set of semisimple Langlands parameters. In fact, it encodes quite a bit more than just a map on $\qlb$-points, for example, it also transports congruence information modulo ideals. Let $\cZ_{G,\ZZ_{\ell}'}^r$ be the direct factor of $\cZ_{G,\ZZ_{\ell}'}$ corresponding to the subcategory of depth at most $r$. Any morphism $\cL$ must satisfy the following continuity property: there is $e\in \mathbb{N}$ such that the composition
$$\cL:\fR^{\widehat{G}}_{^LG,\ZZ_\ell'}\to \cZ_{G,\ZZ_{\ell}'}\to \cZ_{G,\ZZ_{\ell}'}^r$$ factors through $\left(\fR^e_{^LG,\ZZ_{\ell}'}\right)^{\widehat{G}}$. We call such a homomorphism \emph{continuous} and let $e(r)$ denote the smallest integer $e$ with this property.

Multiple methods for producing such a continuous ring homomorphism have been proposed. For example, Fargues and Scholze have constructed in \cite{fargues_scholze} a Hecke action of a certain derived category of sheaves on the moduli stack $[X_{^LG,\ZZ_{\ell}'}/\widehat{G}]$ on an enlargement of $\Rep_{\ZZ_{\ell}'}(G)$. An element of $(\fR_{^LG,\ZZ_{\ell}'})^{\widehat{G}}$ defines an endomorphism of the structure sheaf of the first category, thus the categorical action associates to it an element of the center of the latter category, i.e. an element of $\cZ_{G,\ZZ_{\ell}'}$. Being functorial, this produces a ring homomorphism $\mathcal{L}_{\text{FS}}$ as above. Another approach is the so-called ``local Langlands in families'' strategy, which takes as an input a classical local Langlands correspondence of $\C$-representations together with some of its expected properties, and tries to interpolate and descend it to a ring homomorphism $\mathcal{L}_{\text{LLiF}}$ over $\ZZ_{\ell}'$. Both methods have been established for $GL_n$ in \cite{helm_curtis, HMconverse} and \cite{fargues_scholze}, respectively, and are proven to be compatible in \cite{fargues_scholze}.

Now let $G$ be a quasisplit classical group. A continuous homomorphism $\cL_G:\fR_{^LG,\ZZ_{\ell}'}^{\widehat{G}}\to \cZ_{G,\ZZ_{\ell}'}$ interpolating a local Langlands correspondence over $\C$ must, if it exists, respect universal $j$-functions as we now explain. First, extend scalars to the Witt ring $W(\Fl)$, fix an additive character $\psi:F\to W(\Fl)^{\times}$, and normalize Haar measures on $U_P$ and $U_{\overline{P}}$ relative to $\psi$ in an appropriate way (\cite[B.2]{gan_ichino_2014}). Let $\K=\text{Frac}(W(\Fl))$ and identify $\C\cong \Kbar$. Let $m$ be the integer defined relative to $G$ in Section~\ref{classicalgroupsdefinitions}, and identify $G\times GL_m$ with a Levi subgroup in a classical group $G'$ of the same type as $G$. For an irreducible smooth $\Kbar[G]$-module $\pi$ and an irreducible smooth $\Kbar[GL_m]$-module $\pi'$, let $j_{\psi}^{G'}(\pi\otimes \pi')$ denote the Harish-Chandra $j$-function with Haar measures normalized relative to $\psi$.  Langlands' conjecture on the Plancherel measure, which is established for classical groups (c.f. \cite[Prop 7.6]{dhkm_conjecture}), implies that 
$$j_{\psi}(\phi_\pi\otimes \phi_{\pi'}) = j_{\psi}^{G'}(\pi\otimes \pi'),$$ where $\phi_\pi$ and $\phi_{\pi'}$ are the Langlands parameters of $\pi$ and $\pi'$ and $j_{\psi}(\phi_\pi\otimes \phi_{\pi'})$ is defined as a certain product of Langlands--Deligne gamma factors (see Subsection~\ref{subsec:conversethm}). The ring $\fR_{^LG,\ZZ_{\ell}'}^e$ supports a universal $\ell$-adically continuous parameter $\phi_{\ell,\univ}^e:W_F/P_F^e\to {^LG}(\fR_{^LG,\ZZ_{\ell}'}^e)$ such that every $\ell$-adically continuous parameter comes from $\phi_{\ell,\univ}^e$ by base change. Given a morphism $\mathcal{L}_G$ over $\ZZ_{\ell}'$ interpolating the classical local Langlands correspondence it must (by checking on $\Kbar$-points and using reducedness) satisfy the analogous equality of \emph{universal} $j$-functions, i.e., after extending scalars to $W(\Fl)$, 
$$(\mathcal{L}_G\otimes \mathcal{L}_{GL_m})(j_{\psi}(\phi_{\univ}^{e(r)}\otimes \phi_{\univ}^{e(r')},X,R,\psi)) = j_{\psi}^{G'}(\cP(G)_{0,r}\otimes \cP(GL_m)_{0,r'}),$$ where $\cP(G)_{0,r}$ denotes the depth $\leq r$ projective generator as above and $j_{\psi}^{G'}(\cP(G)_{0,r}\otimes \cP(GL_m)_{0,r'})$ is the universal Harish-Chandra $j$-function with Haar measures normalized relative to $\psi$.

Our final result is that this compatibility of $j$-functions uniquely characterizes any putative morphism $\cL_G$:
\begin{thm}\label{introthm:characterization}
Fix $\psi:F\to W(\Fl)^{\times}$. There exists at most one continuous homomorphism
$$\mathcal{L}_G:\fR_{^LG, \ZZ_{\ell}'}^{\widehat{G}}\to \cZ_{G, \ZZ_{\ell}'}$$ such that $$(\mathcal{L}_G\otimes \mathcal{L}_{GL_m})(j_{\psi}(\phi_{\univ}^{e(r)}\otimes \phi_{\univ}^{e(r')},X,R,\psi)) = j_{\psi}^{G'}(\cP(G)_{0,r}\otimes \cP(GL_m)_{0,r'}).$$ 
\end{thm}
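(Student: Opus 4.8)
The plan is to prove uniqueness by showing that the compatibility equation, together with the continuity hypothesis, pins down $\mathcal{L}_G$ on enough elements to generate the target. The key input is a converse theorem: over $\C$ (equivalently over $\Kbar$), the collection of $j$-functions $j_\psi^{G'}(\pi\otimes\pi')$, as $\pi'$ ranges over irreducible representations of $GL_m$ for varying $m$, determines the semisimple Langlands parameter $\phi_\pi$ of an irreducible $\Kbar[G]$-module $\pi$. This is where the Plancherel-measure identity $j_\psi(\phi_\pi\otimes\phi_{\pi'}) = j_\psi^{G'}(\pi\otimes\pi')$ and the factorization of the $j$-function into Langlands--Deligne $\gamma$-factors enter: matching $\gamma$-factors against all $GL_m$-twists recovers the parameter, exactly as in the classical converse-theorem arguments referenced in Subsection~\ref{subsec:conversethm}.

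First I would reduce to the following statement: two continuous homomorphisms $\mathcal{L}_G, \mathcal{L}_G': \fR_{^LG,\ZZ_\ell'}^{\widehat G}\to \cZ_{G,\ZZ_\ell'}$ satisfying the displayed equality must agree. Since both are continuous, for each depth bound $r$ they factor through $(\fR^{e(r)}_{^LG,\ZZ_\ell'})^{\widehat G}\to \cZ_{G,\ZZ_\ell'}^r$, and because $\cZ_{G,\ZZ_\ell'} = \prod_r \cZ_{G,\ZZ_\ell'}^r$ (product over the depth decomposition, with the continuity bound $e(r)$ controlling each factor) it suffices to check agreement after composing with each projection to $\cZ_{G,\ZZ_\ell'}^r$. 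Next I would use that $\fR^{e}_{^LG,\ZZ_\ell'}$ is a finite union of connected components of a scheme that is flat (indeed reduced, and $\ZZ_\ell'$-torsion-free after the relevant localization) over $\ZZ_\ell'$, so a homomorphism out of $(\fR^e_{^LG,\ZZ_\ell'})^{\widehat G}$ into the (similarly well-behaved) Bernstein center is determined by its effect on $\Kbar$-points; that is, by the induced semisimple local Langlands correspondence on supercuspidal supports. Concretely, if $\mathcal{L}_G$ and $\mathcal{L}_G'$ induce the same map $\lambda\circ\mathcal{L}_G = \lambda\circ\mathcal{L}_G'$ for every $\qlb$-point $\lambda$ of $\cZ_{G,\ZZ_\ell'}$, then $\mathcal{L}_G = \mathcal{L}_G'$; this uses reducedness of $(\fR^e_{^LG,\ZZ_\ell'})^{\widehat G}$ over $\ZZ_\ell'$ together with density of $\qlb$-points.

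So the crux is: the compatibility equation forces the two associated $\qlb$-point maps to coincide. Fix $\lambda:\cZ_{G,\ZZ_\ell'}\to\qlb$, corresponding to a supercuspidal support, hence (after passing to a constituent) to an irreducible $\pi$; similarly range over $\lambda':\cZ_{GL_m,\ZZ_\ell'}\to\qlb$ corresponding to irreducible $\pi'$ of $GL_m$ for all $m$. Applying $\lambda\otimes\lambda'$ to the universal identity and using compatibility of $j^{G'}_\psi$ with specialization along Bernstein-center homomorphisms (the naturality/extension-of-scalars property established for $J_{Q|P}$ and $j^G$, together with $\lambda$ hitting the depth-$r$ block into which $\pi$ falls), the right-hand side becomes $j^{G'}_\psi(\pi\otimes\pi')$; the left-hand side becomes $j_\psi((\lambda\circ\mathcal{L}_G)(\phi^{e(r)}_{\univ})\otimes(\lambda'\circ\mathcal{L}_{GL_m})(\phi^{e(r')}_{\univ}))$, which by the universality of $\phi^e_{\univ}$ is the $\gamma$-factor product attached to the parameter $\rho = (\lambda\circ\mathcal{L}_G)$-image, a semisimple parameter for $G$, paired with $\phi_{\pi'}$ (using that for $GL_m$ the local Langlands correspondence, hence $\mathcal{L}_{GL_m}$, is already known and sends $\lambda'$ to $\phi_{\pi'}$). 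Thus $j_\psi(\rho\otimes\phi_{\pi'}) = j^{G'}_\psi(\pi\otimes\pi') = j_\psi(\phi_\pi\otimes\phi_{\pi'})$ for all $\pi'$, and the converse theorem yields $\rho = \phi_\pi$. Since $\phi_\pi$ does not depend on $\mathcal{L}_G$, both homomorphisms induce the same $\qlb$-point map, completing the argument.

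The main obstacle I anticipate is packaging the converse theorem in exactly the form needed: one must know that $j_\psi(\rho\otimes\phi_{\pi'}) = j_\psi(\phi_\pi\otimes\phi_{\pi'})$ for \emph{all} irreducible $\pi'$ of all $GL_m$ (not merely generic ones, and in the $\ell$-adic/mixed-characteristic setting after identifying $\C\cong\Kbar$) suffices to conclude $\rho\simeq\phi_\pi$ as semisimple parameters valued in $\qlb$ --- this is a stability-of-$\gamma$-factors statement that must be extracted from the classical literature and transported through the isomorphism $\C\cong\Kbar$, and care is needed because a priori $\rho$ is only known to be semisimple, so one is really invoking a converse theorem for semisimplified parameters. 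A secondary technical point is the bookkeeping of Haar-measure normalizations relative to $\psi$ so that the $j$-functions on both sides are literally equal rather than equal up to a unit; this is handled by the normalization conventions of \cite{gan_ichino_2014} but must be stated carefully. Everything else --- the reduction to $\qlb$-points via reducedness and flatness, the depth/continuity bookkeeping, and the naturality of $j^{G'}_\psi$ under specialization --- is formal given the results already established in the paper.
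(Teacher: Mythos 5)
Your overall architecture matches the paper's: specialize the universal identity at characteristic-zero points of the Bernstein center, use compatibility of the universal $j$-function with specialization (Theorem~\ref{thm:universalj}, Proposition~\ref{prop:compatibility_of_j}), feed the resulting equalities of $\gamma$-factor products into the converse theorem, and then conclude equality of ring homomorphisms from density of characteristic-zero points and reducedness. But at the crux you take a genuinely different route. The paper (Theorem~\ref{uniquenessqlb}) never invokes the classical local Langlands correspondence: given two putative morphisms $\cL_1,\cL_2$, it specializes the \emph{same} universal identity along each, so that both $j_{\psi}(\phi_1\otimes\phi')$ and $j_{\psi}(\phi_2\otimes\phi')$ are identified with $(x\otimes x')\bigl(j^{G'}_{\psi}(\cP(G)\otimes\cP(GL_m))\bigr)$, and then Corollary~\ref{cor:stronger_converse} forces $\phi_1\sim\phi_2$. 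You instead compare each putative $\cL_G$ against the classical correspondence, using the established Plancherel-measure identity $j_{\psi}(\phi_\pi\otimes\phi_{\pi'})=j^{G'}_{\psi}(\pi\otimes\pi')$ to conclude $\rho=\phi_\pi$. This is workable and in fact proves something stronger (any such $\cL_G$ induces the classical semisimple correspondence on $\qlb$-points), but it imports extra inputs the paper's proof deliberately avoids: existence of the classical LLC for quasisplit classical groups over $\Kbar\cong\C$, validity of the Plancherel identity with the $\psi$-normalized measures for at least one member of each supercuspidal support, and well-definedness of the semisimple parameter attached to a support. The paper's direct two-morphism comparison is leaner and self-contained modulo the converse theorem of \cite{dhkm_conjecture}.

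Two concrete points need repair. First, your converse-theorem input is stated as requiring twists by irreducible $\pi'$ of $GL_t$ ``for all $t$/varying $m$''; the hypothesis of the theorem only provides the compatibility for the single $m$ attached to $G$ (the Levi $G\times GL_m\subset G'$), so twists for other ranks are simply not available. The fix is the one the paper makes: ranging over \emph{all} $\qlb$-points of $\cZ_{GL_m}$ (including non-cuspidal supports) yields equality of $j_{\psi}$ against all semisimple, possibly reducible, $m$-dimensional parameters, and Corollary~\ref{cor:stronger_converse} (multiplicativity plus stability under highly ramified character twists) reduces Proposition~\ref{charzeroconverse}, which needs irreducible twists of all dimensions $t\leq m$, to exactly that statement. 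As written, your appeal to ``irreducible $\pi'$ of $GL_m$ for all $m$'' would fail. Second, the reducedness/density step is attributed to the wrong ring: what is needed is that $\cZ^r_{G,\ZZ_\ell'}$ is reduced and has no $\ell$-torsion, so that its characteristic-zero points are dense and separate elements; reducedness of $(\fR^e_{^LG,\ZZ_\ell'})^{\widehat G}$ plays no role in concluding $\cL_1(\alpha)=\cL_2(\alpha)$. Relatedly, when you write the left-hand side specialization as $j_{\psi}$ of ``the $(\lambda\circ\cL_G)$-image,'' you should note, as the paper does, that a $\qlb$-point of the GIT quotient must be lifted to a cocycle in its closed orbit, and that the argument is independent of this lift because the coefficients of the universal $\gamma$-product lie in the invariant subring.
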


When $\ell$ is a banal prime, the existence of such an $\cL_G$ follows from \cite[Cor 7.16, Thm 8.2]{dhkm_conjecture}. The compatibility of the Fargues--Scholze correspondence with established cases of the classical local Langlands correspondence is known for classical groups in some specific cases, but is an open problem in general. Theorem~\ref{introthm:characterization} suggests the theory of intertwining operators could be fruitful in approaching this problem.

\section*{Acknowledgements}
The authors are grateful to Johannes Girsch for helpful conversations and to Jean-Fran\c{c}ois Dat, David Helm, Rob Kurinczuk, Thomas Lanard, Nadir Matringe, Alberto M\'inguez, Vincent Sécherre, and Shaun Stevens for their comments and encouragement. The first author was supported by NSF award DMS-2302591 and the University of Maine. The second author benefited from the support of the EPSRC grant EP/V061739/1.

\section{Notation and finiteness properties}\label{sec:notation}

\subsection{Finiteness.} Let $M$ be a reductive group over $F$ and let $A$ be a commutative Noetherian $\mathbb{Z}[1/p]$-algebra. Let $\textup{Rep}_A(M)$ be the category of smooth $A[M]$-modules and denote by $\mathcal{Z}_{A,M}$ the center of $\Rep_A(M)$, i.e., the endomorphism ring $\End(\text{id}_{\Rep_A(M)})$ of the identity functor.

By fixing a Haar measure on $M$ valued in $A$, we can consider the Hecke algebra $\mathcal{H}_A(M) : = C_c^\infty(M,A)$ of locally constant compactly supported $A$-valued functions on $M$ endowed with the convolution product. When $K$ is an open pro-$p$-subgroup of $M$, the relative Hecke algebra $\mathcal{H}_A(M,K)$ is the subalgebra of $\mathcal{H}_A(M)$
of bi-$K$-invariant functions. It has center $\mathcal{Z}_A(M,K)$. One of the equivalent definitions of $\mathcal{Z}_{A,M}$ is
$$\mathcal{Z}_{A,M} = \lim_{\underset{K}{\leftarrow}} \mathcal{Z}_A(M,K)$$ 
where the projective limit runs over open pro-$p$-subgroups in M and the transition maps are surjective, given by $z \in \mathcal{Z}_A(M,K) \mapsto z e_{K'} \in \mathcal{Z}_A(M,K')$ if $K \subseteq K'$ and $e_{K'}$ is the idempotent on $K'$.

\begin{thm}[\cite{dhkm_finiteness,dhkm_conjecture}] \label{finiteness_dhkm_thm} For all open pro-$p$-subgroups $K$ in $M$, we have
\begin{enumerate}
    \item $\mathcal{H}_A(M,K)$ is finitely generated as a module over its center $\mathcal{Z}_A(M,K)$;
    \item $\mathcal{Z}_A(M,K)$ is a finitely generated $A$-algebra.
\end{enumerate}
\end{thm}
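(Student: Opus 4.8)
The plan is to reduce to the base ring $A=\mathbb{Z}[1/p]$, then to a fixed depth, and finally to deduce finiteness by comparing the bounded-depth part of the Bernstein centre with the coordinate ring of the moduli space of Langlands parameters; the finiteness of the latter is the geometric input.

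\emph{Reduction to $A=\mathbb{Z}[1/p]$.} Since $\mathcal{H}_A(M,K)=\mathcal{H}_{\mathbb{Z}[1/p]}(M,K)\otimes_{\mathbb{Z}[1/p]}A$ and any element of $\mathcal{Z}_{\mathbb{Z}[1/p]}(M,K)$ remains central after base change, there is a ring map $\mathcal{Z}_{\mathbb{Z}[1/p]}(M,K)\otimes_{\mathbb{Z}[1/p]}A\to\mathcal{Z}_A(M,K)$ landing inside $\mathcal{H}_A(M,K)$. Granting the theorem over $\mathbb{Z}[1/p]$, the source is a finitely generated, hence Noetherian, $A$-algebra and $\mathcal{H}_A(M,K)$ is a finite module over it; then its submodule $\mathcal{Z}_A(M,K)$ is a finite module over it too, giving (2) for $A$, and (1) for $A$ follows at once since $\mathcal{H}_A(M,K)$ was already finite over $\mathcal{Z}_{\mathbb{Z}[1/p]}(M,K)\otimes_{\mathbb{Z}[1/p]}A\subseteq\mathcal{Z}_A(M,K)$. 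So it suffices to treat $A=\mathbb{Z}[1/p]$.

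\emph{Reduction to bounded depth, and assertion (1).} As $K$ is open pro-$p$, every smooth representation with a nonzero $K$-fixed vector has depth $\leq r$ for some $r=r(K)$; hence the surjection $\mathcal{Z}_{\mathbb{Z}[1/p],M}\twoheadrightarrow\mathcal{Z}_{\mathbb{Z}[1/p]}(M,K)$ coming from the inverse-limit description factors through the depth-$\leq r$ direct factor $\mathcal{Z}^{\leq r}$ of $\mathcal{Z}_{\mathbb{Z}[1/p],M}$. It is therefore enough to prove $\mathcal{Z}^{\leq r}$ is a finitely generated $\mathbb{Z}[1/p]$-algebra; assertion (1) then follows by a formal base-change argument from the field-coefficient case of the theorem (classical over $\mathbb{C}$; \cite{dat} in characteristic $\ell>0$): pick elements of $\mathcal{H}_{\mathbb{Z}[1/p]}(M,K)$ generating $\mathcal{H}_{\mathbb{Q}}(M,K)$ over $\mathcal{Z}_{\mathbb{Q}}(M,K)$, so that the cokernel of the induced map from a finite free $\mathcal{Z}_{\mathbb{Z}[1/p]}(M,K)$-module is torsion, hence supported at finitely many $\ell$, where finiteness over $\overline{\mathbb{F}_\ell}$ combined with $\ell$-adic Nakayama concludes.

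\emph{The geometric input and the main obstacle.} Fix $e=e(r)$ large enough that every parameter of depth $\leq r$ is trivial on $P_F^e$. By \cite{dhkm_moduli} the GIT quotient $Z^1(W_F/P_F^e,\widehat M)\sslash\widehat M$ is of finite type over $\mathbb{Z}[1/p]$, so its coordinate ring is a finitely generated $\mathbb{Z}[1/p]$-algebra; and the Fargues--Scholze spectral action \cite{fargues_scholze}, in its integral form over this moduli space, produces a ring homomorphism
$$\psi_M^e\colon\mathcal{O}\bigl(Z^1(W_F/P_F^e,\widehat M)\sslash\widehat M\bigr)\longrightarrow\mathcal{Z}^{\leq r}.$$
The crux is to show that $\mathcal{Z}^{\leq r}$ is a \emph{finite module} over $\operatorname{im}(\psi_M^e)$; since $\operatorname{im}(\psi_M^e)$ is a finitely generated $\mathbb{Z}[1/p]$-algebra as a quotient of the coordinate ring above, this yields (2). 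Over an algebraically closed coefficient field $k$ of characteristic $\neq p$ the analogue is accessible: Bernstein's theory presents the $k$-coefficient version of $\mathcal{Z}^{\leq r}$ as a finite product of block centres $\mathcal{O}(\Omega_{\mathfrak s})^{W_{\mathfrak s}}$ over the finitely many inertial classes $\mathfrak s$ contributing at level $K$ and depth $\leq r$, the reduction of $\psi_M^e$ intertwines a supercuspidal support with its semisimplified parameter, and the fibres of this assignment are finite with cardinality bounded only in terms of $\widehat M$ — whence finiteness over $\operatorname{im}(\psi_M^e\otimes k)$ with a generator bound independent of $k$. The genuine difficulty is to upgrade this fibrewise, $\ell$-uniform finiteness to module-finiteness over $\mathbb{Z}[1/p]$: one checks that $\Spec\mathcal{Z}^{\leq r}\to\Spec\operatorname{im}(\psi_M^e)$ is quasi-finite and separated over the finite-type $\mathbb{Z}[1/p]$-base and then promotes it to a finite morphism via Zariski's main theorem, using that the Bernstein centre sits integrally closed inside the Hecke algebra in the appropriate sense. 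This integrality/uniformity step, where the deep content of \cite{fargues_scholze} and the integral geometry of \cite{dhkm_moduli} are indispensable, is the main obstacle; the rest is formal.
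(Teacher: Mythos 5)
This statement is not proved in the paper at all: it is imported verbatim from \cite{dhkm_finiteness,dhkm_conjecture} as a black-box input, so there is no internal argument to compare yours against. Your sketch does follow the broad strategy of the actual DHKM proof (moduli space of Langlands parameters of finite type over $\ZZ[1/p]$, the Fargues--Scholze morphism to the Bernstein center, finiteness of the depth-bounded center over its image), but as written it is not a proof, for two concrete reasons.

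First, your deduction of assertion (1) from ``the field-coefficient case'' is both circular and incorrect as commutative algebra. The finiteness of $\mathcal{H}_{\overline{\mathbb{F}}_\ell}(M,K)$ over its center for general reductive $M$ and all $\ell\neq p$ is not prior knowledge one can invoke --- it is part of the very theorem of \cite{dhkm_finiteness} being proved (earlier work covered only special cases), so using it as input begs the question. Even granting it, the descent to $\ZZ[1/p]$ is not formal: to conclude that the cokernel $C$ of $\mathcal{Z}(M,K)^{\oplus n}\to\mathcal{H}(M,K)$ is ``torsion, hence supported at finitely many $\ell$'' you need $C$ to be a finitely generated module over a Noetherian ring, which is essentially the statement you are trying to prove; and ``$\ell$-adic Nakayama'' likewise requires finiteness (or completeness) hypotheses you do not have --- note that a module such as $\QQ_\ell/\ZZ_\ell$ is $\ZZ$-torsion and has vanishing reduction mod $\ell$ without being zero. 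Second, the genuinely hard content --- that $\mathcal{Z}^{\leq r}$ is a \emph{finite module} over $\operatorname{im}(\psi_M^e)$ --- is exactly what you label ``the main obstacle'' and leave unproved. The hint you give does not close it: quasi-finiteness plus separatedness plus Zariski's main theorem only yields an open immersion into a scheme finite over the base, not finiteness of the morphism itself; one needs in addition properness or an integrality statement (and the uniform-in-$\ell$ finiteness of fibers and of the relevant inertial classes at level $K$ is itself a nontrivial admissibility input, not a formality). In DHKM this is handled by a different mechanism (compatibility of the parameter map with cuspidal support and admissibility of suitable modules over the image of the parameter ring), so your proposal, while pointing at the right geometric input, does not yet contain the argument that makes the theorem true.
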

In various places, we will need the following finiteness lemma:

\begin{lemma} \label{lem:finiteness} Assume $\sigma \in \textup{Rep}_A(M)$ is finitely generated. Then:
\begin{enumerate}
    \item $\sigma$ is admissible over $\mathcal{Z}_{A,M}$;   
    \item the $\mathcal{Z}_{A,M}$-module $\textup{End}_{A[M]}(\sigma)$ is noetherian.
\end{enumerate}    
\end{lemma}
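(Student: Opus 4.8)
The plan is to reduce everything to Theorem~\ref{finiteness_dhkm_thm} by choosing a single open pro-$p$-subgroup $K$ adapted to $\sigma$. First I would observe that since $\sigma$ is finitely generated as a smooth $A[M]$-module, there is an open pro-$p$-subgroup $K$ of $M$ such that $\sigma$ is generated (over $A[M]$, equivalently over $\mathcal{H}_A(M)$) by its $K$-fixed vectors $\sigma^K$, and moreover $\sigma^K$ is finitely generated as a module over the relative Hecke algebra $\mathcal{H}_A(M,K)$: indeed, a finite generating set lies in $\sigma^{K_0}$ for some such $K_0$, and one takes $K=K_0$. The functor $\sigma \mapsto \sigma^K$ together with $\mathcal{H}_A(M,K)\otimes_{\mathcal{H}_A(M,K)}(-)$ sets up the usual equivalence between the category of smooth $A[M]$-modules generated by their $K$-fixed vectors and the category of $\mathcal{H}_A(M,K)$-modules, under which $\sigma$ corresponds to the finitely generated $\mathcal{H}_A(M,K)$-module $\sigma^K$.

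For part (i): by Theorem~\ref{finiteness_dhkm_thm}(1), $\mathcal{H}_A(M,K)$ is module-finite over its center $\mathcal{Z}_A(M,K)$, so $\sigma^K$, being finitely generated over $\mathcal{H}_A(M,K)$, is finitely generated over $\mathcal{Z}_A(M,K)$. The action of $\mathcal{Z}_A(M,K)$ on $\sigma^K$ factors through the action of the full Bernstein center $\mathcal{Z}_{A,M}$ via the canonical map $\mathcal{Z}_{A,M}\to \mathcal{Z}_A(M,K)$, $z\mapsto ze_K$, from the description of $\mathcal{Z}_{A,M}$ as the projective limit. Hence $\sigma^K$ is finitely generated over $\mathcal{Z}_{A,M}$. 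For any smaller open pro-$p$-subgroup $K'\subseteq K$, one has $\sigma^{K'}=e_{K'}\sigma^K{}'$... more carefully: $\sigma^{K'}$ is a direct summand of $\mathcal{H}_A(M,K')\otimes_{\mathcal{H}_A(M,K)}\sigma^K$ (by smoothness $\sigma$ is the union of the $\sigma^{K'}$ and each is recovered by applying $e_{K'}$), so $\sigma^{K'}$ is finitely generated over $\mathcal{H}_A(M,K')$, hence over $\mathcal{Z}_A(M,K')$ by Theorem~\ref{finiteness_dhkm_thm}(1) again, hence over $\mathcal{Z}_{A,M}$. This is exactly the statement that $\sigma$ is admissible over $\mathcal{Z}_{A,M}$.

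For part (ii): any $A[M]$-endomorphism $\varphi$ of $\sigma$ preserves $\sigma^K$ and is determined by its restriction $\varphi|_{\sigma^K}$, because $\sigma^K$ generates $\sigma$ over $A[M]$ and $\varphi$ is $A[M]$-linear; moreover $\varphi|_{\sigma^K}$ is $\mathcal{H}_A(M,K)$-linear. This gives an injection of $\mathcal{Z}_{A,M}$-modules $\mathrm{End}_{A[M]}(\sigma)\hookrightarrow \mathrm{End}_{\mathcal{H}_A(M,K)}(\sigma^K)$. Now $\mathcal{Z}_A(M,K)$ is a finitely generated $A$-algebra by Theorem~\ref{finiteness_dhkm_thm}(2), hence Noetherian (as $A$ is Noetherian), and $\sigma^K$ is a finitely generated module over the module-finite $\mathcal{Z}_A(M,K)$-algebra $\mathcal{H}_A(M,K)$, so $\mathrm{End}_{\mathcal{H}_A(M,K)}(\sigma^K)$ is a submodule of $\mathrm{End}_{\mathcal{Z}_A(M,K)}(\sigma^K)$, which is a finitely generated module over the Noetherian ring $\mathcal{Z}_A(M,K)$, hence a Noetherian $\mathcal{Z}_A(M,K)$-module. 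Pulling back along $\mathcal{Z}_{A,M}\to\mathcal{Z}_A(M,K)$, it is a Noetherian $\mathcal{Z}_{A,M}$-module, and therefore so is its submodule $\mathrm{End}_{A[M]}(\sigma)$.

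The main obstacle, and the step requiring the most care, is the bookkeeping around the choice of $K$ and the compatibility of the three ``centers'' in play — the Bernstein center $\mathcal{Z}_{A,M}$, the center $\mathcal{Z}_A(M,K)$ of the relative Hecke algebra, and the image of $\mathcal{Z}_{A,M}$ therein — together with checking that admissibility over $\mathcal{Z}_{A,M}$ (a condition on \emph{all} $\sigma^{K'}$) really does follow from the single well-chosen $K$; everything else is a formal consequence of Theorem~\ref{finiteness_dhkm_thm} and the Noetherian hypothesis on $A$.
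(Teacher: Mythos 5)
Your proposal is correct and follows essentially the same route as the paper: reduce to Theorem~\ref{finiteness_dhkm_thm} by choosing $K$ fixing a finite generating set, deduce that each $\sigma^{K'}$ ($K'\subseteq K$) is finitely generated over $\mathcal{Z}_A(M,K')$, hence over $\mathcal{Z}_{A,M}$ via the surjection $\mathcal{Z}_{A,M}\to\mathcal{Z}_A(M,K')$, and for (ii) embed $\End_{A[M]}(\sigma)$ into $\End_{\mathcal{H}_A(M,K)}(\sigma^K)$ and use noetherianity of $\mathcal{Z}_A(M,K)$. The only blemish is the claim that $\sigma^{K'}$ is a \emph{direct summand} of $\mathcal{H}_A(M,K')\otimes_{\mathcal{H}_A(M,K)}\sigma^K$; what you actually have (and all you need) is a natural surjection from this finitely generated module onto $\sigma^{K'}$, which is how the paper argues.
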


\begin{proof} (i) Because $\sigma$ is finitely generated, there exists a finite collection of vectors $v_1, \dots, v_n$ which generates $\sigma$. These vectors induce a surjection $\mathcal{H}_A(M)^{\oplus n} \twoheadrightarrow \sigma$ in $\textup{Rep}_A(M)$. We choose an open pro-$p$-subgroup $K$ in $M$ fixing all these vectors. Let $K'$ be an open subgroup $K$. Let $e_K$ and $e_{K'}$ be the idempotent in the Hecke algebras associated to $K$ and $K'$. Then it is easy to see that the previous surjection induces a surjection $\mathcal{H}_A(M,K')^{\oplus n} \twoheadrightarrow \sigma^{K'}$. As $A$ is noetherian, the algebra $\mathcal{H}_A(M,K)$ is finite over its center $\mathcal{Z}_A(M,K)$ by Theorem \ref{finiteness_dhkm_thm}, which is itself a quotient of $\mathcal{Z}_{A,M}$. So $\sigma^{K'}$ is a finitely generated $\mathcal{Z}_{A,M}$-module.

\noindent (ii) Because $\sigma$ is generated by $\sigma^K$, we have $\textup{End}_{A[M]}(\sigma) \hookrightarrow \textup{End}_{\mathcal{H}_A(M,K)}(\sigma^K)$. By Theorem \ref{finiteness_dhkm_thm}, the right-hand side is a finitely generated module over $\mathcal{Z}_A(M,K)$, so it is finitely generated over $\mathcal{Z}_{A,M}$ as earlier. It is even noetherian because the module structure factors through the noetherian ring $\mathcal{Z}_A(M,K)$. Therefore $\textup{End}_{A[M]}(\sigma)$ is noetherian too.
\end{proof}

\begin{rem} 
The ring $\cZ_{A,M}$ is not noetherian, but is a countable product of noetherian rings. Thus, in regard to Lemma~\ref{lem:finiteness}(ii), we point out that a $\cZ_{A,M}$-module is noetherian if and only if it is finitely generated over $\cZ_{A,M}$ and the $\cZ_{A,M}$-structure factors through a finite number of components. 
\end{rem}

\subsection{Flat extension of scalars}\label{subsec:flat_extension} The following proposition is paraphrasing \cite[Prop 2.13]{lam_projective}.

\begin{prop} \label{extension_of_scalars_flat_lam_prop} Let $A \to B$ be a flat morphism and let $V$ and $W$ be in $\textup{Rep}_A(M)$. The natural map $\textup{Hom}_{A[M]}(V,W) \otimes_A B \to \textup{Hom}_{B[M]}(V \otimes_A B , W \otimes_A B)$ of $B$-modules is:
\begin{itemize}
    \item a monomorphism if $V$ is finitely generated;
    \item an isomorphism if $V$ is finitely presented.
\end{itemize} \end{prop}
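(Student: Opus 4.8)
This is the smooth-representation incarnation of the classical flat base-change statement for $\Hom$ (see \cite[Prop.\ 2.13]{lam_projective} for modules over a ring), and the plan is to run the usual module-theoretic argument, using the projective objects $P_{K,A}:=\ind_K^M(\mathbf{1})\cong C_c^\infty(K\backslash M, A)$, with $K$ an open pro-$p$ subgroup of $M$, in the role of finitely generated free modules. First I would record three facts. (a) $P_{K,A}$ is projective in $\Rep_A(M)$, with a functorial isomorphism $\Hom_{A[M]}(P_{K,A},W)\cong W^K$ (Frobenius reciprocity); projectivity holds because $W\mapsto W^K$ is exact on $\Rep_A(M)$, using that $K$ is pro-$p$ and $p\in A^{\times}$. (b) The natural map $P_{K,A}\otimes_A B\to P_{K,B}$ is an isomorphism, as $P_{K,A}$ is $A$-free on the discrete set $K\backslash M$ with $M$ permuting the basis. (c) For every $W\in\Rep_A(M)$ the natural map $W^K\otimes_A B\to (W\otimes_A B)^K$ is an isomorphism: for $K$ pro-$p$, taking $K$-invariants is the image of an $A$-linear idempotent given by averaging over a finite $p$-power order quotient of $K$ (invertible in $A$), and the image of an idempotent commutes with arbitrary base change. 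I would also check the routine compatibility that, under (a)--(c), the map of the proposition for $V=P_{K,A}$ is the identity of $W^K\otimes_A B$; by additivity, all of this passes to finite direct sums $\bigoplus_{i=1}^n P_{K_i,A}$.

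Granting this, the argument is a diagram chase. When $V$ is finitely generated, fix a surjection $Q_0:=\bigoplus_{i=1}^{n}P_{K_i,A}\twoheadrightarrow V$ with kernel $N$; flatness of $B$ keeps $0\to N\otimes_A B\to Q_0\otimes_A B\to V\otimes_A B\to 0$ exact, with $Q_0\otimes_A B\cong\bigoplus_i P_{K_i,B}$. Apply $\Hom_{A[M]}(-,W)$ to $0\to N\to Q_0\to V\to 0$, tensor the resulting left-exact sequence with the flat $A$-algebra $B$, and compare, via the natural transformation, with the left-exact sequence obtained by applying $\Hom_{B[M]}(-,W\otimes_A B)$ to the base-changed short exact sequence. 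One gets a commuting ladder of three-term left-exact sequences whose middle vertical arrow is the isomorphism of (a)--(c) for $Q_0$; chasing it shows the left vertical arrow --- the map of the proposition for $V$ --- is a monomorphism. When $V$ is finitely presented I would moreover arrange $N$ to be finitely generated, choose a surjection $Q_1:=\bigoplus_{j=1}^{m}P_{K'_j,A}\twoheadrightarrow N$, splice to $Q_1\to Q_0\to V\to 0$, and repeat the argument with the left-exact sequences coming from $\Hom(-,W)$ applied to this presentation: now both the middle and the right vertical arrows are isomorphisms, so the left one is an isomorphism, being the induced map on kernels of the horizontal arrows.

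The only steps requiring genuine care, as opposed to bookkeeping, are (c) --- which really uses $p\in A^{\times}$ and fails for general $A$ --- and, in the finitely presented case, the reduction from the abstract hypothesis to the explicit two-step resolution by finite sums of the $P_{K,A}$; the latter is harmless, since by definition of finite presentation we may take $N$ finitely generated, and then $N$, being finitely generated smooth, is a quotient of some $\bigoplus_{j=1}^{m}P_{K'_j,A}$. Everything else --- left-exactness of $\Hom(-,W)$, flatness of $B$ commuting past a left-exact sequence of $A$-modules, and elementary diagram chasing (the four and five lemmas) --- is formal. Alternatively one could replace $V$ by $V^{K}$ for a suitable $K$ and invoke \cite[Prop.\ 2.13]{lam_projective} directly over the Noetherian ring $\mathcal{H}_A(M,K)$, but the approach above avoids tracking the comparison between $\Rep_A(M)$ and $\mathcal{H}_A(M,K)$-modules.
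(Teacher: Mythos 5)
Your proof is correct, but it takes a genuinely different route from the paper. The paper disposes of the statement in two lines: a finitely generated $V$ lies in the depth $\leq r$ subcategory for some $r$, that subcategory is equivalent to modules over the direct factor ring $e_{\leq r}\mathcal{H}_A(M,K)$ for $K$ small enough, and then \cite[Prop 2.13]{lam_projective} is quoted verbatim for modules over that ring. You instead re-run Lam's module-theoretic argument inside $\Rep_A(M)$ itself, with the compactly induced objects $\ind_K^M(\mathbf{1})$ playing the role of finite free modules; the three inputs you isolate (projectivity plus $\Hom(\ind_K^M\mathbf{1},W)\cong W^K$, base change of $\ind_K^M\mathbf{1}$, and $W^K\otimes_AB\cong(W\otimes_AB)^K$ via the averaging idempotent) are all correct, and the ladder/diagram-chase argument with a one-step (resp.\ two-step) resolution gives exactly the monomorphism (resp.\ isomorphism) claims; your reduction of ``finitely presented'' to a two-step resolution by finite sums of the $P_{K,A}$ is the standard lemma and is fine here since the $P_{K,A}$ are finitely generated projectives. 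What the two approaches buy: the paper's reduction is shorter but silently relies on the depth decomposition and the Morita-type equivalence with $e_{\leq r}\mathcal{H}_A(M,K)$-modules (and on the observation that $\Hom_{A[M]}(V,W)$ only sees the depth $\leq r$ part of $W$), whereas your argument is self-contained, works for arbitrary $W$ without invoking that equivalence, and makes explicit the only non-formal ingredient, namely $p\in A^{\times}$ entering through exactness and base-change compatibility of $K$-invariants. Your closing alternative (``replace $V$ by $V^K$ and apply Lam over $\mathcal{H}_A(M,K)$'') is essentially the paper's route, and as you note it does require tracking the comparison of categories, which is precisely what the paper's appeal to the depth factor $e_{\leq r}\mathcal{H}_A(M,K)$ accomplishes.
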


\begin{proof} To see this is \cite[Prop 2.13]{lam_projective}, note that there exists $r \geq 0$ such that $V$ has depth at most $r$ if $V$ is finitely generated. The category $\textup{Rep}_A^{\leq r}(M)$ of representation of depth at most $r$ is equivalent to a category of modules over a direct factor ring $e_{\leq r} \mathcal{H}_A(M,K)$ of $\mathcal{H}_A(M,K)$ where $K$ is an open pro-$p$-subgroup that is small enough. So the proposition of Lam applies. \end{proof}

When $A$ is noetherian, the finiteness of Hecke algebras from Theorem \ref{finiteness_dhkm_thm} ensures that finitely generated representations are also finitely presented. In what follows, we will repeatedly use the compatibility property in Proposition~\ref{extension_of_scalars_flat_lam_prop} without comment.

By Lemma~\ref{lem:finiteness}, we can apply the following property to the centers of endomorphism rings of finitely generated objects:
\begin{lemma}[{\cite[Lem A.2]{dhkm_conjecture}}]
Let $\mathcal{E}$ be an $A$-algebra with center $Z(\mathcal{E})$ such that $\mathcal{E}$ is finitely generated over $Z(\mathcal{E})$. Let $A'$ be a flat commutative $A$-algebra. Then: $$Z(\mathcal{E})\otimes_AA'=Z(\mathcal{E}\otimes_AA').$$
\end{lemma}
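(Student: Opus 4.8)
The plan is to exhibit $Z(\mathcal{E})$ as the kernel of an explicit $A$-linear map and then use flatness to commute the kernel with the base change. Since $\mathcal{E}$ is finitely generated over $Z(\mathcal{E})$, choose elements $x_1,\dots,x_n\in\mathcal{E}$ that generate it over $Z(\mathcal{E})$. An element $z\in\mathcal{E}$ lies in $Z(\mathcal{E})$ if and only if it commutes with each $x_i$. Define the map $\Phi\colon \mathcal{E}\to \mathcal{E}^{\oplus n}$ by $\Phi(e)=(ex_1-x_1e,\dots,ex_ne-x_ne)$; it is $A$-linear because the structure map $A\to\mathcal{E}$ has image in the center of $\mathcal{E}$, and by construction $Z(\mathcal{E})=\Ker(\Phi)$. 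Thus we have a left-exact sequence of $A$-modules $0\to Z(\mathcal{E})\to \mathcal{E}\xrightarrow{\ \Phi\ }\mathcal{E}^{\oplus n}$.

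Next, apply $-\otimes_A A'$. As $A'$ is flat over $A$, the sequence $0\to Z(\mathcal{E})\otimes_A A'\to \mathcal{E}\otimes_A A'\xrightarrow{\ \Phi\otimes 1\ }(\mathcal{E}\otimes_A A')^{\oplus n}$ remains exact, so $Z(\mathcal{E})\otimes_A A'=\Ker(\Phi\otimes 1)$. It remains to identify this kernel with $Z(\mathcal{E}\otimes_A A')$. On elementary tensors one computes $(\Phi\otimes 1)(e\otimes a')=\big((ex_i-x_ie)\otimes a'\big)_i=\big([\,e\otimes a',\,x_i\otimes 1\,]\big)_i$; since both $\Phi\otimes 1$ and the commutator map $w\mapsto([w,x_i\otimes1])_i$ are $A'$-linear and agree on the generating set of elementary tensors, they coincide on all of $\mathcal{E}\otimes_A A'$. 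Hence $\Ker(\Phi\otimes 1)=\{w\in\mathcal{E}\otimes_A A' : [w,x_i\otimes1]=0,\ i=1,\dots,n\}$.

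Finally, note that the $x_i\otimes 1$ generate $\mathcal{E}\otimes_A A'$ over $Z(\mathcal{E})\otimes_A A'$, and $Z(\mathcal{E})\otimes_A A'\subseteq Z(\mathcal{E}\otimes_A A')$ since a central element of $\mathcal{E}$ tensored with a scalar from $A'$ is still central; therefore the $x_i\otimes 1$ generate $\mathcal{E}\otimes_A A'$ over $Z(\mathcal{E}\otimes_A A')$ as well. Consequently an element of $\mathcal{E}\otimes_A A'$ is central if and only if it commutes with each $x_i\otimes 1$, which gives $\Ker(\Phi\otimes 1)=Z(\mathcal{E}\otimes_A A')$. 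Combining with the previous step yields $Z(\mathcal{E})\otimes_A A'=Z(\mathcal{E}\otimes_A A')$. The only point needing care is the last paragraph's observation that the generators survive base change to generators over the a priori larger center $Z(\mathcal{E}\otimes_A A')$; everything else is formal, with flatness used solely to preserve the left-exact sequence defining the center.
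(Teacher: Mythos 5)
Your proof is correct and is essentially the standard argument for this lemma (which the paper itself does not reprove but imports from \cite[Lem A.2]{dhkm_conjecture}): realize the center as the kernel of the $A$-linear commutator map against a finite generating set of $\mathcal{E}$ over $Z(\mathcal{E})$, use flatness to commute this kernel with $-\otimes_A A'$, and then observe that commuting with the $x_i\otimes 1$ suffices for centrality because these elements generate $\mathcal{E}\otimes_A A'$ over the (central) image of $Z(\mathcal{E})\otimes_A A'$. Your final paragraph correctly isolates and justifies the only nontrivial point, so nothing is missing.
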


\subsection{Free extension of scalars} Proposition \ref{extension_of_scalars_flat_lam_prop} can be improved when the extension is free.

\begin{prop} \label{prop:free-extension-scalars-hom-fg} Let $B$ be an $A$-algebra such that $B$ is free as an $A$-module. Let $V$ and $W$ be in $\textup{Rep}_A(M)$. The natural map  $\textup{Hom}_{A[M]}(V,W) \otimes_A B \to \textup{Hom}_{B[M]}(V \otimes_A B , W \otimes_A B)$ of $B$-modules is bijective if $V$ is finitely generated or if $B$ is finite free over $A$. \end{prop}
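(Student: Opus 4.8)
The plan is to split into the two stated cases, reducing each to a situation where the natural map can be checked directly. First consider the case where $V$ is finitely generated. By Theorem~\ref{finiteness_dhkm_thm}(1)--(2), since $A$ is Noetherian, $V$ is finitely presented; hence Proposition~\ref{extension_of_scalars_flat_lam_prop} already gives the isomorphism once we know $A\to B$ is flat, and a free module is flat. So in the finitely generated case there is essentially nothing new to do beyond invoking the previous proposition. (One could alternatively argue directly: choose a presentation $\Hecke_A(M,K)^{\oplus m}\to \Hecke_A(M,K)^{\oplus n}\to V^K\to 0$ at a suitable level $K$, apply $\Hom_{\Hecke_A(M,K)}(-,W^K)$, use left-exactness and the obvious isomorphism for finite free modules, then tensor with $B$ and use right-exactness of $-\otimes_A B$ together with flatness to commute the two left-exact sequences; but quoting Proposition~\ref{extension_of_scalars_flat_lam_prop} is cleaner.)

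The genuinely new content is the case where $B$ is \emph{finite} free over $A$ but $V$ is possibly not finitely generated. Here the idea is to exploit that $B\cong A^{\oplus d}$ as an $A$-module, so that $-\otimes_A B$ is, as a functor to $A$-modules (forgetting the $B$-structure), just $(-)^{\oplus d}$. The plan is: first show that for \emph{any} $V,W$ in $\Rep_A(M)$ the natural map $\Hom_{A[M]}(V,W)\otimes_A B\to \Hom_{B[M]}(V\otimes_A B,W\otimes_A B)$ is an isomorphism of $A$-modules, by comparing both sides after forgetting to $A$-modules. On the left, $\Hom_{A[M]}(V,W)\otimes_A B\cong \Hom_{A[M]}(V,W)^{\oplus d}$. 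On the right, one uses the adjunction/base-change identity $\Hom_{B[M]}(V\otimes_A B, W\otimes_A B)\cong \Hom_{A[M]}(V, W\otimes_A B)$ (restriction of scalars is right adjoint to $-\otimes_A B$, and this is compatible with the smooth $M$-action since $B$ is a trivial $M$-module and the functors are exact), and then $W\otimes_A B\cong W^{\oplus d}$ as $A[M]$-modules, giving $\Hom_{A[M]}(V,W^{\oplus d})\cong \Hom_{A[M]}(V,W)^{\oplus d}$ because $\Hom$ commutes with finite direct sums in the second variable. The final step is to check that the composite of these identifications agrees with the natural map in the statement, which is a diagram chase: trace an elementary tensor $\varphi\otimes b$ through each identification and confirm it produces the $B[M]$-linear map $v\otimes b'\mapsto \varphi(v)\otimes bb'$. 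This last verification is where one must be slightly careful, since it requires that the $B$-module structures match up, not merely the underlying $A$-modules; but once the $A$-module isomorphism is established and shown to be the natural map, $B$-linearity is automatic.

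The main obstacle, such as it is, is bookkeeping rather than mathematics: making the chain of identifications $\Hom_{B[M]}(V\otimes_A B, W\otimes_A B)\cong \Hom_{A[M]}(V, W\otimes_A B)\cong \Hom_{A[M]}(V,W)\otimes_A B$ genuinely canonical and compatible with the map asserted in the proposition, and ensuring the smoothness hypothesis is preserved throughout (which it is, because $-\otimes_A B$ on smooth modules is still smooth and the adjunction restricts to smooth representations). There is no finiteness needed on $V$ in this second case precisely because the only finiteness used is that $B$ has finite rank over $A$, which lets $\Hom(-,-)$ commute with the relevant direct sum. I do not anticipate needing Noetherianity of $A$ for the second case, only for the first (to pass from finitely generated to finitely presented).
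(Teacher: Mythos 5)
Your argument is correct, but it is organized differently from the paper's. The paper proves both cases at once by a single support argument: choosing an $A$-basis $\mathcal{B}$ of $B$, it identifies $\textup{Hom}_{B[M]}(V\otimes_A B,W\otimes_A B)\cong \textup{Hom}_{A[M]}(V,W\otimes_A B)$ via the same extension-of-scalars adjunction you use, embeds this into $\prod_{\mathcal{B}}\textup{Hom}_{A[M]}(V,W)$, characterizes the images of both sides of the natural map by finiteness-of-support conditions, and then observes that finite generation of $V$ (or finiteness of $\mathcal{B}$) forces the support of any $B[M]$-linear map to be finite. In particular the paper's treatment of the finitely generated case needs neither Noetherianity of $A$ nor finite presentation of $V$, only finite generation as an $A[M]$-module. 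Your treatment of that case instead quotes Proposition~\ref{extension_of_scalars_flat_lam_prop} after upgrading ``finitely generated'' to ``finitely presented'' via the Hecke-algebra finiteness of Theorem~\ref{finiteness_dhkm_thm}; this is legitimate under the standing hypothesis that $A$ is Noetherian (the paper itself records this implication right after Proposition~\ref{extension_of_scalars_flat_lam_prop}), but it makes the statement a corollary of the flat case rather than a genuinely stronger result, and it would not survive dropping Noetherianity, whereas the paper's direct argument would. Your second case (finite free $B$, arbitrary $V$) is essentially the same as the paper's, just phrased as a chain of canonical isomorphisms
\begin{equation*}
\textup{Hom}_{B[M]}(V\otimes_A B,W\otimes_A B)\cong \textup{Hom}_{A[M]}(V,W\otimes_A B)\cong \textup{Hom}_{A[M]}(V,W)^{\oplus d}\cong \textup{Hom}_{A[M]}(V,W)\otimes_A B
\end{equation*}
rather than as the coincidence of direct sum and product over a finite basis; your remark that it suffices to check bijectivity after forgetting to $A$-modules, since the natural map is $B$-linear from the outset, is sound, as is the observation that smoothness is preserved throughout. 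In short: no gap, but the paper's unified support argument is the more economical and more general route, while yours trades that generality for brevity by leaning on the earlier flat base-change proposition.
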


\begin{proof} Let $\phi \in \textup{Hom}_{B[M]}(V \otimes_A B , W \otimes_A B)$. Then $V \otimes_A B \simeq \oplus_\mathcal{B} V$ as $A[M]$-modules by choosing a basis $\mathcal{B}$ of $B$ over $A$. Because $\phi$ is $B$-linear, its restriction to any copy of $V$ surely determines $\phi$. We fix such a copy of $V$ from now on. Conversely, any morphism $\textup{Hom}_{A[M]}(V,W \otimes_A B)$ can be extended in a unique way as a morphism of $V \otimes_A B$ that is $B$-linear. In other words, we have a $B$-module isomorphism
$$\textup{Hom}_{B[M]}(V \otimes_A B,W \otimes_A B) \simeq \textup{Hom}_{A[M]}(V,W \otimes_A B).$$
As $W \otimes_A B \simeq \oplus_\mathcal{B} W \subseteq \prod_\mathcal{B} W$, we obtain an embedding 
$$\textup{Hom}_{B[M]}(V\otimes_A B,W\otimes_A B) \hookrightarrow \textup{Hom}_{A[M]}(V,\prod_\mathcal{B} W) = \prod_\mathcal{B} \textup{Hom}_{A[M]}(V,W).$$
We write $(\phi_\lambda)$ for the image of $\phi$ where $\lambda$ runs over $\mathcal{B}$.

The image of $\phi \in \textup{Hom}_{A[M]}(V,W) \otimes_A B$ into this product corresponds to elements of the form $(\phi_\lambda)$ where all but finitely many $\phi_\lambda$'s are non zero. The image of $\phi \in \textup{Hom}_{B[M]}(V \otimes_A B,W \otimes_A B)$ consists of $(\phi_\lambda)$ where $I_{\phi,v} = \{ \lambda \ | \ \phi_\lambda(v) \neq 0 \}$ is a finite set for all $v \in V$ (or equivalently $\phi(v)$ belongs the direct sum). It is clear with this description that 
$$\textup{Hom}_{A[M]}(V,W) \otimes_A B \hookrightarrow \textup{Hom}_{B[M]}(V \otimes_A B,W \otimes_A B).$$
Of course, if the direct sum and the product over $\mathcal{B}$ agree \textit{i.e.} $B/A$ is finite, then all maps above are isomorphisms and the proposition follows.

For $\phi \in \textup{Hom}_{B[M]}(V \otimes_A B,W \otimes_A B)$, the support $I_\phi = \cup_{v \in V} I_{\phi,v}$ is the set of $\lambda$'s such that $\phi_\lambda \neq 0$. When $V$ is finitely generated, we take $v_1$, \dots, $v_n$ generating $V$ and notice that $I_\phi = \cup_i I_{v_i}$. Therefore $I_\phi$ is finite \textit{i.e.} all but finitely many $\phi_\lambda$'s are non zero. The result follows. \end{proof}

\subsection{Geometric lemma}\label{subsec:geometric} In this subsection, let $R$ be a commutative algebra over $\ZZ' = \ZZ[\sqrt{q}^{-1}]$. Fix a maximal split torus $A_0$ of $G$. Its centralizer $M_0$ is a Levi factor of a minimal parabolic $P_0$. We say a parabolic subgroup $P$ is standard if $P\supset A_0$ and semistandard if $P\supset P_0$; in either case, $P$ has a unique Levi containing $A_0$, such a Levi is called standard (resp. semistandard) if $P$ is standard (resp. semistandard). In this setup we can define $W_M$ and $W_G$ in the usual way and have a canonical injection $W_M\hookrightarrow W_G$.

The assumption that $M$ be a standard Levi subgroup is convenient for formulating the geometric lemma. It does not cause any loss of generality in our final results because every Levi subgroup is $G$-conjugate to a standard Levi subgroup (c.f. Remark~\ref{rem:drop_standard}). Therefore, outside of this subsection (including the introduction), we frequently omit the word ``standard'' with the understanding that the reader can adapt the proofs to the general case, or reduce to the standard case, as needed.

Let $P$ and $Q$ be two semistandard parabolic subgroups with a common standard Levi subgroup $M$ and let $\sigma$ be a smooth $R[M]$-module. The geometric lemma gives a filtration of functors on the category of smooth $R[M]$-modules,
$$0= F_0\subset F_1\subset \cdots \subset F_k = r_Q^Gi_P^G,$$
and isomorphisms $$F_i/F_{i-1} \cong i^M_{M\cap w_i^{-1}P}\circ \dot{w_i} \circ r^M_{M\cap w_iQ},$$ where $\dot{w_i}$ are representatives of the double cosets $w_i\in P\backslash G/Q$. These double cosets are in bijection with the set $W_M\backslash W_G/W_M$, which does not depend on $P$ or $Q$, but we will equip $W_M\backslash W_G/W_M$ with a linear ordering that refines the Bruhat partial ordering relative to $P$ and $Q$, which is defined as
$$w'\leq w\text{ if } Pw' Q\subset \overline{ PwQ},$$ the closure being taken in the $p$-adic topology on $G$. Choose a linear ordering, denoted by $\preceq$, that refines the Bruhat partial ordering. If $Q=P$, then $1$ represents the smallest element of $W_M\backslash W_G/W_M$; if $Q = \overline{P}$, it represents the largest.

Define $$X_{\prec w} = \bigcup_{w'\prec w} Pw'Q \ \ \ ,\ \ \ \ X_{\preceq w} = \bigcup_{w'\preceq w} Pw'Q\ ,$$ and the following subfunctors of $i_P^G$:
\begin{align*}
\widetilde{F}_{PQ}^{\prec w}(\sigma)&:= \{f\in i_P^G(\sigma)\ :\ \supp(f)\cap X_{\prec w}= \emptyset\}\\
\widetilde{F}_{QP}^{\preceq w}(\sigma) &:= \{f\in i_P^G(\sigma)\ :\ \supp(f)\cap X_{\preceq w} = \emptyset\}\\
\widetilde{F}_{QP}^{< w}(\sigma) &:= \{f\in i_P^G(\sigma)\ :\ \supp(f)\cap \overline{PwQ}\subset PwQ\}\\
\widetilde{F}_{QP}^{\leq w}(\sigma) &:= \{f\in i_P^G(\sigma)\ :\ \supp(f)\cap \overline{PwQ}=\emptyset\}
\end{align*}
We have $\widetilde{F}_{QP}^{\preceq w} \subset \widetilde{F}_{QP}^{\prec w}\subset \widetilde{F}_{QP}^{<w}$ and $w_1\prec w_2 \implies \widetilde{F}_{QP}^{\prec w_2}\subset \widetilde{F}_{QP}^{\prec w_1}$.

If $\widetilde{F}_{QP}^{(-)}$ is one of the four functors defined above, we write $F_{QP}^{(-)}$ for its image in the parabolic restriction $r_Q^Gi_P^G$. Thus the filtration $F_0\subset \cdots\subset F_k$ of the geometric lemma is achieved by enumerating $W_M\backslash W_G/W_M$ in reverse size order $w_0 \succ \cdots \succ w_k$ and there are isomorphisms
$$I_w := F_{QP}^{< w}/F_{QP}^{\leq w} \cong F_{QP}^{\prec w}/F_{QP}^{\preceq w} \cong i^M_{M\cap w^{-1}P}\circ\dot{w} \circ r^M_{wQ\cap M}.$$ We will not need to write down these isomorphisms explicitly except when $w = 1$ where the isomorphism $I_1 \overset{\sim}{\to} \text{id}$ is induced by the map $f\mapsto \int_{U_Q\cap U_{\overline{P}}}f(u)du$ for $f$ in $\widetilde{F}^{<1}_{QP}$.

When $Q=P$, observe that $F_{QP}^{< 1}$ is $r_Q^Gi_P^G$ and $F_{QP}^{\leq 1}$ is the image under parabolic restriction of the functions supported away from $P$. 

When $Q = \overline{P}$ is the opposite parabolic, $F_{QP}^{<1}$ is the image under parabolic restriction of the functions supported on $P\overline{P}$ and $F_{QP}^{\leq 1}=0$.

\section{Unramified twisting and intertwining operators}\label{sec:unramifiedtwisting}

\subsection{The universal unramified twist}
Let $A$ be a Noetherian commutative algebra over $\ZZ' = \ZZ[\sqrt{q}^{-1}]$. Let $P$ and $Q$ be two parabolic subgroups with a common Levi subgroup $M$, and let $\sigma_0$ be a smooth $A[M]$-module. An intertwining operator, broadly speaking, is any element of $\Hom_{A[G]}(i_P^G(\sigma_0), i_Q^G(\sigma_0))$, however intertwining operators are studied and used most fruitfully as $\sigma_0$ varies within a family of representations defined by unramified twisting. More precisely, let
$$M^0 = \bigcap_{\chi}\ker|\chi|_F$$ as $\chi$ varies over the $F$-rational characters $\chi:M\to F^{\times}$, and for $m\in M$ let $\overline{m}$ denote its image in the free finite-rank abelian group $M/M^0$. The ring
$$R:= A[M/M^0]$$ is a commutative $A$-algebra, isomorphic to a polynomial ring in $r$ variables, where $r$ is the rank of $M/M^0$. The universal unramified character of $M$ is defined as
\begin{align*}
\Psi_M:M&\to R^{\times}\\
m&\mapsto \overline{m} = \Psi_M(m)\ .
\end{align*}
We will denote $\Psi_M$ simply by $\Psi$ if $M$ has been fixed. The universal unramified twist of $\sigma_0$ will be the $R[M]$-module defined by
$\sigma := \sigma_0\otimes_A \Psi.$ The underlying $R$-module is $\sigma_0 \otimes_A R$ and the action of $M$ is given by $$\sigma(m)(v\otimes 1) = \sigma_0(m)v\otimes \overline{m},\ \ \ v\in \sigma_0,\ m\in M.$$ The ``universal'' terminology is justified by the fact that for any $A$-algebra $\kappa$ that is a field, unramified characters $\chi:M\to \kappa^{\times}$ are equivalent to ring homomorphisms $R\to \kappa$ by way of the specialization $$\chi = \Psi\otimes_{R,\lambda}\kappa.$$ We will be particularly concerned with intertwining operators in $\Hom_{R[G]}(i_P^G(\sigma),i_Q^G(\sigma))$.

The Bernstein center $\cZ_{A,M}$ over $A$ is defined as the center of the category $\Rep_A(M)$, that is,
$$\cZ_{A,M} := \End(\text{id}_{\Rep_A(M)}).$$ It acts on every object of $\Rep_A(M)$ in a natural way. We will consider $\sigma_0$ as a $\cZ_{A,M}[M]$-module. 

Define the commutative $R$-algebra $$\fR = \cZ_{A,M}[M/M^0].$$ We will consider $\sigma$ as an $\fR[M]$-module where $M$ acts as above and $\fR$ acts as 
\begin{align*}
(z \overline{m})(v\otimes 1) &= zv\otimes \overline{m},\ v\in \sigma_0,\ z\in \cZ_{A,M},\ m\in M
\end{align*}

\begin{rem}\label{rem:differentaction}
Note that $\sigma$, by virtue of being an $R[M]$-module, carries an action of the center $\cZ_{R,M}$ of the category $\Rep_R(M)$, and there is a natural ring homomorphism $\fR\to \cZ_{R,M}$. We emphasize that this $\fR$ action is different from the one defined above
\end{rem}

\subsection{$\fR$-linear intertwining operators}
Our first results concern the subset $\Hom_{\fR[G]}(i_P^G(\sigma),i_Q^G(\sigma))$ of $\Hom_{R[G]}(i_P^G(\sigma),i_Q^G(\sigma))$. Composing $f\in i_Q^G(\sigma)$ with $f\mapsto f(1)$ induces the Frobenius reciprocity isomorphism,
$$\Hom_{\fR[G]}(i_P^G(\sigma),i_Q^G(\sigma)) \cong \Hom_{\fR[M]}(r_Q^Gi_P^G(\sigma),\sigma),$$ and we will frame our results in terms of $\Hom_{\fR[M]}(r_Q^Gi_P^G(\sigma),\sigma)$. We will freely use the notation introduced in Subsection~\ref{subsec:geometric}.

\begin{thm}\label{thm:intertwining}
Suppose $\sigma_0$ is finitely generated over $A[M]$. For any $w\neq 1$, $\Hom_{\fR[M]}(I_w(\sigma),\sigma)=0$.
\end{thm}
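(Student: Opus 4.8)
The plan is to analyze the subquotient $I_w(\sigma) \cong i^M_{M\cap w^{-1}P}\circ \dot w \circ r^M_{wQ\cap M}(\sigma)$ and show it admits no nonzero $\fR[M]$-map to $\sigma$, using the $\fR$-action on $\sigma$ — which twists by $\overline m$ — as the essential constraint. Since $\fR = \cZ_{A,M}[M/M^0]$ contains the group algebra of $M/M^0$ acting on $\sigma$ via the universal character $\Psi$, an $\fR[M]$-linear map must in particular be equivariant for this $M/M^0$-action. The first step is to identify how $M/M^0$ (equivalently, $A_M/A_M\cap M^0$, or some lattice therein) acts on $I_w(\sigma)$: by construction $I_w(\sigma)$ is built from $r^M_{wQ\cap M}(\sigma)$, on which the center $A_M$ acts through $\sigma$ as before, but then one applies the intertwining-by-$\dot w$ functor, which conjugates the Levi $M$ to $w M w^{-1}\cap M$ and hence twists the central character. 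Concretely, after passing through $\dot w$ the central character of $A_M\cap M^0\backslash A_M$ on $I_w(\sigma)$ is the $w$-conjugate $\Psi^w$ of $\Psi$, whereas on $\sigma$ it is $\Psi$ itself.

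The second step is to extract from $w\neq 1$ an element on which $\Psi$ and $\Psi^w$ genuinely differ. Because $w\notin W_M$, conjugation by $w$ does not fix $M$ pointwise — more precisely it moves the parabolic $P\cap M$-structure — so there is a cocharacter $\lambda$ into $A_M$ (or into $A_0$, intersected suitably) with $\langle \lambda\rangle$ not $w$-stable, giving an element $m\in M\cap G^0$ whose image $\overline m\in M/M^0$ satisfies $\overline m \neq \overline{w^{-1}mw}$, i.e. $\Psi(m)\neq \Psi^w(m)=\Psi(w^{-1}mw)$ as elements of $R^\times = A[M/M^0]^\times$. In fact one should choose $m$ central enough that $m$ acts on $i^M_{M\cap w^{-1}P}\circ\dot w\circ r^M_{wQ\cap M}(\sigma)$ by the scalar $\Psi(w^{-1}mw)\in \fR^\times$ (because such $m$ is central in $M$ and the whole construction is $R$-linear in $\sigma$, its action pulls through all three functors as multiplication by the appropriate group-algebra element), while it acts on $\sigma$ by $\Psi(m)\in\fR^\times$. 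Any $\phi\in\Hom_{\fR[M]}(I_w(\sigma),\sigma)$ then satisfies $\Psi(m)\,\phi = \phi\circ(\text{mult. by }\Psi(w^{-1}mw)) = \Psi(w^{-1}mw)\,\phi$, so $(\Psi(m)-\Psi(w^{-1}mw))\phi = 0$ in the $\fR$-module $\Hom_{\fR[M]}(I_w(\sigma),\sigma)$.

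The third step is to conclude that the annihilator relation forces $\phi=0$. Here is where one uses that $\Psi(m)-\Psi(w^{-1}mw) = \overline m - \overline{w^{-1}mw}$ is a \emph{non-zero-divisor} in $\fR = \cZ_{A,M}[M/M^0]$: since $M/M^0$ is free abelian and $\overline m\neq \overline{w^{-1}mw}$, the difference of the two distinct monomials $\overline m$ and $\overline{w^{-1}mw}$ is a non-zero-divisor in the group algebra over any commutative ring (it is, up to a unit monomial, of the form $t^k-1$ for a primitive vector $t^k$ in the free abelian group, which is a non-zero-divisor), and this persists over $\cZ_{A,M}$. Combined with the finiteness input — by Lemma~\ref{lem:finiteness} and Corollary~\ref{introcor:schur}'s underlying computation, or more directly because $i_P^G(\sigma)$ is finitely generated so $\Hom_{\fR[M]}(I_w(\sigma),\sigma)$ embeds in a noetherian $\fR$-module on which a non-zero-divisor of $\fR$ acts injectively on nonzero elements — we get $\phi=0$.

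\textbf{Main obstacle.} The delicate point is the second step: producing a \emph{central} element $m\in M\cap G^0$ with $\overline m\neq\overline{w^{-1}mw}$ and verifying carefully that it acts on the subquotient $I_w(\sigma)=i^M_{M\cap w^{-1}P}\circ\dot w\circ r^M_{wQ\cap M}(\sigma)$ by the honest scalar $\Psi(w^{-1}mw)\in\fR^\times$ (one must chase the $R$-linearity and the twist by $\dot w$ through both the Jacquet functor and the parabolic induction, checking the action is via the center of $M$ and not just of the smaller Levi $wMw^{-1}\cap M$). The existence of such $m$ ultimately rests on the fact that $w\in W_M\backslash W_G/W_M$ with $w\neq 1$ is represented by an element not normalizing $M$, hence not centralizing $A_M$; one extracts the cocharacter from a root of $A_M$ in $U_P$ or $U_{\overline P}$ moved by $w$, taking care that the resulting $m$ lies in $G^0$ (this is why the definition of $S$ involves $M\cap G^0$). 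Once the scalar action is pinned down, the algebra of non-zero-divisors in group rings of free abelian groups finishes the argument cleanly.
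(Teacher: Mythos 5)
There is a genuine gap, and it sits exactly where you flag the ``delicate point'': the claim that a suitable central $m$ acts on $I_w(\sigma)=i^M_{M\cap w^{-1}P}\circ\dot w\circ r^M_{M\cap wQ}(\sigma)$ by an honest scalar $\Psi(w^{-1}mw)\in\fR^{\times}$. Pulling $m\in A_M$ through the outer induction and through $\dot w$ is fine, but what comes out is the action of the conjugate $m^w$ on the Jacquet module $r^M_{M\cap wQ}(\sigma)$, and $m^w$ is central only in the smaller Levi $M\cap wM$, not in $M$. Its action therefore factors as $r_m\otimes\Psi(m^w)$, where $r_m$ is an invertible endomorphism of $r^M_{M\cap wQ}(\sigma_0)$ that is in general \emph{not} multiplication by an element of $\fR$ (nor by anything central), even if $\sigma_0$ satisfies Schur's lemma, since the Jacquet module can have a large endomorphism ring over $A[M\cap wM]$. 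You cannot repair this by choosing $m$ ``central enough'': that would require $m\in A_M\cap wA_Mw^{-1}$ with $\overline m\neq\overline{m^w}$, and such elements need not exist --- e.g.\ for $G=GL_3$, $M=GL_1\times GL_2$ and $w$ the nontrivial double coset, $A_M\cap wA_Mw^{-1}$ is the center of $G$, on which $w$ acts trivially. So the two requirements (scalar action on $I_w(\sigma)$ and $\overline m\neq\overline{m^w}$) are in genuine conflict, and the ``difference of two distinct monomials is a non-zero-divisor'' step has nothing to apply to. (A smaller issue: even granting scalars, you deduce $(c_1-c_2)\phi=0$ in the Hom-module and appeal to noetherianity; a non-zero-divisor of $\fR$ need not act injectively on a noetherian module, so the argument should instead be run pointwise on the target, where the leading-monomial argument does apply to $\sigma_0\otimes_A R$.)

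The paper's proof supplies precisely the missing device. After applying second adjunction to rewrite the space as $\Hom_{\fR[M\cap wM]}(\dot w\circ r^M_{w^{-1}Q\cap M}(\sigma),\,r^M_{M\cap w\overline P}(\sigma))$, it accepts that $a\in A_M$ acts on the source as $r_a\otimes\Psi(a^w)$ with $r_a$ a non-scalar invertible endomorphism, and tames $r_a$ by Cayley--Hamilton: by the finiteness results (Lemma~\ref{lem:finiteness}, resting on Theorem~\ref{finiteness_dhkm_thm}), $r_a$ satisfies a monic polynomial over $\cZ_{A,M}$ with unit constant term. The element $\beta=P_\alpha(\alpha)\in\fR[A_M]$, with $\alpha=\Psi(a^{-w})a$, then annihilates the source, while on the target it is a polynomial in the nontrivial monomial $Y=\Psi(a^{-w}a)$ (here one uses Waldspurger's Equation~(6) to choose $a$ with $a^{-w}a$ non-compact, not any failure of $w$ to normalize $M$) with invertible extreme coefficients; the leading-term lemma in the Laurent polynomial ring shows $\beta$ kills only $0$ on the target, whence $\phi=0$ pointwise. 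Your steps one and three are in the right spirit (the unramified twist by $w$ and the non-zero-divisor argument are both there in the paper), but without the Cayley--Hamilton input over the Bernstein center your argument does not go through.
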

The proof of Theorem~\ref{thm:intertwining} appears in Section~\ref{sec:proofs}, below. For now, let us take Theorem~\ref{thm:intertwining} for granted and use it to make the following definition.

\begin{definition}\label{def:signature}
Given an intertwining operator $J$ in $\Hom_{\fR[M]}(r_Q^Gi_P^G(\sigma),\sigma)$, Theorem~\ref{thm:intertwining} implies $J$ is zero on $F_{QP}^{\leq 1}(\sigma)$. Thus restricting $J$ to $F_{QP}^{<1}(\sigma)$ defines an endomorphism $\sigma \cong I_1(\sigma) \to \sigma$, which we call the $(\sigma,P,Q)$-signature of $J$ and denote $\sig_{PQ}(J)$.
\end{definition}

\begin{corollary}\label{cor:sig_injective}
An intertwining operator is uniquely determined by its $(\sigma,P,Q)$-signature, i.e., the map of $\fR$-modules $$\sig_{PQ}:\Hom_{\fR[M]}(r_Q^Gi_P^G(\sigma),\sigma)\to \End_{\fR[M]}(\sigma)$$ is injective.
\end{corollary}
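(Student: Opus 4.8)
The plan is to deduce Corollary~\ref{cor:sig_injective} from Theorem~\ref{thm:intertwining} by analyzing how an intertwining operator $J$ interacts with the geometric lemma filtration $0 = F_0 \subset F_1 \subset \cdots \subset F_k = r_Q^G i_P^G(\sigma)$. First I would recall that, since we are enumerating $W_M\backslash W_G/W_M$ in reverse size order $w_0 \succ \cdots \succ w_k$ with $w_k = 1$ (the smallest element, as noted in Subsection~\ref{subsec:geometric} because $Q$ and $P$ share the Levi $M$ and $w=1$ is always the bottom of the filtration for any $Q$), the bottom piece $F_1 = F_k$ in the filtration... more carefully: the filtration is indexed so that $F_i/F_{i-1} \cong I_{w_{i-1}}$ roughly, with $w=1$ contributing the last quotient. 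What matters is that $F_{QP}^{\leq 1}(\sigma)$ is the subfunctor corresponding to all $w \prec 1$, which is empty since $1$ is smallest, but $F_{QP}^{<1}(\sigma)$ is $r_Q^G i_P^G(\sigma)$ itself when $Q = P$ and in general $r_Q^G i_P^G(\sigma)/F_{QP}^{\leq 1}(\sigma) \cong$ something with $I_1 \cong \mathrm{id}$ at the ``top''. Let me restructure: the cleanest argument is that $\sig_{PQ}(J)$ is, by Definition~\ref{def:signature}, the endomorphism of $\sigma$ obtained by restricting $J$ to $F_{QP}^{<1}(\sigma)$, which vanishes on $F_{QP}^{\leq 1}(\sigma)$ by Theorem~\ref{thm:intertwining} and hence factors through $I_1(\sigma) \cong \sigma$.

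The heart of the matter: I would show that if $\sig_{PQ}(J) = 0$ then $J = 0$. Suppose $\sig_{PQ}(J) = 0$; this means $J$ vanishes on $F_{QP}^{<1}(\sigma)$. I claim $J$ then vanishes on the entire filtration by descending induction on the filtration steps. Precisely, for each $w$ in the enumeration, consider the subfunctor $F_{QP}^{<w}(\sigma) \subset r_Q^G i_P^G(\sigma)$ and show $J|_{F_{QP}^{<w}} = 0$ by induction on $w$ going up in the $\preceq$ order (equivalently, up the filtration). The base case is $w = 1$, which is the hypothesis. For the inductive step: given $J$ vanishes on $F_{QP}^{\prec w}(\sigma) = F_{QP}^{\preceq w'}(\sigma)$ for the predecessor $w'$, the map $J$ descends to a map $F_{QP}^{<w}(\sigma)/F_{QP}^{\leq w}(\sigma) \cong I_w(\sigma) \to \sigma$; by Theorem~\ref{thm:intertwining} (since $w \neq 1$) this map is zero, so $J$ vanishes on $F_{QP}^{<w}(\sigma)$ too. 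Iterating up to the top $w = w_0$ gives $J = 0$ on all of $r_Q^G i_P^G(\sigma)$. Then transport back through Frobenius reciprocity to conclude the map on $\Hom_{\fR[M]}(r_Q^G i_P^G(\sigma),\sigma)$, hence on $\Hom_{\fR[G]}(i_P^G(\sigma), i_Q^G(\sigma))$, is injective.

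The one subtlety I would be careful about is matching up the various filtration pieces $\widetilde F_{QP}^{\prec w}, \widetilde F_{QP}^{\preceq w}, \widetilde F_{QP}^{<w}, \widetilde F_{QP}^{\leq w}$ and their images $F_{QP}^{(-)}$ in $r_Q^G i_P^G$ correctly — in particular, using the chain $F_{QP}^{\preceq w} \subset F_{QP}^{\prec w} \subset F_{QP}^{<w}$ together with the isomorphisms $I_w \cong F_{QP}^{<w}/F_{QP}^{\leq w} \cong F_{QP}^{\prec w}/F_{QP}^{\preceq w}$ from Subsection~\ref{subsec:geometric}, and checking that the union of the $F_{QP}^{<w}$ over all $w$ really exhausts $r_Q^G i_P^G(\sigma)$ while the successive quotients are exactly the $I_w$. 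This is all bookkeeping that the geometric lemma setup already provides, so I do not expect a genuine obstacle; the real content is entirely in Theorem~\ref{thm:intertwining}, and this corollary is a formal consequence.

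\begin{proof}
By Definition~\ref{def:signature}, $\sig_{PQ}$ is well-defined: given $J \in \Hom_{\fR[M]}(r_Q^G i_P^G(\sigma),\sigma)$, Theorem~\ref{thm:intertwining} forces $J$ to vanish on $F_{QP}^{\leq 1}(\sigma)$, so its restriction to $F_{QP}^{<1}(\sigma)$ factors through $F_{QP}^{<1}(\sigma)/F_{QP}^{\leq 1}(\sigma) \cong I_1(\sigma) \cong \sigma$, yielding $\sig_{PQ}(J) \in \End_{\fR[M]}(\sigma)$. It remains to prove injectivity, so suppose $\sig_{PQ}(J) = 0$; equivalently, $J$ vanishes on $F_{QP}^{<1}(\sigma)$. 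Enumerate $W_M\backslash W_G/W_M$ in increasing order $1 = w_k \prec w_{k-1} \prec \cdots \prec w_0$ for the ordering $\preceq$. We show by induction on $i$, descending from $i = k$ to $i = 0$, that $J$ vanishes on $F_{QP}^{<w_i}(\sigma)$; for $i = 0$ this says $J = 0$ on $r_Q^G i_P^G(\sigma)$ since $F_{QP}^{< w_0} = r_Q^G i_P^G$.

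The base case $i = k$, i.e. $w_k = 1$, is the hypothesis $\sig_{PQ}(J) = 0$. For the inductive step, assume $J$ vanishes on $F_{QP}^{<w_{i+1}}(\sigma)$. Since $w_{i+1}$ is the immediate predecessor of $w_i$, we have $F_{QP}^{\prec w_i}(\sigma) = F_{QP}^{\preceq w_{i+1}}(\sigma) \subseteq F_{QP}^{<w_{i+1}}(\sigma)$, so $J$ vanishes on $F_{QP}^{\prec w_i}(\sigma)$, and a fortiori on $F_{QP}^{\preceq w_i}(\sigma) \subseteq F_{QP}^{\prec w_i}(\sigma)$. Hence $J$ restricted to $F_{QP}^{<w_i}(\sigma)$ descends to a map $F_{QP}^{<w_i}(\sigma)/F_{QP}^{\leq w_i}(\sigma) \cong I_{w_i}(\sigma) \to \sigma$. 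Since $w_i \neq 1$, Theorem~\ref{thm:intertwining} gives $\Hom_{\fR[M]}(I_{w_i}(\sigma),\sigma) = 0$, so this induced map is zero; that is, $J$ vanishes on $F_{QP}^{<w_i}(\sigma)$, completing the induction. Therefore $J = 0$, so $\sig_{PQ}$ is injective.
\end{proof}
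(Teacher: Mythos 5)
Your overall strategy (deduce injectivity from Theorem~\ref{thm:intertwining} by an induction along the geometric lemma filtration) is the intended one — the paper treats the corollary as immediate — but your execution rests on two false premises, so the written proof fails precisely in the interesting case $Q\neq P$. First, you assert that $1=w_k$ is the smallest element of $\preceq$; this is only true when $PQ$ is closed (e.g.\ $Q=P$). Subsection~\ref{subsec:geometric} states explicitly that for $Q=\overline{P}$ the class of $1$ is the \emph{largest} element (equivalently, as the introduction stresses, $\sigma$ then sits at the bottom of the filtration, not as a quotient). So for general $Q$ your base case is not the hypothesis at all: for the genuinely minimal $w_k$ the cell $Pw_kQ$ is closed, hence $F_{QP}^{<w_k}=r_Q^Gi_P^G(\sigma)$, and ``$J$ vanishes on $F_{QP}^{<w_k}$'' is the full conclusion, making the induction circular. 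Second, you have the monotonicity of the $F_{QP}^{<w}$ backwards: since $\overline{Pw_0Q}=G$ for the maximal (open) cell $w_0$, the module $F_{QP}^{<w_0}$ is the image of functions supported on $Pw_0Q$, i.e.\ the \emph{smallest} piece, not $r_Q^Gi_P^G(\sigma)$; moreover the $F_{QP}^{<w}$ are not nested along $\preceq$ (only along the Bruhat order), so even if your induction ran, the pieces it covers do not exhaust $r_Q^Gi_P^G(\sigma)$ (for $Q=\overline{P}$ you would only ever learn that $J$ kills the bottom piece). There is also a local slip in the inductive step: knowing $J$ vanishes on $F_{QP}^{\preceq w_i}$ does not let you descend modulo the larger submodule $F_{QP}^{\leq w_i}$; you should use the presentation $I_{w_i}\cong F_{QP}^{\prec w_i}/F_{QP}^{\preceq w_i}$ instead.

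The correct bookkeeping is to induct along the nested, exhaustive filtration $F_i:=F_{QP}^{\prec w_i}$ of the geometric lemma ($w_0\succ\cdots\succ w_k$, so $F_0\subset\cdots\subset F_k=r_Q^Gi_P^G(\sigma)$ and $F_i/F_{i-1}\cong I_{w_i}$), proving $J|_{F_i}=0$ by induction on $i$ from $0$ to $k$: at every step with $w_i\neq 1$ apply Theorem~\ref{thm:intertwining} to the quotient $I_{w_i}$, and at the single step where $w_i=1$ (wherever it occurs in the ordering) use the hypothesis — $J$ vanishes on $F_{QP}^{\leq 1}(\sigma)$ by Theorem~\ref{thm:intertwining} as in Definition~\ref{def:signature}, and $\sig_{PQ}(J)=0$ then forces $J$ to vanish on $F_{QP}^{<1}(\sigma)\supseteq F_{QP}^{\prec 1}(\sigma)=F_i$. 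Since $X_{\prec w_k}=\emptyset$, the top term $F_k$ is all of $r_Q^Gi_P^G(\sigma)$, giving $J=0$. With this reorientation your argument becomes the intended formal consequence of Theorem~\ref{thm:intertwining}; as written, however, it does not prove the statement for $Q\neq P$.
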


Observe that there is a natural left action of $\End_{\fR[M]}(\sigma)$ on $\Hom_{\fR[M]}(r_Q^Gi_P^G(\sigma),\sigma)$ via its action on $\sigma$. Given any intertwining operator $J$ in $\Hom_{\fR[M]}(r_Q^Gi_P^G(\sigma),\sigma)$ the $(\sigma,P,Q)$-signatures of its orbit form a principle left-ideal in $\End_{\fR[M]}(\sigma)$, namely
$$\sig_{PQ}(\End_{\fR[M]}(\sigma)\cdot J) = \End_{\fR[M]}(\sigma)\cdot \sig_{PQ}(J)\subset \End_{\fR[M]}(\sigma).$$

Let us apply this when $Q=P$. In this case the isomorphism $I_1\overset{\sim}{\to} \text{id}$ in \ref{subsec:geometric} is induced by $f\mapsto f(1)$ on $\widetilde{F}_{PP}^{<1}$. Since $F_{PP}^{<1}=r_P^Gi_P^G$, this defines a quotient map
\begin{align*}
J_{P|P}(\sigma_0):r_P^Gi_P^G\sigma&\to \sigma\\
f&\mapsto f(1),\ 
\end{align*}
and we have $\sig_{PP}(J_{P|P}(\sigma_0))=\text{id}_{\sigma}$ by definition. Since $$\sig_{PP}(\End_{\fR[M]}(\sigma)\cdot J_{P|P}(\sigma_0)) = \End_{\fR[M]}(\sigma)\cdot \sig_{PP}(J_{P|P}(\sigma_0)) = \End_{\fR[M]}(\sigma),$$ the map $\sig_{PP}$ is surjective. Now, under Frobenius reciprocity, $J_{P|P}(\sigma_0)$ corresponds to the identity endomorphism $i_P^G(\sigma)\to i_P^G(\sigma)$, so we deduce
\begin{corollary}\label{cor:schurslemmafrakR}
The composition of $\sig_{PP}$ with Frobenius reciprocity $$\End_{\fR[G]}(i_P^G(\sigma))\cong \Hom_{\fR[M]}(r_P^Gi_P^G(\sigma),\sigma)\xrightarrow{\sig_{PP}} \End_{\fR[M]}(\sigma)$$ defines an inverse to the natural map $\End_{\fR[M]}(\sigma)\to \End_{\fR[G]}(i_P^G(\sigma))$ given by the induction functor $i_P^G$.
\end{corollary}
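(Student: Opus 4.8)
The plan is to show that the composition
$$\End_{\fR[M]}(\sigma)\xrightarrow{\ i_P^G\ }\End_{\fR[G]}(i_P^G(\sigma))\cong \Hom_{\fR[M]}(r_P^Gi_P^G(\sigma),\sigma)\xrightarrow{\ \sig_{PP}\ }\End_{\fR[M]}(\sigma)$$
is the identity, and then combine this with the already-established surjectivity of $\sig_{PP}$ (hence of the outer composition displayed in Corollary~\ref{cor:schurslemmafrakR}) together with the injectivity of $\sig_{PP}$ from Corollary~\ref{cor:sig_injective} to conclude that both maps in the composition are mutually inverse bijections.

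First I would unwind the Frobenius reciprocity isomorphism on the level of elements. Given $\phi\in\End_{\fR[M]}(\sigma)$, the endomorphism $i_P^G(\phi)$ of $i_P^G(\sigma)$ is post-composition: $f\mapsto (g\mapsto \phi(f(g)))$. Under Frobenius reciprocity this corresponds to the map $r_P^Gi_P^G(\sigma)\to\sigma$ obtained by first applying $i_P^G(\phi)$ and then evaluating at $1$, i.e. the map induced on $r_P^Gi_P^G(\sigma)$ by $f\mapsto \phi(f(1))$ on the appropriate subfunctor $\widetilde F^{<1}_{PP}$. But $f\mapsto f(1)$ on $\widetilde F^{<1}_{PP}$ is precisely the map inducing the isomorphism $I_1\overset{\sim}{\to}\mathrm{id}$ and the quotient map $J_{P|P}(\sigma_0)$, as recalled just before the corollary. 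Hence the intertwining operator corresponding to $i_P^G(\phi)$ is $\phi\circ J_{P|P}(\sigma_0)$, which is exactly the element $\phi\cdot J_{P|P}(\sigma_0)$ of the left $\End_{\fR[M]}(\sigma)$-orbit of $J_{P|P}(\sigma_0)$.

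Next I would apply $\sig_{PP}$. By Definition~\ref{def:signature}, $\sig_{PP}$ is $\fR$-linear and compatible with the left $\End_{\fR[M]}(\sigma)$-action in the sense recorded in the excerpt: $\sig_{PP}(\psi\cdot J)=\psi\circ\sig_{PP}(J)$. Therefore $\sig_{PP}(\phi\cdot J_{P|P}(\sigma_0))=\phi\circ\sig_{PP}(J_{P|P}(\sigma_0))=\phi\circ\mathrm{id}_\sigma=\phi$, using $\sig_{PP}(J_{P|P}(\sigma_0))=\mathrm{id}_\sigma$ from the paragraph preceding the statement. This proves the composition is the identity on $\End_{\fR[M]}(\sigma)$. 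Finally, Corollary~\ref{cor:sig_injective} gives injectivity of $\sig_{PP}$, and the computation above together with the surjectivity argument in the excerpt gives that $\sig_{PP}\circ(\text{Frob. rec.})$ is surjective; since it also has the section $i_P^G$, both arrows are isomorphisms and $i_P^G$ is inverse to the displayed composition, as claimed.

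I do not expect any serious obstacle here: the only mildly delicate point is bookkeeping the identification of $i_P^G(\phi)$ with $\phi\cdot J_{P|P}(\sigma_0)$ under Frobenius reciprocity, i.e. checking that post-composition with $\phi$ on $i_P^G(\sigma)$ corresponds to left-multiplication by $\phi$ on $r_P^Gi_P^G(\sigma)\to\sigma$. This is a direct naturality check for Frobenius reciprocity (both functors $i_P^G$ and $r_P^G$ being additive and the adjunction being natural in $\sigma$), so it is routine once spelled out; everything else is formal manipulation of the signature map's linearity properties already stated.
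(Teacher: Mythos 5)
Your proposal is correct and follows essentially the same route as the paper: identify $J_{P|P}(\sigma_0)$ with the identity under Frobenius reciprocity, use $\sig_{PP}(J_{P|P}(\sigma_0))=\mathrm{id}_\sigma$ together with the left $\End_{\fR[M]}(\sigma)$-equivariance of $\sig_{PP}$ to see that the composite is a retraction of $i_P^G$, and then invoke the injectivity of $\sig_{PP}$ (Corollary~\ref{cor:sig_injective}) to upgrade this to a two-sided inverse. The paper's argument is just a terser version of the same bookkeeping, so no discrepancies to report.
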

In fact, when $P\neq Q$ we can still recover such an isomorphism after performing a localization.

\subsection{Localizing and preserving congruences}\label{sec:localizing}

Since $\Hom_{\fR[M]}(r_Q^Gi_P^G(\sigma),\sigma)$ is an $\fR$-module, it gives rise to the sheaf of $\mathcal{O}_{\Spec(\fR)}$-modules $$\mathfrak{Int} : U \mapsto \Hom_{\fR[M]}(r_Q^Gi_P^G(\sigma),\sigma) \otimes_{\mathfrak{R}} \mathcal{O}_{\textup{Spec}(\mathfrak{R})}(U).$$
When $b \in \mathfrak{R}$, the sections over the principal open set $U_b = \textup{Spec}(\fR[1/b])$ are given, thanks to Proposition \ref{extension_of_scalars_flat_lam_prop}, by $\mathfrak{Int}(U_b) = \Hom_{\fR[1/b][M]}(r_Q^Gi_P^G(\sigma[1/b]),\sigma[1/b])$. We want to describe $\mathfrak{Int}$ on as large a subset of $\Spec(\fR)$ as possible. For this, we should look for a $J$ satisfying the following conditions:
\begin{enumerate}[(i)]
\item $\sig_{PQ}(J)$ acts on $\sigma$ by a ``scalar'' $b$ in $\fR$ (in particular it is in the center of $\End_{\fR[M]}(\sigma)$),
\item \label{b_preserve_congruences_cond_ii} $b$ is not a zero-divisor in $\fR$ (in particular, $b$ is nonzero). 
\end{enumerate}
Indeed, in this situation, the map $\sig_{PQ}$ would induce an isomorphism
$$\Hom_{\fR[1/b][M]}(r_Q^Gi_P^G(\sigma[1/b]),\sigma[1/b])\to \End_{\fR[1/b][M]}(\sigma[1/b]).$$ In geometric terms, $b$ not being a zero divisor implies (is equivalent to, if $\fR$ is reduced) that the principal open set $\Spec(\fR[1/b])$ is open and dense in $\Spec(\fR)$, so we would have described the sheaf $\Hom_{\fR[M]}(r_Q^Gi_P^G(\sigma),\sigma)$ on a large open subset of $\Spec(\fR)$.

In fact, we should ask for more than \ref{b_preserve_congruences_cond_ii}. The geometry of the Bernstein scheme $\Spec(\cZ_{A,M})$ encodes much of the structure of $\Rep_A(M)$, and we should ask that our families of intertwining operators vary across both $\Spec(\cZ_{A,M})$ and $\Spec(A[M/M^0])$ in a compatible way. Further, allowing $A$ to be a $\ZZ'$-algebra (instead of just a $\C$-algebra) enables us to consider congruences. In this setup, if $b$ were divisible by some prime $\ell\neq p$, passing to $\fR[1/b]$ would kill all congruence information at the mod $\ell$ fiber. Thus it would be particularly valuable to find a $J$ that preserves congruences in the sense of the following stronger condition
\begin{enumerate}
\item[(ii$'$)] For all $\mathfrak{p}\in \Spec(A)$, the image of $b$ in $\fR\otimes_AA/\mathfrak{p}$ is not a zero divisor (hence nonzero).
\end{enumerate}
In geometric terms, the open dense set $U_b = \textup{Spec}(\fR[1/b]) \to \textup{Spec}(\mathfrak{R})$ behaves nicely under pullback over $A$ in the sense that the pullback of $U_b \to \textup{Spec}(\mathfrak{R}) \to \textup{Spec}(A)$ along $\textup{Spec}(A/\mathfrak{p}) \to \textup{Spec}(A)$ gives an open dense set again $\textup{Spec}((\fR\otimes_AA/\mathfrak{p}) [1/b]) \to \textup{Spec}(\mathfrak{R}\otimes_AA/\mathfrak{p})$.

\begin{definition}
Let $\fS$ be the multiplicative subset of $\fR=\cZ_{A,M}[M/M^0]$ consisting of elements of the form $$c_d\overline{m}^d + c_{d-1}\overline{m}^{d-1} + \cdots + c_1\overline{m}+c_0,\ \ m\in M\cap G^0,\ c_i\in \cZ_{A,M}$$ such that both $c_d$ and $c_0$ are units in $\cZ_{A,M}$.
\end{definition}
\begin{obs}
Elements of $\fS$ satisfy condition \textup{(ii$'$)}. 
\end{obs}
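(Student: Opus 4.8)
The plan is to show that any element $s = c_d\overline{m}^d + \cdots + c_1\overline{m} + c_0$ of $\fS$, with $m \in M \cap G^0$ and $c_d, c_0 \in \cZ_{A,M}^{\times}$, has nonzero image in $\fR \otimes_A A/\mathfrak{p}$ for every prime $\mathfrak{p}$ of $A$, and moreover that this image is a non-zero-divisor. First I would observe that $\fR \otimes_A A/\mathfrak{p} = \cZ_{A,M}[M/M^0] \otimes_A A/\mathfrak{p}$, and that the ``constant'' ring $\cZ_{A,M}$ maps to some quotient ring $\overline{\cZ}$ (the relevant point being that the images $\overline{c_d}, \overline{c_0}$ of $c_d, c_0$ remain units, since units map to units under any ring homomorphism). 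So the question reduces to: is $\overline{c_d}\overline{m}^d + \cdots + \overline{c_0}$ a non-zero-divisor in $\overline{\cZ}[M/M^0]$, given $\overline{c_d}, \overline{c_0} \in \overline{\cZ}^{\times}$? Since units are preserved, it suffices to treat the universal case: a Laurent polynomial whose top and bottom coefficients are units, over an arbitrary commutative ring.

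The key structural step is that $\overline{m}$, being the image of an element of $M \cap G^0$, generates a free abelian subgroup of $(M \cap G^0)/M^0 \cong \ZZ^{r'}$ of rank at most one inside $M/M^0 \cong \ZZ^r$ — either $\overline{m} = 1$ (in which case $s$ has image $\overline{c_d + \cdots + c_0}$, but this case is actually excluded or trivial since then the ``polynomial'' is a unit only if... one should check: if $\overline{m}=1$ then $s = \sum c_i$, which need not be a unit; but then $d$ should be taken $0$ and $s = c_0$ is a unit — I would add a remark clarifying the $\overline{m}=1$ degenerate case collapses to $s \in \cZ_{A,M}^{\times}$), or $\overline{m}$ has infinite order. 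In the infinite-order case, choose a group complement so that $\ZZ[M/M^0] \cong \ZZ[\ZZ] \otimes \ZZ[\ZZ^{r-1}]$ with $\overline{m}$ a generator of the first factor; then $\overline{\cZ}[M/M^0] \cong C[t, t^{-1}]$ where $C = \overline{\cZ}[\ZZ^{r-1}]$ is again a commutative ring and $t = \overline{m}$. Clearing the unit $\overline{c_0}$, we are reduced to showing: a polynomial $f(t) = t^{-d}(c_d t^d + \cdots + c_0) \cdot$ (unit), equivalently a genuine polynomial $g(t) \in C[t]$ of degree $d$ with leading coefficient a unit and constant term a unit, is a non-zero-divisor in $C[t, t^{-1}]$.

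The heart of the argument is the standard fact that a polynomial with unit leading coefficient is a non-zero-divisor in $C[t]$ (if $h \in C[t]$ has $gh = 0$ with $g$ monic-up-to-unit of degree $d$, compare top degrees to force $h = 0$ by descending induction on $\deg h$); tensoring up, $g$ remains a non-zero-divisor in $C[t,t^{-1}] = C[t][t^{-1}]$ because localization is flat; and $t$ itself is a unit there, so $g(t)$ times any unit is still a non-zero-divisor. Pulling this back through the chain of identifications gives that the image of $s$ in $\fR \otimes_A A/\mathfrak{p}$ is a non-zero-divisor, in particular nonzero, which is exactly condition (ii$'$). The main obstacle I anticipate is purely bookkeeping: pinning down that $\overline{m} \in M/M^0$ really does sit in a free abelian group and splitting off a rank-one free direct summand containing it (this uses that a primitive vector in $\ZZ^r$, or an arbitrary nonzero vector after dividing by its content — but here one only needs that the cyclic subgroup it generates is a direct summand, which may fail; however one does not need a complement, only that $C[t,t^{-1}] \hookrightarrow \overline{\cZ}[M/M^0]$ via $t \mapsto \overline{m}$ makes the latter a free, hence faithfully flat, module over the former, which holds since $\overline{m}$ has infinite order so $\langle \overline{m}\rangle \cong \ZZ$ and $M/M^0$ is a free $\langle\overline{m}\rangle$-set after choosing coset representatives). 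Faithful flatness then lets one descend the non-zero-divisor property, completing the proof.
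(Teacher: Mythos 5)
The paper states this Observation without proof, so there is no argument of the authors' to compare against; your proposal supplies a correct one. Your route --- reduce mod $\mathfrak{p}$ to $\overline{\cZ}[M/M^0]$ with $\overline{\cZ}=\cZ_{A,M}\otimes_A A/\mathfrak{p}$ (units stay units under any ring map), observe that $\overline{\cZ}[M/M^0]$ is free, hence flat, over the Laurent subalgebra $\overline{\cZ}[\overline{m}^{\pm 1}]\cong \overline{\cZ}[t,t^{-1}]$ generated by the infinite-order element $\overline{m}$, and invoke the standard fact that a polynomial with unit leading coefficient is a non-zero-divisor in $\overline{\cZ}[t]$, hence in its localization --- is sound, and you were right to abandon the direct-summand decomposition, which indeed fails when $\overline{m}$ is imprimitive. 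The closest analogue in the paper is the unnamed lemma inside the proof of Theorem~\ref{thm:intertwining}, which proves the same kind of regularity statement by a weight argument on exponents (ordering monomials $X^{\vec f}$ by $\vec f\cdot\vec e$ where $\overline{m}=X^{\vec e}\neq 1$); that technique would prove the Observation directly without choosing coset representatives, but your freeness/flatness argument is an equally valid alternative. Three small corrections: (1) your last step is an \emph{extension} of the non-zero-divisor property along the flat inclusion $\overline{\cZ}[t,t^{-1}]\subset\overline{\cZ}[M/M^0]$, not a descent, and plain flatness (or just freeness and a coordinatewise computation on a coset basis) suffices --- faithful flatness is never needed; (2) for condition (ii$'$) a unit coefficient at one end of the polynomial already suffices, the two-sided condition in the definition of $\fS$ being needed elsewhere (e.g.\ for the degree additivity used in the section on $b$); (3) the degenerate case $m\in M^0$ that you flag is a defect of the paper's literal definition rather than of your proof: read literally, $\fS$ would contain $\overline{m}-1=0$ for $m\in M^0$, so the definition must be understood as requiring $\overline{m}\neq 1$ when $d\geq 1$ (as holds in every actual use, e.g.\ in the proof of Theorem~\ref{thm:image}, where $a_ww^{-1}a_w^{-1}w\notin M^0$), and with that reading your argument covers all cases.
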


Let $\fR[A_M]$ be the group ring of $A_M$, where $A_M$ is the split component of the center of $M$. The ideal
$$
\fI_{\sigma}^{QP} := \ker\bigg(\fR[A_M]\to \End_{\fR[M]}((r_Q^Gi_P^G/F_{QP}^{<1})(\sigma))\bigg)
$$
of $\fR[A_M]$ was considered in \cite[Sec 2.9]{dat} and \cite{dhkm_conjecture}[App B] (take ``$R$'' in \cite{dat, dhkm_conjecture} to be our $\fR$) and provides a source of intertwining operators. Indeed, any element $\beta$ of $\fI_\sigma^{QP}$ induces a homomorphism $$r_Q^Gi_P^G(\sigma) \to F_{QP}^{<1}(\sigma) \to I_1(\sigma)$$ and by composition a map
\begin{align*}
r_Q^Gi_P^G(\sigma)\to I_1(\sigma)&\to \sigma\\
\overline{f}  \mapsto \beta\overline{f} &\mapsto \int_{U_Q\cap U_{\overline{P}}}\beta\overline{f}(u)du\ ,
\end{align*}
where $\overline{f}$ denotes the image of an element $f$ under $i_P^G(\sigma)\to r_Q^Gi_P^G(\sigma)$. While $\beta \overline{f}$ is not, strictly speaking, a function on $G$, we can lift it to $\widetilde{F}_{QP}^{<1}(\sigma)$ and then take the integral, which factors through $I_1(\sigma)$ and does not depend on the choice of lift. Thus we get an element of $\Hom_{\fR[M]}(r_Q^Gi_P^G(\sigma),\sigma)$, which we will call $J_{\beta}$. 
\begin{thm}\label{thm:image} 
There exists $\beta \in \fI_\sigma^{QP}$ such that $\sig_{PQ}(J_\beta)$ acts on $\sigma$ by a scalar $b$ in the multiplicative set $\fS$.
\end{thm}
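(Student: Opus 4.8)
The plan is to analyze the action of the group ring $\fR[A_M]$ on $I_1(\sigma)\cong\sigma$ via the composition $r_Q^Gi_P^G(\sigma)\to I_1(\sigma)\to\sigma$, and to produce an explicit element $\beta$ whose signature is the desired scalar in $\fS$. The key geometric input is the factorization of the Bruhat ordering: since we may reduce (via Proposition~\ref{prop:factorization_of_j}, or directly) to the case where $M$ is a maximal Levi so that $(M\cap G^0)/M^0$ has rank one, the relevant intertwining behavior is governed by a single coroot direction. Concretely, for $z\in A_M$ mapping to a generator $\overline{z}$ of the rank-one quotient, I would compute how $z$ acts on the subquotients $I_w(\sigma)$ of the geometric lemma filtration. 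On $I_1(\sigma)\cong\sigma$, the element $z$ acts by $\Psi(z)=\overline{z}$ (by the definition of the universal twist), whereas on $I_w(\sigma)\cong i^M_{M\cap w^{-1}P}\circ\dot w\circ r^M_{wQ\cap M}(\sigma)$ for $w\neq 1$, the central character is twisted by $w$, so $z$ acts by $\overline{w(z)}$, which differs from $\overline{z}$.

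First I would make precise the claim that the quotient $(r_Q^Gi_P^G/F_{QP}^{<1})(\sigma)$, on which $\fI_\sigma^{QP}$ is defined as a kernel, has all its $A_M$-eigenvalues among the $\overline{w(z)}$ for $w\neq 1$; by Lemma~\ref{lem:finiteness} this module is admissible over $\cZ_{A,M}$, so after inverting an appropriate element its $A_M$-action is "integral" and we can bound the minimal polynomial of $z$ on it. That minimal polynomial, call it $P_0(X)\in\fR[X]$, then lies in $\fI_\sigma^{QP}$ when evaluated at $\overline{z}$ — that is, $\beta:=P_0(\overline{z})\cdot(\text{something})$ annihilates the quotient — and hence induces an intertwining operator $J_\beta$. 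Second, I would compute $\sig_{PQ}(J_\beta)$: since $\beta$ acts on $I_1(\sigma)\cong\sigma$ by evaluating $P_0$ at the scalar $\overline{z}$, the signature is $b=P_0(\overline{z})\in\fR$, up to the normalizing integral $\int_{U_Q\cap U_{\overline P}}du$ which contributes a unit (recall $\ZZ'=\ZZ[\sqrt q^{-1}]$, so volumes involving powers of $q$ are invertible). Third, I would verify $b\in\fS$: the constant term of $P_0$ (the product of the $\overline{w(z)}$ over $w\neq 1$, up to sign) and the leading term are products of elements of the form $\overline{w(z)}$, each a unit in $\fR$ since $w(z)\in M\cap G^0$ and $\overline{w(z)}$ is a unit times a monomial; collecting, $b$ has the shape $c_d\overline{m}^d+\cdots+c_0$ with $c_d,c_0\in\cZ_{A,M}^\times$ after rescaling by the monomial $\overline{z}^{-(\text{min degree})}$, which is exactly membership in $\fS$.

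The main obstacle I anticipate is making rigorous the claim that $z$ acts with controlled, invertible "eigenvalues" on the quotient $(r_Q^Gi_P^G/F_{QP}^{<1})(\sigma)$ when $\fR$ is not a domain and $\sigma$ need not have finite length — this is precisely why the paper works over $\cZ_{A,M}$ rather than over a field. The resolution should come from combining admissibility over $\cZ_{A,M}$ (Lemma~\ref{lem:finiteness}) with the explicit description of the subquotients $I_w$ and their central characters: one shows that on each $I_w(\sigma)$, $w\neq 1$, the difference $z - \overline{w(z)}\cdot\mathrm{id}$ is either zero or becomes invertible after localizing, so that a single polynomial relation holds on the finitely many relevant subquotients simultaneously. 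A secondary subtlety is ensuring $\beta$ can be chosen in the commutative ring $\fR[A_M]$ (not merely in $\End$), so that its action is genuinely by a scalar in $\fR$ on $I_1(\sigma)$; this is automatic from the construction since $\beta$ is a polynomial in a single $\overline{z}$. Finally, tracking the precise power of $q$ (equivalently $\sqrt q$) in the Haar-measure normalization of the $I_1\to\sigma$ integral is a bookkeeping point, harmless over $\ZZ'$ but worth stating.
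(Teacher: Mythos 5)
Your overall strategy (kill the $w\neq 1$ subquotients with an element of the group ring of $A_M$ and read off its value on $I_1(\sigma)$) is the right one, but two steps at the heart of your sketch do not hold up. First, the proposed reduction to a rank-one/maximal-Levi situation is circular: Proposition~\ref{prop:factorization_of_j} and the multiplicativity/induction compatibilities (Lemmas~\ref{lem:multiplicativity_of_intertwiners}, \ref{lem:factorization_of_intertwiners}) are statements about the canonical operators $J_{Q|P}(\sigma_0)$, whose very existence rests on Theorem~\ref{thm:image}; no such reduction is available at this stage, and the paper's proof works directly with all double cosets $w\neq 1$ in $W_M\backslash W_G/W_M$ (even for adjacent parabolics there can be several). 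Relatedly, a single element $z\in A_M$ need not satisfy the noncompactness condition for every $w\neq 1$ simultaneously; the paper chooses, for each $w\neq 1$, an element $a_w\in A_M$ with $a_ww^{-1}a_w^{-1}w\notin M^0$ and takes $\beta$ to be a \emph{product} over $w\neq 1$ of factors built from the various $a_w$.

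Second, and more seriously, the step producing the annihilating polynomial is not correct as stated. On $I_w(\sigma)$ with $w\neq 1$, the element $z$ does not act by the scalar $\overline{w(z)}$: $\sigma_0$ is merely a finitely generated $A[M]$-module, so it has no central character, and the action of $z$ is $r_z\otimes\overline{w^{-1}zw}$ where $r_z$ is an invertible (but in general non-scalar) endomorphism of the restricted $\sigma_0$; likewise on $I_1(\sigma)\cong\sigma$ the action is $z_z\otimes\overline{z}$ with $z_z\in\cZ_{A,M}$, not $\overline{z}$. Consequently there is no ``minimal polynomial of $z$ with eigenvalues $\overline{w(z)}$'' to speak of, and your proposed repair --- that $z-\overline{w(z)}\,\mathrm{id}$ is zero or invertible after localizing --- is neither true in this generality nor admissible: any auxiliary localization performed before exhibiting $b$ would change the statement, since the whole point is that $b$ lies in $\fS$ so that only $\fS$ is inverted and congruences are preserved. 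The paper's resolution is to separate the two tensor factors: by admissibility of $r^M_{M\cap w^{-1}Q}(\sigma_0)$ over $\cZ_{A,M}$ (Lemma~\ref{lem:finiteness}) and Cayley--Hamilton, the invertible endomorphism $\phi_w(a_w)$ of the \emph{untwisted} module is killed by a monic polynomial $f_{w,a_w}\in\cZ_{A,M}[X]$ with unit constant term (Lemma~\ref{lemma:finiteness_polynomial}); twisting its coefficients by powers of $\overline{w^{-1}a_ww}^{\,-1}$ gives $F_{w,a_w}$ killing the action on the twisted module $\sigma$ (Lemma~\ref{lemma:twisted_finiteness_polynomial}); and evaluating at $w=1$ turns $F_{w,a_w}(a_w)$ into a polynomial in the monomial $\overline{a_ww^{-1}a_w^{-1}w}$ with unit leading and constant coefficients, which lies in $\fS$ precisely because $a_ww^{-1}a_w^{-1}w\in M\cap G^0$ is noncompact. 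Your sketch identifies the correct obstacle but does not supply this mechanism, so as written the proof has a genuine gap.
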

The proof of Theorem~\ref{thm:image} appears in Section~\ref{sec:proofs} below. For now, we take it for granted and use it to define intertwining operators.

\begin{corollary}\label{cor:isomorphism} Let $b\in \fS$ be as in Theorem~\ref{thm:image}. After localization, the $(\sigma,P,Q)$-signature defines an isomorphism
$$\sig_{PQ}:\Hom_{\fR[1/b][M]}(r_Q^Gi_P^G(\sigma[1/b]),\sigma[1/b])\overset{\sim}{\to} \End_{\fR[1/b][M]}(\sigma[1/b]).$$

\end{corollary}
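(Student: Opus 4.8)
The plan is to localize at $b \in \fS$ and then deduce that $\sig_{PQ}$ becomes surjective (injectivity is already known from Corollary~\ref{cor:sig_injective}, which survives localization). First I would observe that localization at $b$ is flat, so by Proposition~\ref{extension_of_scalars_flat_lam_prop} (applied to the finitely presented object $r_Q^Gi_P^G(\sigma)$, which is finitely generated since $\sigma_0$ is, hence finitely presented as $\fR$ is obtained from a Noetherian ring) we have a natural identification $$\Hom_{\fR[M]}(r_Q^Gi_P^G(\sigma),\sigma)\otimes_{\fR}\fR[1/b] \cong \Hom_{\fR[1/b][M]}(r_Q^Gi_P^G(\sigma[1/b]),\sigma[1/b]),$$ and similarly $\End_{\fR[M]}(\sigma)\otimes_{\fR}\fR[1/b]\cong \End_{\fR[1/b][M]}(\sigma[1/b])$. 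Thus $\sig_{PQ}$ after localization is simply $\sig_{PQ}\otimes_{\fR}\fR[1/b]$, which is still injective because localization is exact.

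For surjectivity, the key point is the left-module structure: $\End_{\fR[M]}(\sigma)$ acts on $\Hom_{\fR[M]}(r_Q^Gi_P^G(\sigma),\sigma)$ by post-composition, and $\sig_{PQ}$ is equivariant for this action (as noted in the discussion following Corollary~\ref{cor:sig_injective}). Theorem~\ref{thm:image} produces $J_\beta$ with $\sig_{PQ}(J_\beta) = b\cdot\text{id}_\sigma$, where $b \in \fS \subset \fR^\times$ after inverting $b$ — here I use that $b$, being a scalar in $\fR$, becomes a unit in $\fR[1/b]$. Then for any $\psi \in \End_{\fR[1/b][M]}(\sigma[1/b])$, the operator $b^{-1}\psi \cdot J_\beta \in \Hom_{\fR[1/b][M]}(r_Q^Gi_P^G(\sigma[1/b]),\sigma[1/b])$ has signature $b^{-1}\psi\cdot(b\cdot\text{id}_\sigma) = \psi$. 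This shows $\sig_{PQ}$ is surjective after localization, completing the proof that it is an isomorphism.

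I do not expect a serious obstacle here; the corollary is essentially bookkeeping given Theorems~\ref{thm:intertwining} and~\ref{thm:image}. The one point requiring a little care is the compatibility of $\sig_{PQ}$ with localization, i.e., checking that the localized signature map agrees with the signature map defined directly over $\fR[1/b]$ — this follows because the filtration $F_{QP}^{<1} \subset r_Q^Gi_P^G$ and the isomorphism $I_1 \cong \text{id}$ are built from functorial constructions (support conditions and an integration map) that commute with flat base change, so the whole construction of Definition~\ref{def:signature} is compatible with extension of scalars along $\fR \to \fR[1/b]$. One should also note that $b$ being a non-zero-divisor (indeed satisfying condition (ii$'$), by the Observation that elements of $\fS$ do) guarantees $\Spec(\fR[1/b])$ is dense, so this isomorphism genuinely describes $\mathfrak{Int}$ on a large open subset, though density is not logically needed for the statement of the corollary itself.
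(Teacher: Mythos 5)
Your proof is correct and follows essentially the same route the paper takes: it identifies the localized Hom-spaces via Proposition~\ref{extension_of_scalars_flat_lam_prop}, keeps injectivity from Corollary~\ref{cor:sig_injective}, and gets surjectivity from the $\End_{\fR[M]}(\sigma)$-equivariance of $\sig_{PQ}$ once $\sig_{PQ}(J_\beta)=b$ becomes a unit in $\fR[1/b]$. Nothing further is needed.
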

The isomorphism of Corollary~\ref{cor:isomorphism} will allow us to define the canonical intertwining operator, however, to make such a definition we would like a framework that is independent of the choice of $\beta$ or of $b$. We accomplish this by passing from $\fR[1/b]$ to the fraction ring $\fS^{-1}\fR$, to get an isomorphism:
$$\sig_{PQ}:\Hom_{\fS^{-1}\fR[M]}(r_Q^Gi_P^G(\fS^{-1}\sigma),\fS^{-1}\sigma)\overset{\sim}{\to} \End_{\fS^{-1}\fR[M]}(\fS^{-1}\sigma).$$
\begin{definition}
The canonical intertwining operator, denoted $J_{Q|P}(\sigma_0)$, is the unique element of $\Hom_{\fS^{-1}\fR[G]}(i_P^G(\fS^{-1}\sigma),i_Q^G(\fS^{-1}\sigma))$ whose image in
$$\Hom_{\fS^{-1}\fR[M]}(r_Q^Gi_P^G(\fS^{-1}\sigma),\fS^{-1}\sigma)\overset{\sig_{PQ}}{\to}\End_{\fS^{-1}\fR[M]}(\fS^{-1}\sigma)$$ is the identity.
\end{definition}

\begin{rem}\label{rem:terminology}
Let $\beta$, $J_{\beta}$ and $b$ be as in Theorem~\ref{thm:image}. Let $\fI_{\sigma}$ be the ideal of $\fR[A_M]$ that acts by $0$ on $\sigma$. Since $$b = (b-\beta) + \beta \in \fI_{\sigma} + \fI_{\sigma}^{QP},$$ and $b$ is a unit in $\fS^{-1}\fR$, one would say in the terminology of \cite{dat}[Sec 2] that $\fS^{-1}\sigma$ is $(P,Q)$-regular as a $\fS^{-1}\fR[M]$-module. Or, in the terminology of \cite[Appendix B]{dhkm_conjecture}, one could say that $b$ is $(\sigma,P,Q)$-singular when $\sigma$ is considered as an $\fR[M]$-module (c.f. \cite[Lem B.3]{dhkm_conjecture}). 
\end{rem}
\subsection{Classical interpretation}\label{subsec:classicalinterpretation}
We can unwind the constructions above to characterize $J_{Q|P}(\sigma_0)$ in terms of the familiar definition of intertwining operators via convergent integrals. Let $\beta$, $J_{\beta}$ and $b$ be as in Theorem~\ref{thm:image}. Restricting $\beta$ to the submodule $F_{QP}^{<1}(\sigma)\subset r_Q^Gi_P^G(\sigma)$ gives a homomorphism
$$F_{QP}^{<1}(\sigma)\to I_1(\sigma),$$ which by Theorem~\ref{thm:intertwining} is zero on $F_{QP}^{\leq 1}(\sigma)$ and defines an element of $\End_{\fR[M]}(I_1(\sigma))\cong \End_{\fR[M]}(\sigma)$. The action of $A_M$ on $\sigma$ is via the map
\begin{align*}
A_M&\to \fR[M]\\
a &\mapsto z_a\cdot a
\end{align*} where $z_a$ is the element of $\cZ_{A,M}$ defined by $a$. The element $\beta\in \fR[A_M]$ acts on $\sigma$ by a ``scalar'' $b$ in the multiplicative set $\fS\subset\fR$. The element $J_{\beta}\in \Hom_{\fR[M]}(r_Q^Gi_P^G(\sigma),\sigma)$ is defined by
$$\overline{f}\mapsto \int_{U_Q\cap U_{\overline{P}}}(\beta \overline{f})(u)du\ ,\ \text{$\overline{f}\in r_Q^Gi_P^G(\sigma),$}$$ and $\frac{1}{b}\sig_{PQ}(J_{\beta})$ is the identity in $\End_{\fS^{-1}\fR[M]}(\fS^{-1}\sigma)$.

Translating through Frobenius reciprocity, we get
$$J_{Q|P}(\sigma_0)(f)(g) = \frac{1}{b}\int_{U_Q\cap U_{\overline{P}}}\beta(\overline{gf})(u)du,\ \ f\in i_P^G(\mathfrak{S}^{-1} \sigma).$$ If we start with $f\in \widetilde{F}_{QP}^{<1}(\sigma)$ then, because $\beta$ and $b$ coincide on $I_1(\sigma)$, we can cancel and simplify the expression to $$J_{Q|P}(\sigma_0)(f)(g) = \int_{U_Q\cap U_{\overline{P}}}f(ug)du.$$

\begin{corollary}\label{cor:characterization}
The operator $J_{Q|P}(\sigma_0)$ is the unique $\fS^{-1}\fR[G]$-equivariant homomorphism $$i_P^G(\fS^{-1}\sigma)\to i_Q^G(\fS^{-1}\sigma)$$ satisfying $J_{Q|P}(\sigma_0)(f)(1) = \int_{U_Q\cap U_{\overline{P}}}f(u)du$ for $f\in \widetilde{F}_{QP}^{<1}(\fS^{-1}\sigma)$.
\end{corollary}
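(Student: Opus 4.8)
The plan is to treat existence and uniqueness separately; existence is essentially already established.

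For existence, I would specialize to $g=1$ the identity derived at the end of Subsection~\ref{subsec:classicalinterpretation}, which reads $J_{Q|P}(\sigma_0)(f)(1)=\int_{U_Q\cap U_{\overline{P}}}f(u)\,du$ for $f\in\widetilde{F}_{QP}^{<1}(\fS^{-1}\sigma)$. (Unwinding the definitions: with $\beta$, $J_\beta$, $b$ as in Theorem~\ref{thm:image}, the image $\overline f$ of $f$ in $r_Q^Gi_P^G(\fS^{-1}\sigma)$ lies in $F_{QP}^{<1}$; since $\beta$ acts on $I_1(\fS^{-1}\sigma)$ by the scalar $b$ and the isomorphism $I_1(\fS^{-1}\sigma)\overset{\sim}{\to}\fS^{-1}\sigma$ sends the class of $\overline f$ to $\int_{U_Q\cap U_{\overline{P}}}f(u)\,du$, and since $J_{Q|P}(\sigma_0)$ corresponds under Frobenius reciprocity to $\frac{1}{b}J_\beta$, the identity follows.)

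For uniqueness, I would take any $\Phi\in\Hom_{\fS^{-1}\fR[G]}(i_P^G(\fS^{-1}\sigma),i_Q^G(\fS^{-1}\sigma))$ with $\Phi(f)(1)=\int_{U_Q\cap U_{\overline{P}}}f(u)\,du$ for all $f\in\widetilde{F}_{QP}^{<1}(\fS^{-1}\sigma)$, put $\Psi=\Phi-J_{Q|P}(\sigma_0)$ so that $\Psi(f)(1)=0$ for all such $f$, and transport $\Psi$ to $\overline\Psi\in\Hom_{\fS^{-1}\fR[M]}(r_Q^Gi_P^G(\fS^{-1}\sigma),\fS^{-1}\sigma)$ through Frobenius reciprocity, so $\overline\Psi(\overline f)=\Psi(f)(1)$ with $\overline f$ the image of $f$ in $r_Q^Gi_P^G$. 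The key point is that $f\mapsto\overline f$ carries $\widetilde{F}_{QP}^{<1}(\fS^{-1}\sigma)$ onto $F_{QP}^{<1}(\fS^{-1}\sigma)$ by the very definition of the latter, so the hypothesis on $\Psi$ is precisely that $\overline\Psi$ vanishes on $F_{QP}^{<1}(\fS^{-1}\sigma)$. Since $\overline\Psi$ automatically kills $F_{QP}^{\leq 1}(\fS^{-1}\sigma)$ (Theorem~\ref{thm:intertwining}, applied over $\fS^{-1}\fR$ by flat base change), the endomorphism of $\fS^{-1}\sigma\cong I_1(\fS^{-1}\sigma)$ it induces, which is $\sig_{PQ}(\overline\Psi)$ by Definition~\ref{def:signature}, is zero. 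As $\sig_{PQ}$ is injective over $\fS^{-1}\fR$ (in fact an isomorphism, by Corollary~\ref{cor:isomorphism} and the discussion following it), this forces $\overline\Psi=0$, hence $\Psi=0$ and $\Phi=J_{Q|P}(\sigma_0)$.

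I do not expect a real obstacle here; the only delicate point is the normalization bookkeeping, i.e. checking that the Frobenius reciprocity isomorphism $\Phi\mapsto(\overline f\mapsto\Phi(f)(1))$, the normalization of $I_1\overset{\sim}{\to}\text{id}$ by $f\mapsto\int_{U_Q\cap U_{\overline{P}}}f(u)\,du$, and the recipe for $\sig_{PQ}$ in Definition~\ref{def:signature} are mutually consistent, so that ``$\overline\Psi$ vanishes on $F_{QP}^{<1}$'' is literally the statement ``$\sig_{PQ}(\overline\Psi)=0$''. This has effectively been arranged in Subsections~\ref{subsec:geometric} and~\ref{subsec:classicalinterpretation}, so it only requires unwinding definitions.
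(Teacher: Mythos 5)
Your argument is correct, but it follows a different route than the paper. The paper disposes of this corollary in two lines: it observes (Remark~\ref{rem:terminology}) that $\fS^{-1}\sigma$ is $(P,Q)$-regular as an $\fS^{-1}\fR[M]$-module in the sense of Dat, and then quotes the formalism of \cite[Lem 7.12]{dat}, which yields exactly this characterization for regular modules. You instead re-derive the statement internally: existence by specializing the explicit formula from \ref{subsec:classicalinterpretation} at $g=1$ (your unwinding of $\beta$, $b$, and the normalization of $I_1\overset{\sim}{\to}\mathrm{id}$ is accurate, and $J_{Q|P}(\sigma_0)$ does correspond to $\tfrac{1}{b}J_\beta$ under Frobenius reciprocity since $\sig_{PQ}(\tfrac1b J_\beta)=\mathrm{id}$); uniqueness by noting that the hypothesis pins down the Frobenius-reciprocity transform of any candidate on $F_{QP}^{<1}(\fS^{-1}\sigma)$, since that submodule is by definition the image of $\widetilde F_{QP}^{<1}(\fS^{-1}\sigma)$, so the difference has vanishing $(\sigma,P,Q)$-signature and is killed by the injectivity of $\sig_{PQ}$ over $\fS^{-1}\fR$ (which the paper has in hand, as the stronger isomorphism used to define $J_{Q|P}(\sigma_0)$). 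In effect you are reproving the relevant content of Dat's Lemma 7.12 using the paper's own signature machinery (Theorem~\ref{thm:intertwining}, Corollaries~\ref{cor:sig_injective} and~\ref{cor:isomorphism}); what this buys is a self-contained proof that never leaves the paper, at the cost of repeating an argument the authors preferred to outsource. The only point to be slightly careful about, as you note, is that Theorem~\ref{thm:intertwining} and the injectivity of $\sig_{PQ}$ are stated over $\fR$ and must be transported to $\fS^{-1}\fR$; this is legitimate by flatness of localization together with Proposition~\ref{extension_of_scalars_flat_lam_prop} (finite presentation coming from Theorem~\ref{finiteness_dhkm_thm}), and is in any case exactly the identification the paper itself makes when it asserts the isomorphism $\sig_{PQ}$ over $\fS^{-1}\fR$ before defining $J_{Q|P}(\sigma_0)$.
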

\begin{proof}
By Remark~\ref{rem:terminology} above, we have shown that $\fS^{-1}\sigma$ is $(P,Q)$-regular as a $\fS^{-1}\fR[M]$-module, in the terminology of \cite{dat}. The corollary then follows from the formalism of loc. cit., specifically \cite[Lem 7.12]{dat}.
\end{proof}

\subsection{Compatibility of intertwining operators with subquotients and scalar extension
}\label{sec:compatibility_of_intertwiningoperators}
Given $A\to A'$ a homomorphism of commutative rings, the forgetful functor induces a natural map $\cZ_{A,M}\to \cZ_{A',M}$, which factors through $\cZ_{A,M}\to \cZ_{A,M}\otimes_AA'$. Let $f:R\to R'$ (resp. $\mathfrak{f}:\fR\to \fR'$) be the corresponding induced homomorphisms $A[M/M^0]\to A'[M/M^0]$ (resp. $\cZ_{A,M}[M/M^0]\to \cZ_{A',M}[M/M^0]$). The image of the multiplicative set $\fS$ under $\mathfrak{f}$ is contained in the multiplicative set $\fS'$ of $\fR'$ with the analogous definition.

\begin{prop}\label{prop:compatibility}
\begin{enumerate}
\item Let $q_0:\sigma_0\to \sigma_0'$ be a homomorphism of finitely generated $A[M]$-modules, let $q:\fS^{-1}\sigma\to \fS^{-1}\sigma'$ be the induced homomorphism of $\fS^{-1} \mathfrak{R}[M]$-modules. The following diagram commutes.
$$
\begin{tikzcd}
i_P^G (\fS^{-1}\sigma) \arrow[r,"J_{Q|P}(\sigma_0)"] \arrow[d,"i_P^G(q)", rightarrow]& i_{Q}^{G}(\fS^{-1}\sigma) \arrow[d,"i_P^G(q)",rightarrow]\\
i_P^G (\fS^{-1}\sigma') \arrow[r,"J_{Q|P}(\sigma_0')"] & i_{Q}^{G}(\fS^{-1}\sigma')
\end{tikzcd}
$$
\item Let $\sigma_{0,A'} = \sigma_0\otimes_AA'$ and $\sigma_{R'}=\sigma\otimes_RR'$ and let $\iota:\fS^{-1}\sigma\to {\fS'}^{-1}\sigma_{R'}$ denote the map on localizations induced by $\textup{id}\otimes 1:\sigma\to \sigma_{R'}$. The following diagram commutes
$$
\begin{tikzcd}
i_P^G (\fS^{-1}\sigma) \arrow[r,"J_{Q|P}(\sigma_0)"] \arrow[d,"i_P^G(\iota)"]& i_{Q}^{G}(\fS^{-1}\sigma) \arrow[d,"i_Q^G(\iota)"]\\
i_P^G ({\fS'}^{-1}\sigma_{R'}) \arrow[r,"J_{Q|P}(\sigma_{0,A'})"] & i_{Q}^{G}({\fS'}^{-1}\sigma_{R'})
\end{tikzcd}$$
\end{enumerate}
\end{prop}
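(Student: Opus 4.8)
The plan is to reduce everything to the characterization of $J_{Q|P}$ via its behaviour on the subfunctor $\widetilde{F}_{QP}^{<1}$ given in Corollary~\ref{cor:characterization}. Recall that $J_{Q|P}(\sigma_0)$ is the unique $\fS^{-1}\fR[G]$-equivariant map $i_P^G(\fS^{-1}\sigma)\to i_Q^G(\fS^{-1}\sigma)$ satisfying $J_{Q|P}(\sigma_0)(f)(1)=\int_{U_Q\cap U_{\overline P}}f(u)du$ for $f\in \widetilde{F}_{QP}^{<1}(\fS^{-1}\sigma)$; the uniqueness here follows from the injectivity of $\sig_{PQ}$ (Corollary~\ref{cor:sig_injective}), which persists after localization since $\fR\to\fS^{-1}\fR$ is flat and the modules in sight are finitely presented (Proposition~\ref{extension_of_scalars_flat_lam_prop} together with the finiteness of Hecke algebras). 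So for each part it suffices to show that the composite going around the square one way agrees with $J_{Q|P}$ of the target object when precomposed with the inclusion of $\widetilde{F}_{QP}^{<1}$ of the source, i.e. that the square commutes after restricting to that subfunctor and after evaluating at $1\in G$. Equivalently, by Frobenius reciprocity, we check commutativity of the corresponding squares of maps $r_Q^Gi_P^G(-)\to(-)$ on the subfunctor $F_{QP}^{<1}$.

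For part (i): both $i_P^G$ and $r_Q^G$ are functors, so $i_P^G(q)$ and the induced map on parabolic restrictions are morphisms of functors; in particular the formation of the geometric lemma filtration, and of the subfunctors $\widetilde{F}_{QP}^{<1}$ and $F_{QP}^{<1}$, is natural in $\sigma$. Chasing an element $f\in \widetilde{F}_{QP}^{<1}(\fS^{-1}\sigma)$: pushing it down via $i_P^G(q)$ lands in $\widetilde{F}_{QP}^{<1}(\fS^{-1}\sigma')$ because $q$ is a map of coefficient modules and does not enlarge supports; then applying $J_{Q|P}(\sigma_0')$ and evaluating at $1$ gives $\int q(f(u))\,du = q\big(\int f(u)\,du\big)$ since $q$ is $\fS^{-1}\fR$-linear and commutes with the (locally constant, compactly supported) integral; the other route gives $i_Q^G(q)\big(J_{Q|P}(\sigma_0)(f)\big)$, which evaluated at $1$ is exactly $q\big(\int f(u)\,du\big)$. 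Both composites are $\fS^{-1}\fR[G]$-equivariant, so by the uniqueness in Corollary~\ref{cor:characterization} (applied to the object $\sigma_0'$, with the composite $i_Q^G(q)\circ J_{Q|P}(\sigma_0)\circ i_P^G(q)$ playing the role of a candidate for... — more precisely, after observing that both composites $i_P^G(\fS^{-1}\sigma)\to i_Q^G(\fS^{-1}\sigma')$ agree on $\widetilde{F}_{QP}^{<1}$, factor $i_P^G(q)$ off on the right to compare $i_Q^G(q)\circ J_{Q|P}(\sigma_0)$ with $J_{Q|P}(\sigma_0')\circ i_P^G(q)$ as maps out of $i_P^G(\fS^{-1}\sigma)$; both are the unique equivariant extension of their common restriction). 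Hence they coincide.

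For part (ii): here the subtlety is that we are changing \emph{both} the acting ring and the multiplicative set, so one first checks that the image of $\widetilde F_{QP}^{<1}(\fS^{-1}\sigma)$ under $i_P^G(\iota)$ lands in $\widetilde F_{QP}^{<1}({\fS'}^{-1}\sigma_{R'})$ — this is again a support statement, unaffected by scalar extension, combined with the fact that $\mathfrak f$ sends $\fS$ into $\fS'$. The geometric lemma filtration and the isomorphism $I_1\overset{\sim}{\to}\mathrm{id}$ are defined by integration formulas with $\ZZ[1/p]$-coefficients, hence are manifestly compatible with base change along $A\to A'$; so the defining integral formula for $J_{Q|P}(\sigma_{0,A'})$ pulls back to that of $J_{Q|P}(\sigma_0)$. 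Concretely, for $f\in\widetilde F_{QP}^{<1}(\fS^{-1}\sigma)$ we get $J_{Q|P}(\sigma_{0,A'})(i_P^G(\iota)f)(1)=\int (\iota\circ f)(u)\,du=\iota\big(\int f(u)\,du\big)=\big(i_Q^G(\iota)\circ J_{Q|P}(\sigma_0)(f)\big)(1)$, using that $\iota$ is additive and continuous and commutes with the Haar integral. Both composites $i_P^G(\fS^{-1}\sigma)\to i_Q^G({\fS'}^{-1}\sigma_{R'})$ are ${\fS'}^{-1}\fR'[G]$-equivariant — on the left side one uses that $i_P^G(\iota)$ is equivariant for the $\fS^{-1}\fR$-action and then extends scalars — and they agree on $\widetilde F_{QP}^{<1}(\fS^{-1}\sigma)$, whose image generates the target as an ${\fS'}^{-1}\fR'[G]$-module in the appropriate sense (equivalently, one invokes the localized injectivity of $\sig_{P'Q'}$ over ${\fS'}^{-1}\fR'$ after extending scalars). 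Therefore the two composites coincide.

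\textbf{Main obstacle.} The routine part is the manipulation of the integral (linearity and base-change compatibility of $\int_{U_Q\cap U_{\overline P}}$, which is a finite sum on any given $f$). The genuine point to get right is the uniqueness/rigidity input: one must argue carefully that agreement on the single subfunctor $\widetilde F_{QP}^{<1}$ after evaluation at $1$ forces agreement of the full $G$-equivariant maps — this uses injectivity of $\sig_{PQ}$, and in part (ii) it requires checking that this injectivity survives the simultaneous change of ring $\fR\to\fR'$ and localization $\fS\rightsquigarrow\fS'$, i.e. that $\sig_{P'Q'}$ over ${\fS'}^{-1}\fR'$ computes the signature of the base-changed operator. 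I expect this bookkeeping — tracking which ring each Hom-space is taken over and invoking Proposition~\ref{extension_of_scalars_flat_lam_prop} / Proposition~\ref{prop:free-extension-scalars-hom-fg} at the right moments — to be the only place where care is needed; everything else is formal naturality of the geometric lemma.
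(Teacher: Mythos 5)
Your proposal is correct and follows essentially the same route as the paper, whose entire proof is the one-line observation that the integral in Corollary~\ref{cor:characterization} is a finite sum on $\widetilde{F}_{QP}^{<1}(\fS^{-1}\sigma)$ and commutes with homomorphisms $\sigma\to\sigma'$ and with scalar extension. The only caveat is that the rigidity step you flag (agreement on $\widetilde{F}_{QP}^{<1}$ after evaluation at $1$ determines the $G$-map) really needs the two-module variant $\Hom_{\fR[M]}(I_w(\sigma),\sigma')=0$ for $w\neq 1$ rather than literally Corollary~\ref{cor:sig_injective}; this follows from the proof of Theorem~\ref{thm:intertwining} verbatim, and the paper leaves it just as implicit as you do.
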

\begin{proof}
The integral appearing in Corollary~\ref{cor:characterization} is a finite sum when restricted to $\widetilde{F}_{QP}^{<1}(\fS^{-1}\sigma)$ and commutes with homomorphisms $\sigma\to\sigma'$ and with scalar extension.  
\end{proof}

Let $\Rep_A^{\text{f.g.}}(M)$ denote the category of finitely generated smooth $A[M]$-modules. Consider the following functor
\begin{align*}
\fri_P^G: \Rep_A^{\text{f.g.}}(M)&\to \Rep_{\fS^{-1}\fR}(G)\\
\sigma_0 &\to i_P^G(\fS^{-1}\sigma)\ .
\end{align*}
Since taking universal unramified twist, parabolic induction, and localization are all exact functors, we observe that $\fri_P^G$ is exact. In this language, Proposition~\ref{prop:compatibility}(i) says that $J_{Q|P}$ defines a natural transformation between exact functors:
$$J_{Q|P}:\fri_P^G\to \fri_Q^G.$$

The same proof as Proposition~\ref{prop:compatibility} also implies the intertwining operators satisfy compatibility with specialization at points of the Bernstein center. More precisely, consider a homomorphism of commutative Noetherian $A$-algebras $\lambda:\cZ_{A,M}\to B$. It induces a homomorphism $\widetilde{\lambda}:\fR \to \cB$, where $\cB:= B[M/M^0]$, and $\widetilde{\lambda}$ takes the multiplicative set $\fS$ to the multiplicative subset of $\cS$ of $\cB$ having the analogous definition.
\begin{prop}\label{prop:bernsteinscalarextension}
Let $\sigma_\lambda=\sigma\otimes_{\fR,\widetilde{\lambda}}\cB$ and let $\iota:\fS^{-1}\sigma\to {\cS}^{-1}\sigma_{\lambda}$ denote the map on localizations induced by $\textup{id}\otimes 1:\sigma\to \sigma_{\lambda}$. The following diagram commutes
$$
\begin{tikzcd}
i_P^G (\fS^{-1}\sigma) \arrow[r,"J_{Q|P}(\sigma_0)"] \arrow[d,"i_P^G(\iota)"]& i_{Q}^{G}(\fS^{-1}\sigma) \arrow[d,"i_Q^G(\iota)"]\\
i_P^G ({\cS}^{-1}\sigma_\lambda) \arrow[r,"J_{Q|P}(\sigma_{0,\lambda})"] & i_{Q}^{G}({\cS}^{-1}\sigma_{\lambda})
\end{tikzcd}.$$
\end{prop}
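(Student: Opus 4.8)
The plan is to reduce Proposition~\ref{prop:bernsteinscalarextension} to the explicit formula in Corollary~\ref{cor:characterization}, exactly as in the proof of Proposition~\ref{prop:compatibility}(ii). The key point is that both $J_{Q|P}(\sigma_0)$ and $J_{Q|P}(\sigma_{0,\lambda})$ are the \emph{unique} equivariant maps on the relevant parabolic inductions whose restriction to the bottom geometric-lemma piece $\widetilde{F}_{QP}^{<1}$ is given by $f\mapsto \int_{U_Q\cap U_{\overline{P}}}f(ug)\,du$; so it suffices to check the square commutes after precomposing with the inclusion $\widetilde{F}_{QP}^{<1}\hookrightarrow i_P^G(\fS^{-1}\sigma)$, where the maps are given by an honest integral that is a finite sum (the integrand being locally constant and compactly supported on $U_Q\cap U_{\overline{P}}$ after a choice of lift), and such finite sums visibly commute with $\iota=\mathrm{id}\otimes 1$.

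First I would record that $\sigma_\lambda = \sigma\otimes_{\fR,\widetilde\lambda}\cB$ agrees with $(\sigma_{0,\lambda})_0\cdot\Psi_M$ where $\sigma_{0,\lambda}:=\sigma_0\otimes_{\cZ_{A,M},\lambda}B$ — i.e. scalar extension of the universal unramified twist along $\widetilde\lambda$ is the universal unramified twist of the scalar extension — so that $J_{Q|P}(\sigma_{0,\lambda})$ is defined and the bottom row of the diagram makes sense; here one uses that $\widetilde\lambda$ carries $\fS$ into $\cS$, stated just before the proposition. Next I would observe, via Frobenius reciprocity, that it is equivalent to show the corresponding square on $\Hom$ spaces out of $r_Q^Gi_P^G$ into $\sigma$ resp. $\sigma_\lambda$ commutes, and then invoke that the geometric-lemma filtration $\widetilde F^{\prec w}_{PQ}$, $\widetilde F^{<w}_{QP}$, etc. of Subsection~\ref{subsec:geometric} and the isomorphism $I_1\xrightarrow{\sim}\mathrm{id}$ are all functorial in $\sigma$ and compatible with scalar extension, being defined purely in terms of supports of functions on $G$ and the integration map $f\mapsto\int_{U_Q\cap U_{\overline P}}f(u)\,du$. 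Therefore the diagram with $\widetilde F^{<1}_{QP}(\fS^{-1}\sigma)$ in the upper-left corner commutes on the nose, and by the uniqueness clause of Corollary~\ref{cor:characterization} — applied over $\fS^{-1}\fR$ and over $\cS^{-1}\cB$, both of which are legitimate since $\fS^{-1}\sigma$ and $\cS^{-1}\sigma_\lambda$ are $(P,Q)$-regular by Remark~\ref{rem:terminology} — the full diagram commutes.

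I do not expect a genuine obstacle here; the content is entirely bookkeeping, and the one place requiring a line of care is the compatibility of the lift-and-integrate recipe for $J_\beta$ with $\widetilde\lambda$: the operator built from $\beta\in\fI_\sigma^{QP}$ involves choosing a lift in $\widetilde F^{<1}_{QP}$, applying $\beta\in\fR[A_M]$, and integrating, and one must note that the image of $\beta$ under $\widetilde\lambda$ lies in $\cS^{-1}\cB[A_M]$ and that its action is the base change of the action of $\beta$, so that the scalar $b\in\fS$ maps to the scalar $\widetilde\lambda(b)\in\cS$ used to normalize $J_{Q|P}(\sigma_{0,\lambda})$ — but after restricting to $\widetilde F^{<1}_{QP}$ the $\beta$'s and $b$'s cancel as in Subsection~\ref{subsec:classicalinterpretation}, which is exactly why passing to the characterization in Corollary~\ref{cor:characterization} sidesteps this. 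Accordingly the proof is a one-sentence reference to the argument of Proposition~\ref{prop:compatibility}: the integral of Corollary~\ref{cor:characterization} is a finite sum on $\widetilde F^{<1}_{QP}(\fS^{-1}\sigma)$ and commutes with $\iota=\mathrm{id}\otimes 1$, hence with scalar extension along $\widetilde\lambda$, and uniqueness does the rest.
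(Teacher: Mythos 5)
Your argument is correct and follows the paper's own route: the paper proves this proposition by the same one-line reduction to Corollary~\ref{cor:characterization}, noting that the integral over $U_Q\cap U_{\overline{P}}$ is a finite sum on $\widetilde{F}_{QP}^{<1}$ and commutes with scalar extension, with uniqueness supplying the rest. Your extra bookkeeping (that $\widetilde\lambda$ sends $\fS$ into $\cS$, that $\sigma_\lambda$ is again a universal unramified twist, and the $(P,Q)$-regularity needed to invoke the characterization) is exactly what the paper leaves implicit, so nothing is missing.
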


\begin{rem}
The level of generality in Proposition~\ref{prop:compatibility} is an improvement on \cite[Lem B.8]{dhkm_conjecture} and \cite[Lem 7.2]{dat} .
\end{rem}

\subsection{Factorization property of intertwining operators and compatibility with induction}

The next two lemmas show the canonical intertwining operator $J_{Q|P}$ is compatible with respect to changing $Q$, in a certain sense, and with parabolic induction. The arguments in \cite[Prop 7.8]{dat} work in this setting. See also \cite[IV.1(12),(14)]{waldspurger} for the classical formulation.

Given semistandard parabolics $P$, $Q$, we define $d(P,Q) = |\Sigma_{red}(P)\cap \Sigma_{red}(\overline{Q})|$ where $\Sigma_{red}(P)$ denotes the set of reduced roots of $A_M$ in $P$.

\begin{lemma}\label{lem:multiplicativity_of_intertwiners}
Let $O$, $P$, $Q$ be three parabolics with Levi component $M$ such that $d(O,Q) = d(O,P)+d(P,Q)$ and $d(O,P)=1$. Then
$$J_{Q|O}(\sigma_0) = J_{Q|P}(\sigma_0)\circ J_{P|O}(\sigma_0).$$
\end{lemma}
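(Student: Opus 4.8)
The plan is to reduce the multiplicativity statement to the defining characterization of $J_{Q|P}$ given in Corollary~\ref{cor:characterization}, namely that $J_{Q|P}(\sigma_0)$ is the unique $\fS^{-1}\fR[G]$-equivariant map $i_P^G(\fS^{-1}\sigma)\to i_Q^G(\fS^{-1}\sigma)$ sending $f\in \widetilde F_{QP}^{<1}(\fS^{-1}\sigma)$ to the function $g\mapsto \int_{U_Q\cap U_{\overline P}} f(ug)\,du$. Both sides of the claimed identity are $\fS^{-1}\fR[G]$-equivariant maps $i_O^G(\fS^{-1}\sigma)\to i_Q^G(\fS^{-1}\sigma)$ (one should first note that the common multiplicative set $\fS$, which only depends on $M$, makes all three operators live over the same ring $\fS^{-1}\fR$, so the composition on the right is legitimate). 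By the uniqueness clause it therefore suffices to check that the composite $J_{Q|P}(\sigma_0)\circ J_{P|O}(\sigma_0)$ has the defining integral behavior: restricted to an appropriate dense subspace of $i_O^G(\fS^{-1}\sigma)$, it should act by $f\mapsto \big(g\mapsto \int_{U_Q\cap U_{\overline O}} f(ug)\,du\big)$.

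First I would record the geometric input behind the hypothesis $d(O,Q)=d(O,P)+d(P,Q)$ with $d(O,P)=1$: this is precisely the combinatorial condition ensuring that $U_Q\cap U_{\overline O}$ is directly spanned, as a $p$-adic manifold with measure, by $U_P\cap U_{\overline O}$ and $U_Q\cap U_{\overline P}$ (the ``in-between'' position of $P$ on a minimal gallery from $O$ to $Q$), so that the iterated integral $\int_{U_Q\cap U_{\overline P}}\int_{U_P\cap U_{\overline O}} f(u_2 u_1 g)\,du_1\,du_2$ equals $\int_{U_Q\cap U_{\overline O}} f(ug)\,du$ up to the normalization of Haar measures, which we are agnostic about. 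This is the standard root-subgroup bookkeeping as in \cite[Prop 7.8]{dat} and \cite[IV.1]{waldspurger}, and I would cite it rather than reprove it. The point of the condition $d(O,P)=1$ is that $P$ sits on a \emph{minimal} path, so the supports interact as cleanly as possible and no ``overlap'' correction is needed.

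Next comes the bookkeeping that connects the integral formula on the nose. Take $f$ in the subspace of $i_O^G(\fS^{-1}\sigma)$ whose support meets $\overline{PwO}$-type strata appropriately so that $f\in\widetilde F_{OO}^{<1}$-analogue relevant to the pair $(O,Q)$; by density (the relevant $\widetilde F^{<1}$ is an open dense condition on supports, and both sides are continuous/$G$-equivariant) it is enough to verify the identity on such $f$. Apply $J_{P|O}(\sigma_0)$: by Corollary~\ref{cor:characterization}, $J_{P|O}(\sigma_0)(f)(g)=\int_{U_P\cap U_{\overline O}} f(ug)\,du$ on this locus; one must check this output again lies in the locus where Corollary~\ref{cor:characterization} applies for the pair $(Q,P)$ — this is exactly where $d(O,Q)=d(O,P)+d(P,Q)$ is used, as it controls how the support of $J_{P|O}(\sigma_0)(f)$ sits relative to the strata $PwQ$. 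Then apply $J_{Q|P}(\sigma_0)$ to get $g\mapsto \int_{U_Q\cap U_{\overline P}}\int_{U_P\cap U_{\overline O}} f(u_2u_1 g)\,du_1\,du_2$, and invoke the direct-product measure factorization to identify this with $g\mapsto\int_{U_Q\cap U_{\overline O}} f(ug)\,du$, which is $J_{Q|O}(\sigma_0)(f)(g)$ by Corollary~\ref{cor:characterization}. Uniqueness then forces equality of the two $G$-maps everywhere.

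The main obstacle I anticipate is not the measure-theoretic factorization (that is classical) but the careful support/density argument: one must be sure that after applying $J_{P|O}(\sigma_0)$ the resulting section still lies in the subspace on which the \emph{simple} integral formula for $J_{Q|P}(\sigma_0)$ (as opposed to the $\beta$-twisted formula of Subsection~\ref{subsec:classicalinterpretation}) is valid, and that the subspace of $i_O^G(\fS^{-1}\sigma)$ we test on is genuinely dense and preserved under the $G$-action enough to pin down a homomorphism. Here the hypothesis $d(O,P)=1$ together with $d(O,Q)=d(O,P)+d(P,Q)$ is doing real work: it guarantees the Bruhat-stratification supports line up, and it is the reason the lemma is stated only for $P$ one step away from $O$ on a minimal gallery. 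As the authors note, the arguments of \cite[Prop 7.8]{dat} transfer verbatim once one has Corollary~\ref{cor:characterization}, so I would structure the proof as ``reduce to Corollary~\ref{cor:characterization}, then cite loc.\ cit.\ for the root-combinatorial identity of integrals.''
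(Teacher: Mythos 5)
Your proposal takes essentially the same route as the paper: the paper's proof of this lemma is precisely the appeal to \cite[Prop 7.8]{dat} (see also \cite[IV.1(12),(14)]{waldspurger}), transported to the present setting through the characterization of Corollary~\ref{cor:characterization}, which is exactly your reduction via the uniqueness clause together with the factorization $U_Q\cap U_{\overline{O}}=(U_Q\cap U_{\overline{P}})\cdot(U_P\cap U_{\overline{O}})$ forced by $d(O,Q)=d(O,P)+d(P,Q)$. One small adjustment: drop the ``density'' language (there is no topology in play) and instead verify the defining identity at $g=1$ on all of $\widetilde{F}_{QO}^{<1}(\fS^{-1}\sigma)$, which is precisely what the uniqueness statement of Corollary~\ref{cor:characterization} requires and what the cited support bookkeeping in \cite[Prop 7.8]{dat} supplies.
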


\begin{lemma}[Compatibility with induction]\label{lem:factorization_of_intertwiners}
Let $P$, $Q$ be two parabolics with Levi component $M$.
\begin{enumerate}
\item Suppose $P$ and $Q$ are contained in a parabolic subgroup $O$ with Levi $N$. Then
$$J_{Q|P}^G(\sigma_0) = i_O^G(J_{Q\cap N|P\cap N}^N(\sigma_0)).$$
\item Let $N$ be a Levi subgroup of $G$ containing $M$ such that $P\cap N = Q\cap N$ and such that $PN$ and $QN$ are parabolic subgroups with Levi component $N$. Then
$$J_{Q|P}(\sigma_0) = J_{QN|PN}(i_{P\cap N}^N(\sigma_0)).$$
\end{enumerate}
\end{lemma}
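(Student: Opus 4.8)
The plan is to reduce both parts of Lemma~\ref{lem:factorization_of_intertwiners} to the characterization of $J_{Q|P}(\sigma_0)$ given in Corollary~\ref{cor:characterization}, since that characterization is the cleanest handle we have: $J_{Q|P}(\sigma_0)$ is the \emph{unique} $\fS^{-1}\fR[G]$-equivariant map $i_P^G(\fS^{-1}\sigma)\to i_Q^G(\fS^{-1}\sigma)$ whose value on $\widetilde{F}_{QP}^{<1}(\fS^{-1}\sigma)$ is the integral $f\mapsto \int_{U_Q\cap U_{\overline P}}f(ug)\,du$. So for each part I would write down the proposed right-hand side as an explicit $\fS^{-1}\fR[G]$-map, check it is well-defined after the appropriate localization, and verify the integral formula on the dense-support subspace. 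The only subtlety is bookkeeping: one must check that the multiplicative set $\fS$ (equivalently, the scalar $b$ produced by Theorem~\ref{thm:image}) attached to $(\sigma_0, P, Q)$ for $G$ is compatible with the one attached to the corresponding Levi data, which is where the hypotheses $d(O,P)=1$, $P\cap N=Q\cap N$, etc.\ enter — they guarantee that the relevant $(P,Q)$-regularity for the smaller group implies it for the larger one and vice versa. I expect this compatibility-of-denominators step to be the main obstacle; everything else is a transport of definitions.

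\textbf{Part (i).} Here $P, Q\subset O$ with $O$ having Levi $N$, and $P\cap N$, $Q\cap N$ are parabolics of $N$ with common Levi $M$. I would use the transitivity of parabolic induction, $i_P^G = i_O^G\circ i_{P\cap N}^N$ and $i_Q^G = i_O^G\circ i_{Q\cap N}^N$, to make sense of $i_O^G(J_{Q\cap N|P\cap N}^N(\sigma_0))$ as a map $i_P^G(\fS^{-1}\sigma)\to i_Q^G(\fS^{-1}\sigma)$ — after noting that the localization $\fS$ for the pair $(M\subset N)$ inside $N$ is the \emph{same} multiplicative subset of $\fR = \cZ_{A,M}[M/M^0]$ as the one for $(M\subset G)$, since $\fR$ and the set $\fS$ depend only on $M$, not on the ambient group. (One should observe that $M^0$ and $M\cap G^0$ are intrinsic to $M$, so there is genuinely no ambiguity.) Since $i_O^G$ is exact and $\fS^{-1}\fR[N]$-linear, $i_O^G(J_{Q\cap N|P\cap N}^N(\sigma_0))$ is $\fS^{-1}\fR[G]$-equivariant. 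By the uniqueness in Corollary~\ref{cor:characterization}, it remains only to check the integral formula: evaluating $i_O^G(J_{Q\cap N|P\cap N}^N(\sigma_0))$ at $g=1$ on a function supported so that its restriction to $N$ lies in $\widetilde{F}^{<1}_{Q\cap N, P\cap N}$ gives $\int_{(U_{Q\cap N})\cap(U_{\overline{P\cap N}})} f(u)\,du$, and one identifies $U_Q\cap U_{\overline P} = U_{Q\cap N}\cap U_{\overline{P\cap N}}$ because $P,Q\subset O$ forces the root groups in $U_Q\cap U_{\overline P}$ to lie inside $N$. This last root-group identification is the only real content of the step.

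\textbf{Part (ii).} Now $N\supseteq M$ with $P\cap N = Q\cap N$, and $PN$, $QN$ are parabolics with Levi $N$; by hypothesis $i_{PN}^G\circ i_{P\cap N}^N = i_P^G$ and likewise for $Q$, so $J_{QN|PN}(i_{P\cap N}^N(\sigma_0))$ is a map $i_P^G(\fS^{-1}\sigma)\to i_Q^G(\fS^{-1}\sigma)$ once we know that the localizing set for the pair $(N\subset G)$ applied to $i_{P\cap N}^N(\sigma_0)$ is compatible with $\fS$; here one uses $\cZ_{A,M}$ acting through $i_{P\cap N}^N$ and the functoriality of the Bernstein center, together with the fact that $\fS$ for $N$ sits inside the image of $\fS$ for $M$ under the natural map $\cZ_{A,M}[M/M^0]\to \cZ_{A,N}[N/N^0]$ restricted to $N\cap G^0$. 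Then I would again invoke Corollary~\ref{cor:characterization}: $J_{QN|PN}(i_{P\cap N}^N(\sigma_0))$ is $\fS^{-1}\fR[G]$-equivariant, and its value at $g=1$ on $\widetilde{F}^{<1}_{QN|PN}$ is $\int_{U_{QN}\cap U_{\overline{PN}}} f(u)\,du$. The key identity is $U_Q\cap U_{\overline P} = U_{QN}\cap U_{\overline{PN}}$, which follows from $P\cap N = Q\cap N$: the roots in $U_Q\cap U_{\overline P}$ are exactly the roots appearing positively in $Q$ and negatively in $P$, and these are disjoint from $N$ precisely because $P$ and $Q$ agree on $N$. Verifying this root-combinatorial statement, and the matching of the filtration pieces $\widetilde F^{<1}$ under transitivity of $r_Q^G$, is the technical heart; as the excerpt notes, the computations of \cite[Prop 7.8]{dat} carry over, so I would cite that for the routine parts and only spell out the identification of Haar-measure domains and of localizing sets.
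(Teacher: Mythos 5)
Your overall route is exactly the one the paper intends: it gives no proof of Lemma~\ref{lem:factorization_of_intertwiners}, deferring to \cite[Prop 7.8]{dat} and \cite[IV.1(12),(14)]{waldspurger}, and those arguments are precisely what you describe --- transitivity of parabolic induction, the uniqueness/integral characterization of Corollary~\ref{cor:characterization} on the big-cell subspace $\widetilde{F}^{<1}$, and the unipotent-radical identities $U_Q\cap U_{\overline{P}}=U_{Q\cap N}\cap U_{\overline{P\cap N}}$ (part (i)) and $U_Q\cap U_{\overline{P}}=U_{QN}\cap U_{\overline{PN}}$ (part (ii)), both of which you justify correctly at the level of roots.

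There is, however, one concretely false step in your bookkeeping for part (i): you assert that the multiplicative set $\fS$ is literally the same for the pair $(M\subset N)$ as for $(M\subset G)$ because ``$M\cap G^0$ is intrinsic to $M$.'' It is not: $G^0$ depends on the ambient group, and in general $M\cap N^0\subsetneq M\cap G^0$ (e.g.\ for $M$ the diagonal torus of $GL_2$ and $N=M$, $M\cap N^0=M^0$ is strictly smaller than $M\cap G^0=\{\,|\det|=1\,\}$); the paper itself acknowledges this dependence when it defines $S_i$ ``where $G$ is taken to be $M_{\alpha_i}$.'' What is true, and what your argument actually needs, is the inclusion $\fS_N\subseteq\fS_G$ (since $N^0\subseteq G^0$), so that $i_O^G(J^N_{Q\cap N|P\cap N}(\sigma_0))$, a priori defined over $\fS_N^{-1}\fR$, can be further localized (flatly, using Proposition~\ref{extension_of_scalars_flat_lam_prop}/Proposition~\ref{prop:compatibility}-type compatibility) to $\fS_G^{-1}\fR$, where the comparison with $J^G_{Q|P}(\sigma_0)$ via Corollary~\ref{cor:characterization} takes place. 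Relatedly, in part (ii) your sentence about ``$\fS$ for $N$ sitting inside the image of $\fS$ for $M$ under the natural map $\cZ_{A,M}[M/M^0]\to\cZ_{A,N}[N/N^0]$'' invokes a ring map on Bernstein centers that does not exist in that direction; the correct comparison uses the surjection $M/M^0\twoheadrightarrow N/N^0$ on the twisting variables (so $\Psi_N|_M$ is a specialization of $\Psi_M$) together with the action of $\cZ_{A,N}$ on $i_{P\cap N}^N(\sigma_0)$ and a specialization in the style of Proposition~\ref{prop:bernsteinscalarextension}, after which the two operators live over comparable localizations and the uniqueness argument applies. With these repairs your proof sketch is sound and coincides with the cited arguments of Dat and Waldspurger.
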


\section{Proofs of Theorems~\ref{thm:intertwining} and~\ref{thm:image}}\label{sec:proofs}

\begin{proof}[Proof of Theorem~\ref{thm:intertwining}]
We show that $\textup{Hom}_{B[M]}(I_w(\sigma),\sigma) = 0$ where $I_w(\sigma) =  i_{M\cap wP}^M\circ \dot{w}\circ r_{w^{-1}Q\cap M}^M(\sigma)$ and $w \neq 1$. By the second adjunction theorem \cite{dhkm_finiteness}, it is isomorphic to
$$\Hom_{B[M \cap wM]}(\dot{w}\circ r_{w^{-1}Q\cap M}^M(\sigma), r_{M\cap w\overline{P}}^M(\sigma)).$$
Now let $a$ be an element of $A_M$. Note that $a \in M \cap wM$ because it even belongs to $A_{M\cap wM}$ via the natural inclusion of groups. As parabolic restriction only occurs on $\sigma_0$, the action of $a$ is:
\begin{itemize}[align=left]
    \item[(LHS)] $r_a \otimes \Psi(a^w)$ where $r_a \in \textup{End}_{A[M \cap wM]}(\dot{w} \circ r_{w^{-1}Q\cap M}^M(\sigma_0))$ is invertible;
    \item[(RHS)] $s_a \otimes \Psi(a)$ where $s_a \in \textup{End}_{A[M \cap wM]}(r_{M \cap w \bar{P}}^M(\sigma_0))$ is invertible.
\end{itemize}
In particular $\alpha  = \Psi(a^w)^{-1} a = \Psi (a^{-w}) a$ acts as $r_a \otimes 1$ on the LHS and as $s_a \otimes \Psi(a a^{-w})$ on the RHS. Because parabolic restriction preserves finitely generated representations, we deduce that $\textup{End}_{A[M \cap wM]}(\dot{w} \circ r_{w^{-1}Q\cap M}^M(\sigma_0))$ is finitely generated over $\mathcal{Z}_{A,M}$ thanks to Lemma \ref{lem:finiteness}. As a result, there exists a polynomial $P_{r_a}$ in $\mathcal{Z}_{A,M}[X]$ that is killing $r_a$. Since $r_a$ is invertible, the Cayley-Hamilton theorem ensures that $r_a$ is killed by a polynomial $P_{r_a}$ of the form $X^n + a_{n-1} X^{n-1} + \dots + a_0$ where $a_0 \in \mathcal{Z}_{A,M}^\times$ and $n \geq 1$.

We concoct a polynomial killing the action of $\alpha$ on the LHS by taking $P_\alpha \in B[X]$ of the form $X^n + (a_{n-1} \otimes 1) \cdot X^{n-1} + \dots + a_0 \otimes 1$. Then $P_\alpha(\alpha)$ acts on the LHS via:
$$r_a^n \otimes 1 + (a_{n-1}r_a^{n-1}) \otimes 1 + \dots + a_0 \otimes 1 = P_{r_a}(r_a) \otimes 1 = 0.$$ 
Let $\beta = P_{\alpha}(\alpha) \in B [A_M]$. By construction, it acts as zero on the LHS. On the RHS however, it acts as $s_a^n \otimes \Psi(a^{-w} a)^n + (a_{n-1} s_a^{n-1}) \otimes \Psi(a^{-w} a)^{n-1} + \dots + (a_0 \otimes 1)$.

We now suppose $a$ chosen so that $a(a^{-1})^{w^{-1}}$ is a non-compact element -- such elements exist by \cite[Eq. (6)]{waldspurger}. We want to show $\{ v \in r_{M\cap w\overline{P}}^M(\sigma) \ | \ \beta \cdot v = 0 \} = \{ 0 \}$. This follows immediately from the next lemma taking $Y = \psi(a^{-w} a)$.

\begin{lemma}
Let $V_0$ be a module over a commutative ring $D$. Let $V = V_0 \otimes_DD[X_1^{\pm 1},\dots, X_r^{\pm 1}]$. Let $Y = X_1^{e_1}\cdots X_r^{e_r} \neq 1$, abbreviated $X^{\vec e}$, and let $\beta(Y) = Y^n + a_{n-1} Y^{n-1} + \dots + a_0 \in D[Y]$ with $n\geq 1$. If $v\in V$ satisfies $\beta(Y)v=0$, then $v=0$.
\end{lemma}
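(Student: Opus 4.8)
The plan is to prove the contrapositive by a \emph{leading-term} (grading) argument on the Laurent polynomial ring. Since $Y = X^{\vec e}$ with $\vec e \neq 0$, some coordinate $e_{i_0}$ is nonzero; replacing the functional below by its negative if necessary, we may assume $e_{i_0} > 0$. First I would equip $D[X_1^{\pm 1},\dots,X_r^{\pm 1}]$ with the $\ZZ$-grading in which the monomial $X^{\vec k}$ is homogeneous of degree $\ell(\vec k) := k_{i_0}$, the $i_0$-th coordinate of $\vec k$. This makes it a $\ZZ$-graded ring in which $Y$ is a homogeneous \emph{unit} of degree $N := e_{i_0} > 0$, and correspondingly $V = V_0 \otimes_D D[X_1^{\pm 1},\dots,X_r^{\pm 1}]$ becomes a $\ZZ$-graded $D$-module; write $V^{(j)}$ for its degree-$j$ component. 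The key structural observation is that multiplication by $Y^m$ restricts, for every $m \in \ZZ$, to an isomorphism $V^{(j)} \overset{\sim}{\to} V^{(j+mN)}$, because $Y^m$ is invertible in the ring.

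Next, suppose $v \neq 0$ and decompose it as a finite sum $v = \sum_j v^{(j)}$ with $v^{(j)} \in V^{(j)}$; let $j_{\max}$ be the largest index with $v^{(j_{\max})} \neq 0$. Setting $a_n := 1$, I would then extract the degree-$(j_{\max}+nN)$ homogeneous component of $\beta(Y) v = \sum_{i=0}^{n} a_i Y^i v$. Since each $a_i$ lies in $D$ (degree $0$), this component equals $\sum_{i=0}^{n} a_i\, Y^i v^{(j_{\max}+(n-i)N)}$. For $i < n$ the exponent $j_{\max} + (n-i)N$ is strictly larger than $j_{\max}$ — here $N > 0$ is used crucially — so $v^{(j_{\max}+(n-i)N)} = 0$ by maximality; the only surviving term is $i = n$, giving $Y^n v^{(j_{\max})}$, which is nonzero because $Y^n$ is a unit and $v^{(j_{\max})} \neq 0$. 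Hence $\beta(Y) v$ has a nonzero homogeneous component, so $\beta(Y) v \neq 0$, contradicting the hypothesis; therefore $v = 0$.

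There is no serious obstacle here: the argument is entirely formal once the grading is set up. The only points requiring care are that the two hypotheses are used exactly where needed. The condition $Y \neq 1$ is what guarantees $N \neq 0$, i.e. that multiplication by $Y$ strictly raises degree — without it the statement is false (e.g. $\beta(Y) = Y - 1$ acting on a one-dimensional module). And the monicity of $\beta$ (top coefficient a \emph{unit}, not merely a nonzero element of $D$) is what prevents the top component $Y^n v^{(j_{\max})}$ from being annihilated; for a non-monic $\beta$ the conclusion fails as soon as $D$ has zero-divisors or elements with nontrivial annihilators. Choosing the grading functional $\ell$ so that $Y$ has nonzero degree is immediate from $\vec e \neq 0$, so this is the only genuinely ``chosen'' ingredient in the proof.
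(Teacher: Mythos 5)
Your proof is correct and is essentially the paper's argument: both extract the leading homogeneous component of $\beta(Y)v$ with respect to a $\ZZ$-valued linear functional on the exponent lattice that is positive on $\vec e$, and observe that this component is $Y^n$ times the top component of $v$, hence nonzero because $Y^n$ is a unit. The only cosmetic difference is your choice of a coordinate functional $\vec k\mapsto \pm k_{i_0}$ in place of the paper's $\vec k\mapsto \vec k\cdot\vec e$, which lets you skip the paper's auxiliary step of multiplying $v$ by a monomial to arrange positivity.
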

\begin{proof}
Let $v$ be killed by $\beta$ and write:
$$v = \sum_{\vec f}v_{\vec f}X^{\vec f},$$ where $X^{\vec f} = X_1^{f_1}\cdots X_r^{f_r}$. Suppose for contradiction that $v\neq 0$. 

First, suppose that within the set of $\vec f$ such that $v_{\vec f}\neq 0$, there exists at least one element such that $\vec f\cdot \vec e = \sum_{i=1}^r f_i e_i > 0$. In this case, let $F_{max}$ be the set of $\vec f$ such that $\vec f \cdot \vec e$ is maximal and $v_{\vec f}\neq 0$. In particular, note that $\sum_{\vec f\in F_{max}}v_{\vec f} X^{\vec f}\neq 0$. If we write $\beta v = \sum_{\vec g}v'_{\vec g}X^{\vec g}$, let $G_{max}$ be the set of $\vec g$ such that $\vec g\cdot \vec e$ is maximal and $v'_{\vec g}\neq 0$. In particular, since
$$\beta v = \sum_{\vec f}(v_{\vec f}X^{\vec f + n\vec e} + v_{\vec f}a_{n-1}X^{\vec f + (n-1)\vec e} + \cdots + v_{\vec f}a_0 X^{\vec f}),$$
and $\vec e \cdot \vec e>0$, we have $G_{max} = F_{max} + n\vec e$. Since $\beta v=0$ we have $$\sum_{\vec g\in G_{max}}v'_{\vec g}X^{\vec g} = \sum_{\vec f\in F_{max}}v_{\vec f}X^{\vec f + n\vec e} = X^{n\vec e}(\sum_{\vec f\in F_{max}}v_{\vec f}X^{\vec f})=0.$$
As $X^{n \vec e}$ is a unit, this is a contradiction. Thus we conclude that $v=0$ in this case.

Up to multiplying $v$ by a suitable monomial, which is a unit, we can assume the conditions $\vec f \cdot \vec e > 0$ and $v_{\vec f} \neq 0$ hold. Therefore $v=0$.

\end{proof}

We can now conclude that: 
$$\Hom_{B[M \cap wM]}(\dot{w}\circ r_{w^{-1}Q\cap M}^M(\sigma), r_{M\cap w\overline{P}}^M(\sigma))=0.$$
Indeed, let $\phi$ be such a morphism. Since $\phi$ is $B$-equivariant, we must have $\beta \cdot \phi(w) = \phi (\beta \cdot w) = 0$ for all $w$ in the LHS. But we have just proved that only the zero element is killed by $\beta$ on the RHS. Therefore, for all $w$, we have $\phi(w) = 0$, \textit{i.e.} $\phi=0$. This concludes the proof of Theorem~\ref{thm:intertwining}.
\end{proof}

Next we turn to proving Theorem~\ref{thm:image}. Before we begin, we establish two lemmas.

\begin{lemma}\label{lemma:finiteness_polynomial}
Suppose $\sigma_0$ is a finitely generated $A[M]$-module. For any $w$ and any element $a$ in $A_{wM\cap M}$, let $\phi_w(a)$ denote the endomorphism of $r^M_{M\cap w^{-1}Q}(\sigma)$ induced by the action of $w^{-1}aw$ under right translation. There exists a polynomial 
$$f_{w,a}(X) = X^d + b_{d-1}X^{d-1} + \cdots + b_1X + b_0 \text{\ \  in  $\cZ_{A,M}[X]$}$$ with $b_0\in \cZ_{A,M}^{\times}$ such that the element $f_{w,a}(\phi_w(a))$ is the zero endomorphism.
\end{lemma}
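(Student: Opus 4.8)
The plan is to mimic the structure of the proof of Theorem~\ref{thm:intertwining}: reduce the existence of the polynomial $f_{w,a}$ to the Cayley--Hamilton theorem applied to an invertible endomorphism of a finitely generated module over $\cZ_{A,M}$. First I would observe that, since $\sigma_0$ is finitely generated over $A[M]$ and parabolic restriction preserves finite generation (this is used already in the proof of Theorem~\ref{thm:intertwining}), the representation $r^M_{M\cap w^{-1}Q}(\sigma_0)$ is a finitely generated $A[M\cap w^{-1}Q]$-module, hence a fortiori finitely generated over the smaller Levi $w^{-1}Mw \cap M$. By Lemma~\ref{lem:finiteness}(ii), the endomorphism ring $\End_{A[w^{-1}Mw\cap M]}(r^M_{M\cap w^{-1}Q}(\sigma_0))$ is a noetherian $\cZ_{A,M}$-module, in particular finitely generated over $\cZ_{A,M}$.

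Next I would identify the relevant endomorphism. The element $a\in A_{wM\cap M}$ is central in $wM\cap M$, so $w^{-1}aw$ lies in the center $A_{w^{-1}M w\cap M}$ of $w^{-1}Mw\cap M$; right translation by $w^{-1}aw$ therefore commutes with the $w^{-1}Mw\cap M$-action on $r^M_{M\cap w^{-1}Q}(\sigma_0)$ and defines an element $r_a$ of the above endomorphism ring. It is invertible, with inverse given by right translation by $w^{-1}a^{-1}w$. As in the proof of Theorem~\ref{thm:intertwining}, finite generation of the endomorphism ring over $\cZ_{A,M}$ lets us apply the Cayley--Hamilton theorem: $r_a$ satisfies a monic polynomial $g(X)\in\cZ_{A,M}[X]$, and invertibility of $r_a$ forces the constant term $g(0)=\pm\det(r_a)$ to be a unit in $\cZ_{A,M}$ (equivalently, the characteristic polynomial can be arranged to have unit constant term since $r_a$ is a unit). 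Finally I would note that on the twisted module $\sigma = \sigma_0\otimes_A\Psi$, the action of right translation by $w^{-1}aw$ factors as $\phi_w(a) = r_a\otimes \Psi(\text{something})$ — but since parabolic restriction only touches $\sigma_0$ and the right-translation action of $w^{-1}aw$ on $i^M_{M\cap w^{-1}Q}$ is on the $\sigma_0$-coordinate, $\phi_w(a)$ is literally $r_a$ acting on $r^M_{M\cap w^{-1}Q}(\sigma_0)\otimes_A R$; thus $g(\phi_w(a)) = g(r_a)\otimes 1 = 0$ and one takes $f_{w,a}=g$ (with coefficients pushed into $\cZ_{A,M}[X]$), reindexing the constant-term condition as $b_0\in\cZ_{A,M}^\times$.

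I expect the only genuine subtlety to be the bookkeeping about which group $a$ and $w^{-1}aw$ are central in, and making sure the endomorphism $\phi_w(a)$ is $\cZ_{A,M}$-linear so that Cayley--Hamilton genuinely applies — i.e. verifying that $w^{-1}aw\in A_{w^{-1}Mw\cap M}$ and that right translation by it commutes with the relevant group action. Once that is in place, the finiteness input is exactly Lemma~\ref{lem:finiteness}(ii) and the rest is the standard Cayley--Hamilton argument already deployed for Theorem~\ref{thm:intertwining}, so no new ideas are needed. A minor point to state carefully is that extension of scalars from $A$ to $R=A[M/M^0]$ commutes with forming $g(r_a)$, which is clear since $g$ has coefficients in $\cZ_{A,M}$ acting through the $A$-structure.
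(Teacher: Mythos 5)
Your overall strategy --- finiteness of the relevant endomorphism ring over the Bernstein center, Cayley--Hamilton, invertibility to force a unit constant term, then untwisting --- is the same as the paper's, but the key finiteness step is not delivered by what you cite. Lemma~\ref{lem:finiteness}(ii), applied to the group $L=w^{-1}Mw\cap M$, says that $\End_{A[L]}(r^M_{M\cap w^{-1}Q}(\sigma_0))$ is noetherian as a module over $\cZ_{A,L}$, the Bernstein center of $L$, not over $\cZ_{A,M}$. The entire point of the lemma is that the coefficients $b_i$ lie in $\cZ_{A,M}$, since they must later sit inside $\fR=\cZ_{A,M}[M/M^0]$ to produce $\beta\in\fI_{\sigma}^{QP}\subset\fR[A_M]$ and $b\in\fS$ in the proof of Theorem~\ref{thm:image}; coefficients in the center of the smaller Levi are of no use there unless you add a comparison between $\cZ_{A,L}$ and the functorial $\cZ_{A,M}$-action on the Jacquet module (e.g.\ a finiteness statement for $\cZ_{A,L}$ over the image of $\cZ_{A,M}$), which you neither state nor prove. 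The paper's proof takes a different input precisely to avoid this: it uses Lemma~\ref{lem:finiteness}(i) --- $\sigma_0$ is admissible over $\cZ_{A,M}$ and parabolic restriction preserves admissibility over $\cZ_{A,M}$ (the center acting through functoriality) --- so that $\End$ of the Jacquet module is finitely generated as a $\cZ_{A,M}$-module directly, and Cayley--Hamilton then yields $f_{w,a}\in\cZ_{A,M}[X]$. Either adopt that admissibility route or supply the missing comparison of centers; as written, this is a genuine gap at the crux of the lemma.

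Two further points. First, your untwisting claim is incorrect as stated: on $r^M_{M\cap w^{-1}Q}(\sigma)$ right translation by $w^{-1}aw$ acts as $r_a\otimes\overline{w^{-1}aw}$, not ``literally $r_a$''; the twist $\Psi$ does not vanish under parabolic restriction (only the unipotent radical lies in $M^0$, while $w^{-1}aw$ is deliberately chosen noncompact later). The polynomial $f_{w,a}$ with coefficients in $\cZ_{A,M}$ kills the action on $r^M_{M\cap w^{-1}Q}(\sigma_0)$ (equivalently $r_a\otimes 1$); handling the genuinely twisted action is exactly the content of Lemma~\ref{lemma:twisted_finiteness_polynomial}, whose coefficients are modified by powers of $\overline{w^{-1}aw}^{-1}$. (The statement of the present lemma should be read with $\sigma_0$, as the paper's own proof and the proof of Lemma~\ref{lemma:twisted_finiteness_polynomial} make clear.) Second, the deduction ``invertible, hence unit constant term'' is as quick in your write-up as in the paper: for a finitely generated, not necessarily free, module the determinant trick does not give a well-defined $\det(r_a)$, and the constant term it produces need not be a unit; one should also apply Cayley--Hamilton to $\phi_w(a)^{-1}$ and combine, e.g.\ if $P$ kills $\phi_w(a)$ and $Q$ (monic of degree $m$) kills its inverse, then $X^mP(X)+X^mQ(1/X)$ is monic with unit constant term and kills $\phi_w(a)$. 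Since the paper glosses this identically, it is not a gap specific to your argument, but it is worth making explicit.
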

\begin{proof}
Since $\sigma_0$ is admissible over $\cZ_{A,M}$, and parabolic restriction preserves admissibility, the endomorphism rings $\End_{\cZ_{A,M}[M]}(r^M_{M\cap w^{-1}Q}(\sigma_0)$ is finitely generated as a $\cZ_{A,M}$-module. By the Cayley--Hamilton theorem, there exists a monic polynomial $f_{w,a}(X)$ in $\cZ_{A,M}[X]$ such that $f_{w,a}(\phi_w(a))=0$ in $\End_{\cZ_{A,M}[M]}(r^M_{M\cap w^{-1}Q}(\sigma_0)))$. Since $\phi_w(a)$ is coming from the action of a group element, it is an invertible endomorphism, so the constant term $b_0$ of the characteristic polynomial is a unit.
\end{proof}

\begin{lemma}\label{lemma:twisted_finiteness_polynomial}
For any $w$ and any element $a$ in $A_{wM\cap M}$, let $\Phi_w(a)$ denote the endomorphism of $r^M_{M\cap w^{-1}Q}(\sigma)$ induced by the action of $w^{-1}aw$ via right translation. In the notation of Lemma~\ref{lemma:finiteness_polynomial}, the polynomial 
$$F_{w,a}(X)=(1\otimes \overline{w^{-1}aw}^{-d})X^d + (b_{d-1}\otimes \overline{w^{-1}aw}^{-(d-1)})X^{d-1} + \cdots + (b_1\otimes \overline{w^{-1}aw}^{-1})X + b_0\otimes 1$$ in $\fR[X]$ has the property that $F_{w,a}(\Phi_w(a))$ is the zero endomorphism.
\end{lemma}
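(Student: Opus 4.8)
The plan is to deduce this from Lemma~\ref{lemma:finiteness_polynomial} by bookkeeping how the universal unramified twist interacts with parabolic restriction and with the action of $w^{-1}aw$. Note first that $w^{-1}aw$ does lie in $M$, since $w^{-1}(wM\cap M)w = M\cap w^{-1}Mw\subseteq M$, so $\overline{w^{-1}aw}$ is a well-defined element of $M/M^0$, and in particular a unit $u$ of $R$ and of $\fR$ because $M/M^0$ is free abelian.

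The key structural observation is that $r^M_{M\cap w^{-1}Q}(\sigma)\cong r^M_{M\cap w^{-1}Q}(\sigma_0)\otimes_A R$ as $\fR$-modules, and that under this identification $\Phi_w(a) = \phi_w(a)\otimes u$, i.e. $(\bar v\otimes r)\mapsto \phi_w(a)(\bar v)\otimes ur$. Indeed, since $\sigma = \sigma_0\otimes_A\Psi$ has $M$-action $\sigma(m)(v\otimes 1) = \sigma_0(m)v\otimes\overline m$, the unipotent radical of $M\cap w^{-1}Q$ (which lies in $M^0$, hence has trivial image in $M/M^0$) acts on $\sigma$ only through $\sigma_0$, so the Jacquet functor commutes with $-\otimes_A R$; and the induced action of $w^{-1}aw$ on the quotient, being built from $\sigma(w^{-1}aw) = \sigma_0(w^{-1}aw)\otimes u$, is $\phi_w(a)\otimes u$. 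Moreover $\phi_w(a)$ is $\cZ_{A,M}$-linear (as in Lemma~\ref{lemma:finiteness_polynomial}), and $b_j\otimes u^{-j}\in\fR = \cZ_{A,M}[M/M^0]$ acts on $r^M_{M\cap w^{-1}Q}(\sigma_0)\otimes_A R$ by $b_j$ on the first factor (via the natural $\cZ_{A,M}$-action) and by multiplication by $u^{-j}$ on $R$.

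Granting this, the conclusion is a formal computation: taking powers gives $\Phi_w(a)^j = \phi_w(a)^j\otimes u^j$, and with $b_d := 1$,
$$F_{w,a}(\Phi_w(a)) = \sum_{j=0}^{d}(b_j\otimes u^{-j})\bigl(\phi_w(a)^j\otimes u^j\bigr) = \Bigl(\sum_{j=0}^{d}b_j\,\phi_w(a)^j\Bigr)\otimes 1 = f_{w,a}(\phi_w(a))\otimes 1 = 0,$$
by Lemma~\ref{lemma:finiteness_polynomial}, the factors $u^{-j}u^j$ telescoping to $1$ and $\phi_w(a)$ commuting with the $b_j$. The only step requiring care is the identification in the second paragraph --- verifying that parabolic restriction commutes with the base change $A\to R$ along $\Psi$ and that translation by $w^{-1}aw$ contributes exactly the scalar $\overline{w^{-1}aw}$ --- but I do not anticipate a genuine obstacle, as this is the same mechanism already used (for a conjugated element) in the proof of Theorem~\ref{thm:intertwining}.
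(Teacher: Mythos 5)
Your proof is correct and follows essentially the same route as the paper: the paper verifies the identity directly on simple tensors, computing $F_{w,a}(\Phi_w)(h\otimes 1)=\sum_k (b_k\phi_w^k h)\otimes 1=(f_{w,a}(\phi_w)h)\otimes 1=0$, which is exactly your telescoping cancellation via the decomposition $\Phi_w(a)=\phi_w(a)\otimes\overline{w^{-1}aw}$. The only difference is that you spell out the identification $r^M_{M\cap w^{-1}Q}(\sigma)\cong r^M_{M\cap w^{-1}Q}(\sigma_0)\otimes_A R$ (unipotent radical contained in $M^0$), which the paper uses implicitly (it is noted in the proof of Theorem~\ref{thm:intertwining} that ``parabolic restriction only occurs on $\sigma_0$'').
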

\begin{proof}
Let $h\otimes 1$ be a simple tensor in $\sigma = \sigma_0\otimes_{\ZZ'} \ZZ'[M/M^0]$ and let $\phi_w(a)$ be the endomorphism defined in Lemma~\ref{lemma:finiteness_polynomial}. To ease notation we abbreviate $\Phi_w=\Phi_w(a)$ and $\phi_w = \phi_w(a)$.
\begin{align*}
F_{w,a}(\Phi_w)(h\otimes 1) &= \sum_{k=0}^d(b_k\otimes \overline{w^{-1}aw}^{-k})\cdot \Phi_w^k((h\otimes 1))\\
&= \sum_{k=0}^d(b_k\otimes \overline{w^{-1}aw}^{-k})\cdot (\phi_w^k h\otimes \overline{w^{-1}aw}^k)\\
&= \sum_{k=0}^d (b_k\phi_w^k h)\otimes 1\\
&= \left(\sum_{k=0}^d b_k\phi_w^k h\right)\otimes 1\\
&= (f_{w,a}(\phi_w)h)\otimes 1 = 0\otimes 1
\end{align*}
\end{proof}

\begin{proof}[Proof of Theorem~\ref{thm:image}]

Let $E_w$ denote the endomorphism algebra $\End_{\fR[wM\cap M]}(\dot{w}\circ r_{Q\cap w^{-1}(M)}^M(\sigma))$. Given an element $u$ of $\fR[A_{wM \cap M}]$, we will let $\rho_w(u)$ denote the element of $E_w$ induced by the functor $\dot{w}\circ r_{Q\cap w^{-1}(M)}^M$. Observe that under the natural ring isomorphism
$$E_w \cong \End_{\fR[M\cap w^{-1}M]}(r_{Q\cap w^{-1}(M)}^M(\sigma)),$$ the endomorphism $\rho_w(a)$, $a\in A_M$, corresponds to the endomorphism $\Phi_w(a)$ appearing in Lemma~\ref{lemma:twisted_finiteness_polynomial}. 

For each $w\neq 1$, let $a_w$ be an element of $A_M$ such that $a_ww^{-1}a_w^{-1}w$ is not in $M^0$ (see Equation (6) in [Waldspurger]). By viewing $a_w$ as an element of $A_{wM\cap M}$ by way of the natural inclusion $A_M\to A_{wM\cap M}$, we can apply Lemma~\ref{lemma:twisted_finiteness_polynomial} to obtain polynomials $F_{w,a_w}(X)$ in $\fR[X]$ such that $F_{w,a_w}(\rho_w(a_w))=0$ in $E_w$.

Now define the element $\beta$ of the group ring $\fR[A_M]$ by $$\beta := \prod_{w\neq 1}F_{w,a_w}(a_w).$$ Since $F_{w,a_w}(a_w)$ acts by zero on each $\dot{w}\circ r_{w^{-1}Q\cap M}^M(\sigma)$, $w\neq 1$, it also acts by zero on each subquotient $i_{M\cap wP}^M \circ \dot{w}\circ r^M_{w^{-1}Q\cap M}(\sigma)$ of the geometric lemma filtration. Thus $\beta$ defines an element of the ideal $\fI_\sigma^{QP}\subset \fR[A_M]$. In Subsection~\ref{sec:localizing} we defined the element $J_{\beta}$ in
$\Hom_{\fR[M]}\left(r_Q^G\circ i_P^G (\sigma) ,\sigma\right)$ induced by $\beta$. We now describe an element $b$ of $\fS\subset \fR$ such that $\sig_{PQ}(J_\beta)$ coincides with $b$ in $\End_{\fR[M]}(\sigma)$.

Recall our notation $E_1= \End_{\fR[M]}(\sigma)$, and for an arbitrary element $u$ of $\fR[A_M]$, recall that $\rho_1(u)$ denotes the element of $E_1$ defined by right-translation. In particular $\rho_1(u)$ lands in the image of the natural map $$\fR\to \End_{\fR[M]}(\sigma).$$ Given $a$ in $A_M$, we can describe $\rho_1(a)$ as an element of $\fR$ in the following manner:
\begin{itemize}
\item since $a$ is in $A_M$ it defines an endomorphism of the identity functor and thus an element of $\cZ_{A,M}$, which we will denote $z_{a}$;
\item since $a$ is in $M$ it defines an element of $A[M/M^0]$, which we will denote $\overline{a}$;
\item $\rho_1(a)$ is the element $z_a\otimes \overline{a}$ of $\fR = \cZ_{A,M}\otimes A[M/M^0]$.
\end{itemize}
In the above notation, we have
$$
\rho_1(F_{w,a_w}(a_w)) = F_{w,a_w}(z_{a_w}\otimes \overline{a_w}),$$
which leads us to define.
$$b:=\rho_1(\beta) = \prod_{w\neq 1}F_{w,a_w}(z_{a_w}\otimes \overline{a_w})\ .$$
Using the description of $F_{w,a_w}(X)$ in Lemma~\ref{lemma:twisted_finiteness_polynomial}, each of the factors $F_{w,a_w}(z_{a_w}\otimes \overline{a_w})$ can be written
\begin{align*}
\sum_{k=0}^d(b_k\otimes \overline{w^{-1}a_ww}^{-k})(z_{a_w}\otimes \overline{a_w})^k= \sum_{k=0}^db_kz_{a_w}^k\otimes (\overline{a_ww^{-1}a_w^{-1}w})^k\ ,
\end{align*}
where $b_d=1$ and $b_0$ and $z_{a_w}$ are in $\cZ_{A,M}^{\times}$. Thus $F_{w,a_w}(z_{a_w}\otimes \overline{a_w})$ is in $\fS$ and so $b$ is also in $\fS$.

\end{proof}

\section{Schur's lemma}\label{sec:schur}
In this section we use the results of Section~\ref{sec:unramifiedtwisting} to deduce that universal unramified twisting and parabolic induction preserves Schur's lemma. Recall the notation $R = A[M/M^0]$ and $\fR = \cZ_{A,M}[M/M^0]$.

\begin{lemma}
\label{lem:equivariance} \
\begin{enumerate}
\item Let $\sigma_0$ be an arbitrary smooth $A[M]$-module. Then $\End_{\fR[M]}(\sigma) = \End_{R[M]}(\sigma)$. 
\item If the image of $\cZ_{A,M}$ in $\End_{A[M]}(\sigma_0)$ is contained in the image of $A\to \End_{A[M]}(\sigma_0)$, then $$\Hom_{\fR[G]}(i_P^G(\sigma),i_Q^G(\sigma))=\Hom_{R[G]}(i_P^G(\sigma),i_Q^G(\sigma)).$$
\end{enumerate}
\end{lemma}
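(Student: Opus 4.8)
\textbf{Proof plan for Lemma~\ref{lem:equivariance}.}

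The plan is to reduce both statements to the observation that the two actions of $\fR$ on $\sigma$ (the ``correct'' one from Remark~\ref{rem:differentaction} and the one obtained by pushing $\cZ_{A,M}$ into $\End_{A[M]}(\sigma_0)$) differ only by how $\cZ_{A,M}$ acts, and that in both parts of the lemma that discrepancy is invisible to the $R$-module structure.

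For part (i), first note that every $R[M]$-linear endomorphism $\phi$ of $\sigma$ is in particular $A[M]$-linear after restricting scalars along $A\to R$, hence commutes with the natural action of $\cZ_{A,M}=\End(\mathrm{id}_{\Rep_A(M)})$ on $\sigma$ by naturality of the Bernstein center; but the $\fR$-structure on $\sigma$ is by definition generated by $R$ together with this natural $\cZ_{A,M}$-action (acting through $\sigma_0$, as spelled out just before Remark~\ref{rem:differentaction}). So $\phi$ is automatically $\fR$-linear, giving $\End_{R[M]}(\sigma)\subseteq \End_{\fR[M]}(\sigma)$; the reverse inclusion is trivial since $R\subseteq \fR$. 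The only subtlety to check carefully is that the $\cZ_{A,M}$-action entering the definition of the $\fR$-module $\sigma$ really is the functorial one on the underlying $A[M]$-module $\sigma_0$ (tensored up), not the a priori different action of $\cZ_{R,M}$ flagged in Remark~\ref{rem:differentaction} — once that is pinned down, naturality of endomorphisms of the identity functor does all the work.

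For part (ii), the same argument applies verbatim with $\sigma$ replaced by the pair $(i_P^G\sigma, i_Q^G\sigma)$: any $R[G]$-linear map $i_P^G(\sigma)\to i_Q^G(\sigma)$ is $A[G]$-linear, hence commutes with the $\cZ_{A,G}$-action on both sides. What we need is that the $\fR$-action on $i_P^G(\sigma)$ (resp.\ $i_Q^G(\sigma)$) factors through $\cZ_{A,G}$ together with $R$. The $M/M^0$-part is built from $A_M\subseteq M$ via $i_P^G$ and so is visible over $R[G]$; the $\cZ_{A,M}$-part acts on $i_P^G(\sigma)$ through its action on $\sigma_0$, and by the hypothesis that the image of $\cZ_{A,M}$ in $\End_{A[M]}(\sigma_0)$ lies in the image of $A$, this $\cZ_{A,M}$-action on $\sigma_0$ — hence on $i_P^G(\sigma)$ — coincides with the ambient $A$-action. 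Therefore the $\fR$-action on each side is generated by $R$ and scalars in $A$, i.e.\ it is the $R$-action, and the two Hom-spaces coincide.

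I expect the main obstacle to be purely bookkeeping: carefully distinguishing the three candidate $\cZ$-actions in play (the functorial $\cZ_{A,M}$-action on $\sigma_0$, the functorial $\cZ_{R,M}$-action on the $R$-module $\sigma$, and — in part (ii) — the $\cZ_{A,G}$-action on the parabolically induced modules) and verifying that the one used to build the $\fR$-module structure is the functorial $\cZ_{A,M}$-action on $\sigma_0$, so that naturality applies. Once that identification is made, both statements are immediate from the fact that an $A$-linear-over-the-group map automatically commutes with endomorphisms of the identity functor, and — in (ii) — from the stated hypothesis collapsing the $\cZ_{A,M}$-action into the scalar action.
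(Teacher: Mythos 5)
Your treatment of part (ii) is essentially the paper's argument and is fine: under the hypothesis, each $z\in\cZ_{A,M}$ acts on $\sigma_0$ through (the image of) $A$, so the defined $\fR$-action on $i_P^G(\sigma)$ and $i_Q^G(\sigma)$ is nothing more than the $R$-action, and the equality of Hom-spaces follows; the initial detour through a $\cZ_{A,G}$-action is unnecessary but harmless.

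Part (i), however, has a genuine gap, and it sits exactly at the point you flagged as ``the only subtlety.'' Naturality of the Bernstein center tells you that an $R[M]$-linear (hence $A[M]$-linear) endomorphism $\phi$ of $\sigma$ commutes with the \emph{functorial} action of $\cZ_{A,M}$ on the twisted module $\sigma$, viewed as an object of $\Rep_A(M)$. But the $\cZ_{A,M}$-part of the $\fR$-structure on $\sigma$ is, by definition, the action through $\sigma_0$ tensored up, and these two actions of $\cZ_{A,M}$ on $\sigma$ do not agree in general --- this is precisely the phenomenon of Remark~\ref{rem:differentaction}, transposed from $\cZ_{R,M}$ to $\cZ_{A,M}$. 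Concretely, take $M=F^{\times}$ and $\sigma_0$ the trivial character: the element of $\cZ_{A,M}$ given by the action of a uniformizer $\varpi$ acts trivially through $\sigma_0$, while its functorial action on $\sigma\cong A[X^{\pm 1}]$ (with $\varpi$ acting by $X$) is multiplication by $X$. So ``$\phi$ commutes with the natural $\cZ_{A,M}$-action on $\sigma$'' does not yield ``$\phi$ commutes with the defined $\fR$-action,'' and your identification of the two actions is false. The missing idea, which is the heart of the paper's proof, is an untwisting step: since $\phi$ is $R$-linear and the untwisted action satisfies $\sigma'(m)=\overline{m}^{-1}\sigma(m)$ with $\overline{m}\in R^{\times}$, the map $\phi$ is also $M$-equivariant for the untwisted module $\sigma'=\sigma_0\otimes_A R$; on $\sigma'$, which is a direct sum of copies of $\sigma_0$ as an $A[M]$-module, the functorial center action (the paper phrases this via the homomorphism $\fR\to\cZ_{R,M}$) does coincide with the defined $\fR$-action, which in turn equals the defined $\fR$-action on $\sigma$. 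Applying naturality to $\sigma'$ --- not to $\sigma$ --- is what closes the argument.
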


\begin{proof}
(i) Let $\phi:\sigma\to \sigma$ be $R[M]$-equivariant; we wish to show it must also be $\fR$-equivariant. Let $\sigma'$ be the $R[M]$-module whose $R$-module structure is the same as that of $\sigma$, but where the $M$-action is given by $\sigma'(m)(v\otimes 1) = \sigma_0(m)\otimes 1$. Then $\phi$ defines an $R$-module endomorphism $\sigma'\to \sigma'$. Given $v\otimes 1\in \sigma'$ and $m\in M$, we have 
\begin{align*}
\phi(\sigma'(m)(v\otimes 1)) &=\phi(\overline{m}^{-1}\sigma(m)(v\otimes 1))\\
&=\overline{m}^{-1}\sigma(m)\phi(v\otimes 1)\text{\ \ \ (by $R[M]$-equivariance)}\\
&=\sigma'(m)\phi(v\otimes 1)\ ,
\end{align*}
hence $\phi$ defines an $R[M]$-module homomorphism $\sigma'\to \sigma'$. Let $\cZ_{R,M}$ be the center of the category $\Rep_R(M)$. There is a natural ring homomorphism $\fR\to \cZ_{R,M}$, through which the action of $\fR$ on $\sigma'$ factors. Because $\phi:\sigma'\to \sigma'$ is a morphism in $\Rep_R(M)$, it must commute with the natural action of $\fR$ on $\sigma'$. But the action of $\fR$ on $\sigma'$ is the same as action of $\fR$ on $\sigma$, by definition.

(ii) The action of $\fR$ on $i_P^G(\sigma)$ is as follows. Take $f:G\to \sigma$ an arbitrary element of $i_P^G(\sigma)$, $z\in \cZ_{A,M}$, and $m\in M$, the action of $z\overline{m}\in \fR$ is $(z\overline{m}\cdot f)(g) = (z\overline{m})f(g)$. Let $\lambda\in A$ be the scalar by which $z$ acts on $\sigma_0$, so that $(z\overline{m})f(g) = \lambda\overline{m}f(g)$. Since any element $\phi\in \Hom_{R[M]}(i_P^G(\sigma),i_Q^G(\sigma))$ commutes with scaling by $\lambda\overline{m}\in R$, the statement follows.
\end{proof}

Suppose $\sigma_0$ is finitely generated. We claim $\textup{End}_{R[M]}(\sigma) \cong \textup{End}_{A[M]}(\sigma_0) \otimes_A R$ and simply sketch the proof because this is a variation of Proposition~\ref{prop:free-extension-scalars-hom-fg} since the $M$-actions on $\sigma = \sigma_0 \otimes_A \Psi$ and $\sigma_0 \otimes_A R$ are not quite the same. We consider the embedding $i : v_0 \in \sigma_0 \mapsto v_0 \otimes_A 1_R \in \sigma$. It is not $M$-equivariant, but because $R$ is free over $A$, the restriction to $\sigma_0$ gives an isomorphism
$$\varphi \in \textup{End}_{R[M]}(\sigma) \mapsto \varphi \circ i \in \textup{Hom}_{A[M]}(\sigma_0, \sigma_0 \otimes_A R).$$
Indeed $\varphi(i(\sigma_0(m) v_0)) = \bar{m}^{-1} \varphi( \sigma(m) (v_0 \otimes_A 1_R)) = (\sigma_0(m) \otimes_A 1_R) \varphi(i(v_0))$. We prove the right-hand side is isomorphic to $\textup{End}_{A[M]}(\sigma_0) \otimes_A R$ as in Proposition~\ref{prop:free-extension-scalars-hom-fg} and therefore the natural ring morphism $\textup{End}_{A[M]}(\sigma_0) \otimes_A R \to \textup{End}_{R[M]}(\sigma)$ must be an isomorphism.

If we assume the natural map $A\to \End_{A[M]}(\sigma_0)$ is an isomorphism, then $R\to \End_{R[M]}(\sigma)$ is also an isomorphism. In this context, Corollary~\ref{cor:sig_injective} tells us that taking the $(\sigma,P,Q)$-signature (together with Frobenius reciprocity) defines an injection of $R$-modules  $$\sig_{PQ}:\Hom_{R[G]}(i_P^G(\sigma),i_Q^G(\sigma)) \hookrightarrow \End_{R[M]}(\sigma)=R.$$ The image of this map defines an ideal of $R$.

When we take $Q=P$, we deduce as in Corollary~\ref{cor:schurslemmafrakR} that $J_{P|P}$ is the identity and Schur's lemma holds for the family $i_P^G(\sigma)$ (without any localization):
\begin{corollary}\label{cor:genericschur}
Suppose $\sigma_0$ is finitely generated and $A \to \End_{A[M]}(\sigma_0)$ is an isomorphism. Then the natural map $R \to \End_{R[G]}(i_P^G(\sigma))$ is an isomorphism.
\end{corollary}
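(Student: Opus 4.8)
The plan is to reduce the statement to the $\fR$-linear isomorphism already recorded in Corollary~\ref{cor:schurslemmafrakR}, and then collapse $\fR$ to $R$ using the Schur hypothesis on $\sigma_0$ together with Lemma~\ref{lem:equivariance}. Most of the ingredients are in place in the paragraph preceding the statement, so the proof is mainly bookkeeping about which identifications are compatible.

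First I would observe that under the hypothesis $A\overset{\sim}{\to}\End_{A[M]}(\sigma_0)$, the sketch given just above the statement identifies $\End_{R[M]}(\sigma)\cong \End_{A[M]}(\sigma_0)\otimes_A R$ (a variant of Proposition~\ref{prop:free-extension-scalars-hom-fg}, accounting for the twisted $M$-action on $\sigma=\sigma_0\otimes_A\Psi$), so the natural map $R\to \End_{R[M]}(\sigma)$ is an isomorphism. By Lemma~\ref{lem:equivariance}(i) we have $\End_{\fR[M]}(\sigma)=\End_{R[M]}(\sigma)$, hence $R\to \End_{\fR[M]}(\sigma)$ is also an isomorphism. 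Moreover the image of $\cZ_{A,M}$ in $\End_{A[M]}(\sigma_0)=A$ is trivially contained in the image of $A$, so Lemma~\ref{lem:equivariance}(ii) applied with $Q=P$ gives $\End_{\fR[G]}(i_P^G(\sigma))=\End_{R[G]}(i_P^G(\sigma))$.

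Next I would invoke Corollary~\ref{cor:schurslemmafrakR}: the composite of Frobenius reciprocity with $\sig_{PP}$ is a two-sided inverse to the induction map $\End_{\fR[M]}(\sigma)\to \End_{\fR[G]}(i_P^G(\sigma))$, so the latter is an isomorphism. Transporting through the two identifications of the previous paragraph, the induction map $\End_{R[M]}(\sigma)\to \End_{R[G]}(i_P^G(\sigma))$ is an isomorphism of $R$-algebras. Precomposing with $R\overset{\sim}{\to}\End_{R[M]}(\sigma)$, and noting that this composite is exactly the structure map $R\to \End_{R[G]}(i_P^G(\sigma))$ because $R$ acts on $i_P^G(\sigma)$ pointwise through its action on $\sigma$, we obtain the claim. (Equivalently, one can phrase the whole thing over $R$ directly: by Corollary~\ref{cor:sig_injective}, $\sig_{PP}$ is injective; exactly as in the $\fR$-case, $J_{P|P}$ corresponds under Frobenius reciprocity to $\mathrm{id}_{i_P^G(\sigma)}$ and has signature $\mathrm{id}_\sigma$, so $\sig_{PP}$ is also surjective, hence the natural map $\End_{R[M]}(\sigma)\to \End_{R[G]}(i_P^G(\sigma))$ is an isomorphism.)

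There is no genuine analytic or combinatorial obstacle here; the real content sits in Theorem~\ref{thm:intertwining} (vanishing of $\Hom_{\fR[M]}(I_w(\sigma),\sigma)$ for $w\neq 1$), which feeds Corollary~\ref{cor:schurslemmafrakR}, and in the free-extension-of-scalars computations of Section~\ref{sec:notation}. The one point that deserves to be written carefully is the compatibility of the three maps in play — the induction map over $\fR$, the induction map over $R$, and the structure map $R\to \End_{R[G]}(i_P^G(\sigma))$ — under the identifications supplied by Lemma~\ref{lem:equivariance}; each is ``the obvious map'' on underlying sets, so this is routine but should be spelled out so that Corollary~\ref{cor:schurslemmafrakR} can be applied verbatim.
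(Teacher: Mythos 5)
Your proposal is correct and follows essentially the same route as the paper: the paper deduces the statement from the identification $\End_{R[M]}(\sigma)\cong \End_{A[M]}(\sigma_0)\otimes_A R = R$, Lemma~\ref{lem:equivariance} (to pass between $\fR$-linear and $R$-linear Hom/End spaces), and the argument of Corollary~\ref{cor:schurslemmafrakR} with $Q=P$ (injectivity of $\sig_{PP}$ plus surjectivity via $J_{P|P}$ having signature the identity). Your care about compatibility of the induction and structure maps under these identifications is exactly the bookkeeping the paper leaves implicit.
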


As in Section~\ref{sec:localizing}, the ideal
$$I_\sigma^{QP} = \ker\bigg(R[A_M]\to \End_{R[M]}((r_Q^Gi_P^G/F_{QP}^{<1})(\sigma))\bigg)$$
provides a source of intertwining operators; given $\beta$ in $I_\sigma^{QP}$ we denote by $J_{\beta}$ the corresponding intertwining operator.

\begin{definition}
Let $S$ be the multiplicative subset of $R = A[M/M^0]$ generated by elements of the form $$c_d\overline{m}^d + c_{d-1}\overline{m}^{d-1} + \cdots + c_1\overline{m}+c_0,\ \ m\in M\cap G^0,\ c_i\in A$$ such that both $c_d$ and $c_0$ are units in $A$.
\end{definition}

As in Section \ref{sec:localizing} we deduce the following:

\begin{corollary}\label{cor:rationalintertwining}
Suppose $\sigma_0$ is finitely generated and $A\to\End_{A[M]}(\sigma_0)$ is an isomorphism. There exists $\beta\in I_\sigma^{QP}$ such that $\sig_{QP}(J_{\beta})$ acts on $\sigma$ by a scalar $b$ in the multiplicative set $S$. In particular, we get an isomorphism of $R[1/b]$-modules
$$\begin{array}{ccl} \Hom_{R[1/b][G]}(i_P^G(\sigma[1/b]),i_Q^G(\sigma[1/b])) & \overset{\sim}{\leftrightarrow} & R[1/b] \\
J& \mapsto & \sig_{PQ}(J)\\
r \cdot J_{Q|P}(\sigma_0) & \mapsfrom & r 
\end{array}$$
\end{corollary}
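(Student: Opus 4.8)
The plan is to deduce the corollary from Theorem~\ref{thm:image} --- which was proved over $\fR=\cZ_{A,M}[M/M^0]$ --- by pushing its conclusion down along the ring homomorphism furnished by the Schur hypothesis, and then to localize exactly as in Corollary~\ref{cor:isomorphism}. The only genuinely new point is that the element $\beta_0\in\fI_\sigma^{QP}$ produced by Theorem~\ref{thm:image} transports to an element of $I_\sigma^{QP}\subset R[A_M]$; the reason one cannot simply re-run Section~\ref{sec:proofs} with $A$ in place of $\cZ_{A,M}$ is explained at the end.

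First I would unpack the hypothesis. Since $A\xrightarrow{\sim}\End_{A[M]}(\sigma_0)$, the natural action homomorphism $\cZ_{A,M}\to\End_{A[M]}(\sigma_0)=A$ is a retraction of the structure map $A\to\cZ_{A,M}$, hence surjective; write $\lambda_0:\cZ_{A,M}\twoheadrightarrow A$ for it and $\Lambda:=\lambda_0\otimes\mathrm{id}:\fR\twoheadrightarrow R$. Because each $z\in\cZ_{A,M}$ acts on $\sigma_0$ by the scalar $\lambda_0(z)$, the ``different'' $\fR$-action on $\sigma$ of Section~\ref{sec:unramifiedtwisting} (namely $z\overline m\colon v\otimes1\mapsto zv\otimes\overline m$) is just the $R$-action restricted along $\Lambda$. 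Since parabolic induction, parabolic restriction, the subquotients of the geometric filtration, and the Jacquet integral are all functorial in the ring acting pointwise on the coefficients, the $\fR$-actions on $r_Q^Gi_P^G(\sigma)$, on $I_1(\sigma)$ and on $(r_Q^Gi_P^G/F_{QP}^{<1})(\sigma)$ likewise factor through $\Lambda$. As $\Lambda$ is surjective, $\fR$-linearity coincides with $R$-linearity on all of these modules; hence $\Hom_{\fR[M]}(r_Q^Gi_P^G\sigma,\sigma)=\Hom_{R[M]}(r_Q^Gi_P^G\sigma,\sigma)$, and $\fI_\sigma^{QP}$ is the preimage of $I_\sigma^{QP}$ under $\Lambda[A_M]:\fR[A_M]\twoheadrightarrow R[A_M]$.

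Next I would apply Theorem~\ref{thm:image} to get $\beta_0\in\fI_\sigma^{QP}$ with $\sig_{PQ}(J_{\beta_0})$ acting on $\sigma$ by a scalar $b_0\in\fS$, and set $\beta:=\Lambda(\beta_0)\in I_\sigma^{QP}$. The integral formula defining $J_{\beta_0}$ depends only on the action of $\beta_0$ on $r_Q^Gi_P^G(\sigma)$, which by the previous paragraph is the action of $\beta$, so $J_{\beta_0}=J_\beta$ in the common $\Hom$-space. Under the identifications $\End_{\fR[M]}(\sigma)=\End_{R[M]}(\sigma)=R$ (Lemma~\ref{lem:equivariance}(i) together with the Schur identification $R\xrightarrow{\sim}\End_{R[M]}(\sigma)$ recalled before Corollary~\ref{cor:genericschur}) the scalar $b_0\in\fR$ is sent to $b:=\Lambda(b_0)\in R$, and $b\in S$ because $\Lambda$ carries the generators of $\fS$ (elements $c_d\overline m^d+\cdots+c_0$ with $m\in M\cap G^0$ and $c_0,c_d\in\cZ_{A,M}^\times$) to generators of $S$ (units map to units). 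This is the first assertion of the corollary. For the ``in particular'' part I would localize at $b$ just as in Corollary~\ref{cor:isomorphism}: by Corollary~\ref{cor:sig_injective} and the Schur identification, $\sig_{PQ}$ is an injection of $R$-modules $\Hom_{R[G]}(i_P^G\sigma,i_Q^G\sigma)\cong\Hom_{R[M]}(r_Q^Gi_P^G\sigma,\sigma)\hookrightarrow R$; since $\sigma_0$ is finitely generated over the Noetherian ring $A$, the module $r_Q^Gi_P^G(\sigma)$ is finitely presented over $R$ (parabolic induction and restriction preserve finite generation, and finitely generated implies finitely presented over a Noetherian ring), so Proposition~\ref{extension_of_scalars_flat_lam_prop} lets us tensor this injection with the flat $R$-algebra $R[1/b]$, giving an injection $\sig_{PQ}:\Hom_{R[1/b][G]}(i_P^G\sigma[1/b],i_Q^G\sigma[1/b])\hookrightarrow R[1/b]$; its image is an ideal of $R[1/b]$ containing $\sig_{PQ}(J_\beta)=b$, which is a unit there, so the image is everything and $\sig_{PQ}$ is an isomorphism. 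Its inverse is $R[1/b]$-linear, hence sends $r$ to $r\cdot J_{Q|P}(\sigma_0)$ where $J_{Q|P}(\sigma_0):=\sig_{PQ}^{-1}(1)$ is the preimage of the identity (agreeing, after localization, with the canonical operator of Section~\ref{sec:localizing}); this is the displayed bijection.

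I expect the only delicate step to be the descent in the second and third paragraphs. One cannot obtain this corollary by simply repeating the proof of Theorem~\ref{thm:image} with $A$ replacing $\cZ_{A,M}$, because the endomorphism rings $\End_{A[M]}(r^M_{M\cap w^{-1}Q}(\sigma_0))$ that feed the Cayley--Hamilton step there are module-finite over $\cZ_{A,M}$ but need not be module-finite over $A$, even when $\sigma_0$ satisfies Schur's lemma. So $\beta_0$ must be constructed over the Bernstein center and only then collapsed to $R$ via $\Lambda$, and checking that this collapse is compatible with $\fI_\sigma^{QP}$, with $J_{\beta_0}$, and with the multiplicative set $\fS$ is the real content; everything else is bookkeeping already in Sections~\ref{sec:unramifiedtwisting} and~\ref{sec:proofs}.
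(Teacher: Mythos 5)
Your proposal is correct and follows the paper's intended route: the paper proves the theorems over $\fR$ and then, in Section~\ref{sec:schur}, reduces to the $R$-statements via the Schur hypothesis (Lemma~\ref{lem:equivariance} and the identification $\End_{R[M]}(\sigma)=R$), deducing the corollary ``as in Section~\ref{sec:localizing}'' by the same localization argument you give. Your explicit descent along the surjection $\cZ_{A,M}\twoheadrightarrow A$ (hence $\fR\twoheadrightarrow R$, carrying $\fI_\sigma^{QP}$ into $I_\sigma^{QP}$ and $\fS$ into $S$) just spells out the step the paper leaves implicit, and your closing remark about why one cannot rerun Theorem~\ref{thm:image} over $A$ matches the paper's stated rationale for working over the Bernstein center.
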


\section{The Harish-Chandra $j$-function}\label{sec:jfunction}
For a finitely generated $A[M]$-module $\sigma_0$, we define the Harish-Chandra $j$-function as follows:
$$j_P^G(\sigma_0) = J_{\overline{P}|P}(\sigma_0)\circ J_{P|\overline{P}}(\sigma_0)\in \End_{\fS^{-1}\fR[G]}(i_P^G(\fS^{-1}\sigma)) \overset{\text{Cor~\ref{cor:schurslemmafrakR}}}{\cong} \End
_{\fS^{-1}\fR[M]}(\fS^{-1}\sigma).$$ Note that $j_P^G(\sigma_0)$ could also be defined in terms of $\sigma[1/b]$ over the ring $\fR[1/b]$ where $b$ is the product $b_{\overline{P}P}b_{P\overline{P}}$ of elements $b_{\overline{P}P}$ and $b_{P\overline{P}}$ chosen to define $J_{\overline{P}|P}(\sigma_0)$ and $J_{P|\overline{P}}(\sigma_0)$, respectively. If we suppose in addition that the map $A\to \End_{A[M]}(\sigma_0)$ is an isomorphism, then $j_P^G(\sigma_0)$ is given by a scalar in $R[1/b]\subset S^{-1}R$ by Corollary~\ref{cor:genericschur}. 

\begin{rem}
Both $J_{Q|P}(\sigma_0)$ and $j(\sigma_0)$, as we have defined them, depend on choices of Haar measures on $U_{\overline{P}}$ and $U_P$.
\end{rem}

\begin{prop}\label{prop:centrality}
$j_P^G(\sigma_0)$ lies in the center of $\End_{\fS^{-1}\fR[G]}(i_P^G(\fS^{-1}\sigma))$.
\end{prop}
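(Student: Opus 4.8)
The plan is to deduce centrality of $j_P^G(\sigma_0)$ from a naturality property of the canonical intertwining operators with respect to \emph{arbitrary} endomorphisms of $\fS^{-1}\sigma$ over $\fS^{-1}\fR[M]$ (not just those coming from morphisms of $\sigma_0$ over $A$), and then use that $\End_{\fS^{-1}\fR[G]}(i_P^G(\fS^{-1}\sigma))$ is the image of $\End_{\fS^{-1}\fR[M]}(\fS^{-1}\sigma)$ under $i_P^G$.

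\emph{Step 1: a naturality lemma.} I would first prove that for every $\phi\in\End_{\fS^{-1}\fR[M]}(\fS^{-1}\sigma)$ and all parabolics $P$, $Q$ with common Levi $M$,
\[
i_Q^G(\phi)\circ J_{Q|P}(\sigma_0)=J_{Q|P}(\sigma_0)\circ i_P^G(\phi)\quad\text{as maps }i_P^G(\fS^{-1}\sigma)\to i_Q^G(\fS^{-1}\sigma).
\]
Both sides are $\fS^{-1}\fR[G]$-equivariant, hence so is their difference $D$. Since $\widetilde{F}_{QP}^{<1}(\fS^{-1}\sigma)$ is cut out by a support condition and $\supp(i_P^G(\phi)(f))\subseteq\supp(f)$, the operator $i_P^G(\phi)$ preserves $\widetilde{F}_{QP}^{<1}(\fS^{-1}\sigma)$; moreover the integral over $U_Q\cap U_{\overline{P}}$ is a finite sum on $\widetilde{F}_{QP}^{<1}(\fS^{-1}\sigma)$, hence commutes with $\phi$. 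So, using the characterization of $J_{Q|P}(\sigma_0)$ in Corollary~\ref{cor:characterization}, for $f\in\widetilde{F}_{QP}^{<1}(\fS^{-1}\sigma)$ we get
\[
\bigl(i_Q^G(\phi)\circ J_{Q|P}(\sigma_0)\bigr)(f)(1)=\phi\Bigl(\int_{U_Q\cap U_{\overline{P}}}f(u)\,du\Bigr)=\int_{U_Q\cap U_{\overline{P}}}\phi(f(u))\,du=\bigl(J_{Q|P}(\sigma_0)\circ i_P^G(\phi)\bigr)(f)(1).
\]
Therefore $D(f)(1)=0$ for all $f\in\widetilde{F}_{QP}^{<1}(\fS^{-1}\sigma)$. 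Passing through Frobenius reciprocity, $D$ corresponds to a map $r_Q^Gi_P^G(\fS^{-1}\sigma)\to\fS^{-1}\sigma$ that vanishes on the image $F_{QP}^{<1}(\fS^{-1}\sigma)$ of $\widetilde{F}_{QP}^{<1}(\fS^{-1}\sigma)$, so its $(\sigma,P,Q)$-signature is $0$; since $\sig_{PQ}$ is injective over $\fS^{-1}\fR$ (Corollary~\ref{cor:sig_injective}, whose injectivity over $\fS^{-1}\fR$ is exactly what underlies the definition of $J_{Q|P}(\sigma_0)$), we conclude $D=0$.

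\emph{Step 2: conclusion.} By Corollary~\ref{cor:schurslemmafrakR}, localized along the flat map $\fR\to\fS^{-1}\fR$ — this is the isomorphism already used to define $j_P^G$ — every $\psi\in\End_{\fS^{-1}\fR[G]}(i_P^G(\fS^{-1}\sigma))$ has the form $\psi=i_P^G(\phi)$ for a unique $\phi\in\End_{\fS^{-1}\fR[M]}(\fS^{-1}\sigma)$. Writing $j_P^G(\sigma_0)=J_{P|\overline{P}}(\sigma_0)\circ J_{\overline{P}|P}(\sigma_0)$ (as an endomorphism of $i_P^G(\fS^{-1}\sigma)$, cf. the introduction) and applying the lemma twice, once with the pair $(P,\overline{P})$ and once with $(\overline{P},P)$,
\[
j_P^G(\sigma_0)\circ\psi=J_{P|\overline{P}}\circ J_{\overline{P}|P}\circ i_P^G(\phi)=J_{P|\overline{P}}\circ i_{\overline{P}}^G(\phi)\circ J_{\overline{P}|P}=i_P^G(\phi)\circ J_{P|\overline{P}}\circ J_{\overline{P}|P}=\psi\circ j_P^G(\sigma_0),
\]
so $j_P^G(\sigma_0)$ commutes with every $\psi$ and is therefore central.

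The main obstacle is Step~1. The genuinely delicate points there are bookkeeping rather than conceptual: one must check that the $(\sigma,P,Q)$-signature is still defined and injective after inverting $\fS$ (which uses that $i_P^G$, $r_Q^G$ and the relevant $\Hom$-modules commute with this flat localization, so that Theorem~\ref{thm:intertwining} and Corollary~\ref{cor:sig_injective} survive), that $i_P^G(\phi)$ preserves the support-defined submodule $\widetilde{F}_{QP}^{<1}(\fS^{-1}\sigma)$ for arbitrary $\fS^{-1}\fR[M]$-linear $\phi$, and that the displayed equality of values at $1$ on $\widetilde{F}_{QP}^{<1}(\fS^{-1}\sigma)$ really does force the difference of the two $G$-equivariant maps to vanish. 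Once the lemma is established, Step~2 is purely formal.
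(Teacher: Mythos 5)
Your proof is correct, and it reaches the conclusion by a different mechanism than the paper's. The paper reduces, via Frobenius reciprocity and the equivariance of $\sig_{PP}$ under precomposition by $\End_{\fS^{-1}\fR[M]}(\fS^{-1}\sigma)$, to showing that $\bar{j}:\overline{f}\mapsto j_P^G(\sigma_0)(f)(1)$ commutes with every such $\phi$, and verifies this by the explicit formula for $\bar{j}$ in terms of $\beta$, $\beta'$, $b$, $b'$ from Theorem~\ref{thm:image}: since $\beta,\beta'$ lie in the group ring of $A_M$ (hence commute with any $M$-equivariant $\phi$) and $b,b'$ are scalars, the commutation is a direct computation. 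You instead prove a naturality lemma for $J_{Q|P}(\sigma_0)$ with respect to \emph{all} of $\End_{\fS^{-1}\fR[M]}(\fS^{-1}\sigma)$, strengthening Proposition~\ref{prop:compatibility}(i); your verification (support preservation by $i_P^G(\phi)$, finiteness of the integral on $\widetilde{F}_{QP}^{<1}$, the characterization of Corollary~\ref{cor:characterization}, and injectivity of $\sig_{PQ}$ after inverting $\fS$, which is available since the paper establishes $\sig_{PQ}$ as an isomorphism over $\fS^{-1}\fR$) is sound and parallels the paper's own proof of Proposition~\ref{prop:compatibility}; you then conclude formally through the localized Corollary~\ref{cor:schurslemmafrakR}, which is exactly the identification the paper itself uses to define $j_P^G$. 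Your argument is closest in structure to the alternative ``more formal'' proof the paper sketches after Proposition~\ref{prop:compatibility_of_j}, but that proof only obtains commutation with the subring $\fri_P^G(\End_{A[M]}(\sigma_0))$ and must then invoke $\End_{\fR[M]}(\sigma)\cong\End_{A[M]}(\sigma_0)\otimes_A R$ to see this subring generates $\End_{\fS^{-1}\fR[M]}(\fS^{-1}\sigma)$; your stronger naturality lemma bypasses that generation step, at the price of reproving naturality over the larger ring via the signature formalism rather than quoting Proposition~\ref{prop:compatibility} directly. Both routes deliver the same statement: the paper's computation is self-contained at the level of the $\beta$-twisted integrals, while yours is choice-free (no $\beta,\beta',b,b'$) and more functorial. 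One cosmetic point: you compose $j_P^G(\sigma_0)=J_{P|\overline{P}}(\sigma_0)\circ J_{\overline{P}|P}(\sigma_0)$ as in the introduction, which is the order consistent with viewing it as an endomorphism of $i_P^G(\fS^{-1}\sigma)$; this does not affect the argument.
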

\begin{proof}
We will show the image of $j_P^G(\sigma_0)$ in the composite $$\End_{\fS^{-1}\fR[G]}(i_P^G(\fS^{-1}\sigma))\cong \Hom_{\fS^{-1}\fR[M]}(r_P^Gi_P^G(\fS^{-1}\sigma),\fS^{-1}\sigma)\xrightarrow{\sig_{PP}} \End_{\fS^{-1}\fR[M]}(\fS^{-1}\sigma)$$ of Corollary~\ref{cor:schurslemmafrakR} lands in the center of $\End_{\fS^{-1}\fR[M]}(\fS^{-1}\sigma)$. The image of $j_P^G(\sigma_0)$ under the first isomorphism (Frobenius reciprocity) is the map $$\bar{j}:r_P^Gi_P^G(\fS^{-1}\sigma)\to \sigma$$ induced by $f\mapsto j_P^G(\sigma_0)(f)(1)$ for $f\in i_P^G(\sigma)$; we need to show $\sig_{PP}(\bar{j})$ is central. 

The ring $\End_{\fS^{-1}\fR[M]}(\sigma)$ acts on $i_P^G(\sigma)$ via $(\phi\cdot f)(g) = \phi(f(g))$, $\phi\in \End(\sigma)$, $f\in i_P^G(\sigma)$, and also on $r_P^Gi_P^G(\sigma)$ by functoriality of parabolic restriction. This induces a right action of $\End_{\fS^{-1}\fR[M]}(\sigma)$ on $\Hom_{\fS^{-1}\fR[M]}(r_P^Gi_P^G(\sigma),\sigma)$ by precomposition. By construction, $\sig_{PP}$ (in fact, $\sig_{PQ}$ for any $Q$) is equivariant for this action, namely
$$\sig_{PP}(J\cdot\phi) = \sig_{PP}(J)\phi$$ for $\phi\in \End_{\fS^{-1}\fR[M]}(\sigma)$,  $J\in \Hom_{\fS^{-1}\fR[M]}(r_P^Gi_P^G(\sigma),\sigma),$ so it suffices to prove $\bar{j}\cdot \phi = \phi\cdot \bar{j}$ for $\phi\in \End_{\fS^{-1}\fR[M]}(\sigma)$.

Let $b\in \fR$ and $\beta\in \fR[A_M]$ (respectively, $b'$ and $\beta'$) be the elements chosen to define $J_{\overline{P}|P}(\sigma_0)$ (respectively, $J_{P|\overline{P}}$), so we have
$$\bar{j}(\bar{f}) = j_P^G(\sigma_0)(f)(1) = \frac{1}{b'}\int_{U_P}\beta'\left(\frac{1}{b}\int_{U_{\overline{P}}}(\beta f)(uu')du\right)du'.$$ Then since $\beta$ and $\beta'$ are in the group ring of the center of $M$ and $b$, $b'$ are scalars, we can check directly:
\begin{align*}
(\bar{j}\cdot \phi)(\bar{f}) &= \bar{j}(\phi\bar{f})\\
&= \frac{1}{b'}\int_{U_P}\beta'\left(\frac{1}{b}\int_{U_{\overline{P}}}(\beta \phi f)(uu')du\right)du'\\
&= \phi\left(\frac{1}{b'}\int_{U_P}\beta'\left(\frac{1}{b}\int_{U_{\overline{P}}}(\beta f)(uu')du\right)du'\right)\\
&= \phi(\bar{j}(\bar{f}))\\
&= (\phi\cdot \bar{j})(\bar{f})
\end{align*}

\end{proof}

We proceed in a manner similar to \cite[Appendix C]{dhkm_conjecture} to study the properties of the $j$-function.

The argument of \cite[IV.3(1)]{waldspurger} shows that, if $P'$ is standard parabolic subgroup, $$j_{P'}^G(\sigma_0)J_{P'|P}(\sigma_0) = j_P^G(\sigma_0)J_{P'|P}(\sigma_0).$$ Taking the $(\sigma,P,P')$ signature of both sides, we get \begin{align*}
\sig_{PP'}\left(j_{P'}^G(\sigma_0)J_{P'|P}(\sigma_0)\right) & = \sig_{PP'}\left(j_P^G(\sigma_0)J_{P'|P}(\sigma_0)\right)\\
j_{P'}^G(\sigma_0)\sig_{PP'}\left(J_{P'|P}(\sigma_0)\right) & = j_P^G(\sigma_0)\sig_{PP'}\left(J_{P'|P}(\sigma_0)\right)\\
j_{P'}^G(\sigma_0)\cdot \text{id} &= j_{P}^G(\sigma_0)\cdot \text{id}\ ,
\end{align*}
so $j_P^G(\sigma_0)$ is independent of the choice of standard parabolic $P$. As such, we can drop the subscript $P$ from our notation. If $G$ is clear from the context, we sometimes omit the superscript $G$.

From Proposition~\ref{prop:compatibility} and \ref{prop:bernsteinscalarextension}, we deduce the following:
\begin{prop}\label{prop:compatibility_of_j} Let $\sigma_0$ be a finitely generated $A[M]$-module. In the notation of Section~\ref{sec:compatibility_of_intertwiningoperators}, we have
\begin{enumerate}
\item Let $q_0:\sigma_0\to \sigma_0'$  be a homomorphism of finitely generated $A[M]$-modules and $q:\sigma\to \sigma'$ be the corresponding homomorphism of $\fR[M]$-modules. Then $j(\sigma_0)\in Z(\End_{\fR[M]}(\fS^{-1}\sigma))$ stabilizes the kernel of the homomorphism $\fS^{-1}\sigma\to \fS^{-1}\sigma'$ and the endomorphism it induces on the image agrees with $j(\sigma_0')$.
\item Given $A\to A'$ any homomorphism, $$j(\sigma_0\otimes_AA') = j(\sigma_0)\otimes \textup{id}\in Z(\End_{\fR'[M]}({\fS'}^{-1}(\sigma\otimes_RR'))).$$
\item Given $\lambda:\cZ_{A,M}\to B$ and the induced morphism $\widetilde{\lambda}:\fS^{-1}\fR \to \cS^{-1}\cB$, we have
$$j(\sigma_0\otimes_{\cZ_{A,M},\lambda}B) = j(\sigma_0)\otimes\textup{id}\in Z(\End_{\cS^{-1}\cB[M]}(\cS^{-1}(\sigma\otimes_{\fR,\widetilde{\lambda}}\cB))).$$
\end{enumerate}
\end{prop}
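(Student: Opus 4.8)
The plan is to deduce all three statements directly from the corresponding compatibility results for intertwining operators, namely Propositions~\ref{prop:compatibility} and~\ref{prop:bernsteinscalarextension}, by unwinding the definition $j(\sigma_0) = J_{\overline{P}|P}(\sigma_0)\circ J_{P|\overline{P}}(\sigma_0)$ through the Frobenius reciprocity isomorphism of Corollary~\ref{cor:schurslemmafrakR}. For part (ii), I would start from the commuting square of Proposition~\ref{prop:compatibility}(ii) for the pair $(P,\overline P)$ and the pair $(\overline P, P)$; composing the two squares vertically along $i_P^G(\iota)$, $i_{\overline P}^G(\iota)$, $i_P^G(\iota)$ shows that $J_{\overline P|P}(\sigma_{0,A'})\circ J_{P|\overline P}(\sigma_{0,A'})\circ i_P^G(\iota) = i_P^G(\iota)\circ J_{\overline P|P}(\sigma_0)\circ J_{P|\overline P}(\sigma_0)$, i.e.\ $j(\sigma_{0,A'})\circ i_P^G(\iota) = i_P^G(\iota)\circ j(\sigma_0)$. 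Since $j(\sigma_0)$ and $j(\sigma_{0,A'})$ are central (Proposition~\ref{prop:centrality}) and correspond to elements of $Z(\End_{\fS^{-1}\fR[M]}(\fS^{-1}\sigma))$ and $Z(\End_{{\fS'}^{-1}\fR'[M]}({\fS'}^{-1}\sigma_{R'}))$ respectively under Corollary~\ref{cor:schurslemmafrakR}, and since $\iota$ is the canonical base-change map $\fS^{-1}\sigma\to {\fS'}^{-1}\sigma_{R'}$ which is compatible with the Frobenius reciprocity isomorphisms, the displayed identity $j(\sigma_0\otimes_A A') = j(\sigma_0)\otimes\mathrm{id}$ follows. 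Part (iii) is identical, replacing $A\to A'$ by $\lambda:\cZ_{A,M}\to B$ and Proposition~\ref{prop:compatibility}(ii) by Proposition~\ref{prop:bernsteinscalarextension}.

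For part (i), the same vertical composition of the two squares of Proposition~\ref{prop:compatibility}(i)---one for $J_{P|\overline P}$ and one for $J_{\overline P|P}$, with vertical arrows $i_P^G(q)$, $i_{\overline P}^G(q)$, $i_P^G(q)$---yields the intertwining relation $j(\sigma_0')\circ i_P^G(q) = i_P^G(q)\circ j(\sigma_0)$ of maps $i_P^G(\fS^{-1}\sigma)\to i_P^G(\fS^{-1}\sigma')$. Translating through Frobenius reciprocity and the Schur isomorphism of Corollary~\ref{cor:schurslemmafrakR}, and using that induction is exact (so that $i_P^G$ applied to the exact sequence $0\to \ker q \to \fS^{-1}\sigma \to \operatorname{im} q \to 0$ stays exact), this says precisely that the central element $j(\sigma_0)\in Z(\End_{\fS^{-1}\fR[M]}(\fS^{-1}\sigma))$ carries $\ker(q)$ into $\ker(q)$: indeed if $q(v)=0$ then $q(j(\sigma_0)v) = j(\sigma_0')q(v) = 0$, where I have used the relation at the level of $M$-module endomorphisms after unwinding the induced-module relation (one may equally argue directly at the level of $\sigma$ using that $J_{P|\overline P}$, $J_{\overline P|P}$ are natural transformations, so each is given on $\widetilde F^{<1}$ by an integral that commutes with $q$). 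Consequently $j(\sigma_0)$ descends to an endomorphism of $\operatorname{im}(q)\cong \fS^{-1}\sigma/\ker(q)$, and the relation $j(\sigma_0')\circ q = q\circ j(\sigma_0)$ identifies this induced endomorphism with the restriction of $j(\sigma_0')$ to the $\fS$-localized submodule $\operatorname{im}(q)\subseteq \fS^{-1}\sigma'$.

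The only genuine subtlety---and thus the step I would write most carefully---is the bookkeeping that transports identities between $\End_{\fS^{-1}\fR[G]}(i_P^G(\fS^{-1}\sigma))$ and $\End_{\fS^{-1}\fR[M]}(\fS^{-1}\sigma)$ along the Schur isomorphism of Corollary~\ref{cor:schurslemmafrakR} while simultaneously base-changing the ring $\fS^{-1}\fR$. One must check that the Schur isomorphism is itself compatible with the base-change maps $\fR\to\fR'$ (resp.\ $\fR\to\cS^{-1}\cB$)---which follows because that isomorphism is realized by the explicit formula $f\mapsto f(1)$ on $\widetilde F_{PP}^{<1}$, an operation visibly compatible with scalar extension---and that the notation $j(\sigma_0)\otimes\mathrm{id}$ in the statement refers to the image of $j(\sigma_0)$ under $Z(\End_{\fS^{-1}\fR[M]}(\fS^{-1}\sigma))\to Z(\End_{{\fS'}^{-1}\fR'[M]}({\fS'}^{-1}\sigma_{R'}))$, which exists and is a ring map by Lemma~\cite[Lem A.2]{dhkm_conjecture} applied to the endomorphism algebra of the finitely generated module $\sigma$. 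Everything else is a formal diagram chase; I do not anticipate any real obstacle once this dictionary is set up, since the heavy lifting (exactness of $\fri_P^G$, the integral description of $J_{Q|P}$ on $\widetilde F^{<1}$, centrality of $j$) has already been done in Sections~\ref{sec:unramifiedtwisting}--\ref{sec:jfunction}.
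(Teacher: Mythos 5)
Your proposal is correct and follows exactly the route the paper intends: the paper gives no separate proof, stating only that the proposition is deduced from Propositions~\ref{prop:compatibility} and~\ref{prop:bernsteinscalarextension}, which is precisely what you elaborate by chaining the two commuting squares for $J_{P|\overline P}$ and $J_{\overline P|P}$ and transporting the result through Corollary~\ref{cor:schurslemmafrakR} and Proposition~\ref{prop:centrality}. The only blemish is a harmless ordering slip in which composite ($J_{\overline P|P}\circ J_{P|\overline P}$ versus $J_{P|\overline P}\circ J_{\overline P|P}$) lives on which induced module, an ambiguity already present in the paper itself and immaterial to the argument.
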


\begin{rem}\label{rem:drop_standard} As in \cite[App C.1]{dhkm_conjecture}, we can also deduce that $J_{Q|P}(\sigma_0)$, and thus $j(\sigma_0)$, are compatible with isomorphisms $G\to G'$ of connected reductive $F$ groups, in a sense that is made precise in loc. cit. It follows that the requirement that $P$ is semistandard can be relaxed, taking into account this notion of compatibility.
\end{rem}

We can give a more formal alternative proof of Proposition~\ref{prop:centrality} by interpreting $j(-)$ as an endomorphism of functors.
Recall the functor $\fri_P^G$ defined in Subsection~\ref{sec:compatibility_of_intertwiningoperators}: it is the composition of the functors $\fri:\Rep_A^{\text{f.g.}}(M)\to \Rep_{\fS^{-1}\fR}(M)$ and $i_P^G : \Rep_{\fS^{-1}\fR}(M) \to \Rep_{\fS^{-1}\fR}(G)$, where $\fri$ is given by $\sigma_0\mapsto \fS^{-1}\sigma$. Proposition~\ref{prop:compatibility_of_j} can be phrased as saying that $\sigma_0\mapsto j(\sigma_0)$ defines an endomorphism of the functor $\fri_P^G$. In particular, $j(\sigma_0)$ commutes with elements of the subring $$\fri_P^G(\textup{End}_{A[M]}(\sigma_0))\subset \End_{\fS^{-1}\fR[G]}(i_P^G(\fS^{-1}\sigma)).$$ By Corollary~\ref{cor:schurslemmafrakR}, the latter is isomorphic to $\End_{\fS^{-1}\fR[M]}(\fS^{-1}\sigma)$, and thus $j(\sigma_0)$ commutes with the subring $\fri(\textup{End}_{A[M]}(\sigma_0))$. But the latter ring generates $\End_{\fS^{-1}\fR[M]}(\fS^{-1}\sigma)$ as a $\fS^{-1}\fR$-algebra. Indeed, by flatness of localization and Proposition \ref{extension_of_scalars_flat_lam_prop}, it is enough to show that $\textup{End}_{A[M]}(\sigma_0)$ generates $\textup{End}_{\mathfrak{R}[M]}(\sigma)$ as an $\mathfrak{R}$-algebra, but Lemma \ref{lem:equivariance} and the discussion thereafter show that $\textup{End}_{\mathfrak{R}[M]}(\sigma) = \textup{End}_A(\sigma_0) \otimes_A R$. We deduce that $j(\sigma_0)$ is central.

Note that if we suppose in addition that $A\to \End_{A[M]}(\sigma_0)$ is an isomorphism, the statements in Proposition~\ref{prop:compatibility_of_j} become, respectively, 
\begin{enumerate}
\item $j(\sigma_0)=j(\sigma_0')$ in $S^{-1}R$,
\item $j(\sigma_0\otimes_AA')$ is the image of $j(\sigma_0)$ under $S^{-1}f:S^{-1}R\to S^{-1}R'$,
\item $j(\sigma_0\otimes_{\cZ_{A,M},\lambda}B)$ is the image of $j(\sigma_0)$ under $\widetilde{\lambda}:\fS^{-1}\fR\to \cS^{-1}\cB$.
\end{enumerate}
In particular, $j(\sigma_0)$ is an invariant of the isomorphism class of $\sigma_0$ when $\sigma_0$ satisfies Schur's lemma. When it doesn't, then given an isomorphism $\sigma_0\cong\sigma_0'$, $j(\sigma_0)$ and $j(\sigma_0')$ will differ by the corresponding isomorphism $\End_{\fS^{-1}\fR[M]}(\fS^{-1}\sigma)\cong \End_{\fS^{-1}\fR[M]}(\fS^{-1}\sigma')$.

\subsection{Invariance by unramified characters of $G$.} Let $\Psi_M = \textup{Spec}(A[M/M^0])$. The group scheme acts on $\textup{Rep}_A(M)$ in the following way. First of all, when $B$ is an $A$-algebra, we consider the functor $V \in \textup{Rep}_A(M) \mapsto V \otimes_A B \in \textup{Rep}_B(M)$ called the extension of scalars. Now, we define an action via the functor of points perspective \textit{i.e.} for any $A$-algebra $B$ we define it on $B$-valued points
$$\begin{array}{ccc}
    \Psi_M(B) \times \textup{Rep}_B(M) & \to & \textup{Rep}_B(M) \\
    (\psi,(\sigma,V)) & \mapsto & (\sigma^\psi,V^\psi)
    \end{array}$$
where $V^\psi = V$ and the $M$-action is given by $\sigma^\psi(m) \cdot v = \psi(m) \sigma(m) \cdot v$ for $m \in M$ and $v \in V$. We may sometimes denote $V^\psi$ by $V \otimes_B \psi$.

We say that $V \in \textup{Rep}_A(M)$ is $\Psi_M$-invariant if, for all $A$-algebras $B$ and all $\psi \in \Psi_M(B)$, the $B[M]$-modules $(V \otimes_A B)^\psi$ and $V \otimes_A B$ are isomorphic. We can also look at the action, obtained by precomposing, of $\Psi_M$ on $A$-linear functors such as $J_{Q|P}$ from $\textup{Rep}_A(M)$ to an $A$-linear category $\mathcal{C}$. Since we have an exact sequence $1 \to (M \cap G^0)/M^0 \to M/M^0 \to G/G^0 \to 1$, the unramified characters of $G$ form a closed group subscheme $\Psi_G \subset \Psi_M$.

\begin{prop} The functor $J_{Q|P}$ is $\Psi_G$-invariant. \end{prop}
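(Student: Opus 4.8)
The plan is to show that twisting by an unramified character $\psi$ of $G$ (as opposed to one of $M$) commutes with the construction of $J_{Q|P}$ because such a twist does not interact with any of the ingredients in the construction. The key point is that an unramified character of $G$ restricts, via $M/M^0 \to G/G^0$, to an unramified character of $M$ that is \emph{trivial on $M \cap G^0$}, and the entire construction of $J_{Q|P}$ — the geometric lemma filtration, the signature $\sig_{PQ}$, the ideal $\fI_\sigma^{QP}$, the element $\beta$, and the scalar $b \in \fS$ — only ever sees the action of elements of $A_M \cap G^0$ and of $\cZ_{A,M}$, never the quotient $M/(M\cap G^0)M^0$ on which a $\Psi_G$-twist acts nontrivially.

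\emph{First} I would set up notation: fix an $A$-algebra $B$ and $\psi \in \Psi_G(B) \subset \Psi_M(B)$. Since $\psi$ is trivial on $M^0$ and factors through $G/G^0$, it is in particular trivial on $M \cap G^0$. For $\sigma_0 \in \Rep_A^{\text{f.g.}}(M)$ and its universal twist $\sigma$, one has a canonical $B[M]$-module isomorphism $(\sigma\otimes_A B)^\psi \cong \sigma \otimes_A B$ after further twisting $\sigma$ itself by $\psi$; more precisely, I would observe that there is a natural isomorphism of $\fS^{-1}\fR$-algebras-with-$M$-action comparing the universal twist of $\sigma_0^\psi$ with that of $\sigma_0$, realized by a relabelling of the polynomial generators of $R = A[M/M^0]$ that is the identity on the subalgebra $R^G = A[(M\cap G^0)/M^0]$ and on $\cZ_{A,M}$. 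Because $\psi$ is trivial on $M \cap G^0$, this isomorphism fixes $\fS$ (whose elements involve only $\overline{m}$ for $m \in M \cap G^0$) and carries $\fI_\sigma^{QP}$, the element $\beta$, and the scalar $b$ to their counterparts for $\sigma_0^\psi$.

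\emph{Next} I would run the comparison through the defining property of $J_{Q|P}$ in Corollary~\ref{cor:characterization}: $J_{Q|P}(\sigma_0)$ is characterized as the unique $\fS^{-1}\fR[G]$-map with $J_{Q|P}(\sigma_0)(f)(1) = \int_{U_Q \cap U_{\overline P}} f(u)\,du$ on $\widetilde{F}_{QP}^{<1}$. Twisting by $\psi \in \Psi_G(B)$ produces a new $\fS^{-1}\fR[G]$-map between the $\psi$-twisted induced representations; since the integrand formula is insensitive to multiplying the $G$-action by the scalar-valued character $\psi$ (the integration is over a subgroup of $G$, and $\psi$ is a character of $G$, so $f \mapsto f^\psi$ intertwines the two sides compatibly with restriction to $U_Q \cap U_{\overline P}$), the twisted map still satisfies the characterizing integral identity for $\sigma_0^\psi$. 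By uniqueness it \emph{is} $J_{Q|P}(\sigma_0^\psi)$, which is exactly the statement that $J_{Q|P}$ is $\Psi_G$-invariant as a natural transformation of functors on $\Rep_B(M)$, naturally in $B$.

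\emph{The main obstacle} I anticipate is purely bookkeeping: making precise the identification of the universal unramified twist of $\sigma_0^\psi$ with a twist of that of $\sigma_0$, and checking that this identification is compatible with the localization at $\fS$ and with the $\fR$-module (not merely $R$-module) structure — i.e. that it respects the $\cZ_{A,M}$-action used to define $\fR$. This is where the hypothesis $\psi \in \Psi_G$ rather than $\psi \in \Psi_M$ is essential and must be used carefully: a general $\psi \in \Psi_M(B)$ would move $\fS$ and change $b$, whereas for $\psi \in \Psi_G$ the relevant data lives over $R^G$ and $\cZ_{A,M}$ and is therefore preserved. Once that identification is in hand, the rest is a formal consequence of the uniqueness in Corollary~\ref{cor:characterization} together with the functoriality already recorded in Proposition~\ref{prop:compatibility}.
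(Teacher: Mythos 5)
Your proposal is correct and follows essentially the same route as the paper: both identify $i_P^G(\sigma^{\psi|_M})$ with $i_P^G(\sigma)^\psi$ via $f\mapsto f^\psi$ and invoke the uniqueness characterization of Corollary~\ref{cor:characterization}, the integral being unchanged because the unramified character $\psi$ is trivial on $G^0\supset U_Q\cap U_{\overline{P}}$ (triviality on the domain of integration is the precise reason, not merely that $\psi$ is a character of $G$). The extra bookkeeping you anticipate (tracking $\fS$, $\fI_\sigma^{QP}$, $\beta$ and $b$ under the relabelling of $R$) is unnecessary, since the characterization of $J_{Q|P}$ is independent of those choices, and the paper's proof omits it.
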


\begin{proof} Let $B$ be an $A$-algebra and $\psi \in \Psi_G(B)$. We want to show $J_{Q|P}(\sigma_0^{\psi|_M})$ and $J_{Q|P}(\sigma_0)$ define the same morphism. We have to make it precise what ``same'' means in this situation.

Note that $i_P^G(\sigma^{\psi|_M}) \cong i_P^G(\sigma)^\psi$ given by $f \mapsto f^\psi$ in $\textup{Rep}_B(G)$ where $f^\psi(g) = \psi(g)^{-1} f(g)$. As a function, $f^\psi$ belongs to the $B$-module $i_P^G(\sigma)$. The element of $(\sigma,P,Q)$-signature identity is not affected by the twist by an unramified character of $M$. Another way to phrase is that $J_{Q|P}(\sigma_0^{\psi|_M})(f)$ is identified with $J_{Q|P}(\sigma_0)(f^\psi)$ by the characterization in Corollary~\ref{cor:characterization}. This allows us to identify $J_{Q|P}(\sigma_0^{\psi|_M})$ and $J_{Q|P}(\sigma_0)^\psi$.

Because $i_P^G(\sigma)^\psi = i_P^G(\sigma)$ as $B$-modules, this allows us to compare $J_{Q|P}(\sigma_0)^\psi$ and $J_{Q|P}(\sigma_0)$. They are in fact equal because the functor defined by $\psi$ is the identity on morphisms \textit{i.e.} the functor $V \mapsto V^\psi$ associates to $\phi \in \textup{Hom}_{B[G]}(V,W)$ the morphism $\phi^\psi=\phi$. \end{proof}

As a consequence of this invariance of $J_{Q|P}$, it becomes straightforward that:

\begin{corollary}\label{cor:G^0}
Suppose $\sigma_0$ satisfies Schur's lemma. The element $j^G(\sigma_0)\in S^{-1}R$ lies in the subring $S^{-1}R^G$ where $R^G = A[(M\cap G^0)/M^0]$ (recall that $S\subset R^G$ by definition).
\end{corollary}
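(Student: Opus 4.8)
The plan is to deduce Corollary~\ref{cor:G^0} from the $\Psi_G$-invariance of the functor $J_{Q|P}$, which has just been established, combined with the already-known facts that $j^G(\sigma_0)=j_{P}^G(\sigma_0)\in S^{-1}R$ is a scalar (Corollary~\ref{cor:genericschur} under the Schur hypothesis) and that it is independent of $P$. Concretely, $j^G(\sigma_0)$ is built from $J_{\overline P\mid P}(\sigma_0)$ and $J_{P\mid \overline P}(\sigma_0)$, so $\Psi_G$-invariance of each intertwining operator passes to an invariance statement for $j^G(\sigma_0)$; the content of the corollary is then a purely algebraic assertion: an element of $S^{-1}R=S^{-1}(A[M/M^0])$ that is invariant under the translation action of the subgroup $\Psi_G$ — equivalently, under multiplication by $\overline m$ for $m$ whose image lies in $G/G^0$ — must lie in $S^{-1}R^G=S^{-1}(A[(M\cap G^0)/M^0])$.

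First I would spell out the action. The short exact sequence $1\to (M\cap G^0)/M^0\to M/M^0\to G/G^0\to 1$ splits $R=A[M/M^0]$ as a free module over $R^G=A[(M\cap G^0)/M^0]$ with basis indexed by a set of coset representatives of $G/G^0$ inside $M/M^0$; write $R\cong\bigoplus_{t\in G/G^0}R^G\,\overline{t}$. An unramified character $\psi\in\Psi_G(B)$ of $G$ acts on $i_P^G(\sigma_0\otimes_AB)$ in a way that, after unwinding Corollary~\ref{cor:characterization}, twists the defining integral by $\psi(g)^{-1}$ on the nose; since this twist commutes with the integration over $U_Q\cap U_{\overline P}$ and with the scalar $b\in S$, the proposition just proved shows $J_{Q\mid P}(\sigma_0)^{\psi}=J_{Q\mid P}(\sigma_0)$. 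Composing, $j^G(\sigma_0)^{\psi}=j^G(\sigma_0)$ as endomorphisms of $i_P^G(\fS^{-1}\sigma)$; passing through the Schur isomorphism $\End_{S^{-1}R[G]}(i_P^G(\sigma[1/b]))\cong S^{-1}R$ of Corollary~\ref{cor:genericschur}, the scalar $j^G(\sigma_0)\in S^{-1}R$ is unchanged when we replace $\sigma_0$ by $\sigma_0^{\psi}$. On the other hand, on the level of the ring $S^{-1}R$, replacing $\sigma$ by $\sigma^{\psi}$ has the effect of translating by $\psi$, i.e.\ of multiplying the $\overline{t}$-component by $\psi(t)$; so the invariance forces all components of $j^G(\sigma_0)$ outside the $t=e$ block to vanish, which is exactly the statement $j^G(\sigma_0)\in S^{-1}R^G$.

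The one genuine subtlety — and what I expect to be the main obstacle — is making the last implication rigorous over an arbitrary Noetherian base $A$ rather than a field: the "characters" $\psi$ that separate the components $\overline{t}$ need not exist over $A$ itself. The clean fix is to argue with the \emph{universal} unramified character of $G$. Namely apply the invariance not to a single $\psi$ but to $\psi_{\univ}\in\Psi_G(R_G)$ where $R_G=A[G/G^0]$; extending scalars $A\rightsquigarrow R_G$ and using the compatibility of $j$ with scalar extension (Proposition~\ref{prop:compatibility_of_j}(ii), in its Schur form: $j(\sigma_0\otimes_A A')$ is the image of $j(\sigma_0)$), we get that the image of $j^G(\sigma_0)$ in $S^{-1}(R\otimes_A R_G)$ is fixed by the canonical translation by $\psi_{\univ}$. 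Now $R\otimes_A R_G\cong R\otimes_{R^G}(R^G\otimes_A R_G)$ and translation by $\psi_{\univ}$ multiplies the $\overline t$-block by the image of $\overline t$ under $R_G\to R\otimes_A R_G$; these images are distinct units (indeed a basis of $R_G$ over $A$), so an element of the free $R^G\otimes_A R_G$-module $\bigoplus_t (R^G\otimes_A R_G)\overline t$ fixed by this multiplication must be supported on $t=e$. Hence $j^G(\sigma_0)\otimes 1$ lies in $S^{-1}(R^G\otimes_A R_G)$, and since $R^G\to R^G\otimes_A R_G$ is (faithfully) flat and injective, $j^G(\sigma_0)$ itself lies in $S^{-1}R^G$. (Here $S\subset R^G$ by definition, so all localizations make sense inside $S^{-1}R^G$.) This completes the proof; the routine points to double-check are the precise form of the $\Psi_G$-action on $i_P^G$ under Frobenius reciprocity and the bookkeeping identifying "twist by $\psi$" with "translation on $S^{-1}R$", both of which follow directly from Corollary~\ref{cor:characterization} and the definitions.
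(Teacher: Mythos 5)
Your proof is correct and follows the paper's own route: the paper deduces Corollary~\ref{cor:G^0} directly from the $\Psi_G$-invariance of $J_{Q|P}$ just established (it is stated there as an immediate consequence), and your argument simply makes that ``straightforward'' step explicit, using the universal unramified character of $G$ (base change along $A\to A[G/G^0]$ together with Proposition~\ref{prop:compatibility_of_j}) to compensate for the possible lack of enough $A$-valued unramified characters. The only point to tighten is the separation step: the vanishing of the $\overline{t}$-components for $t\neq e$ holds because $G/G^0$ is a free abelian group, so $1\otimes\overline{t}-1$ is a non-zero-divisor in the group ring (and remains one after localizing at $S$), not merely because the elements $\overline{t}$ are distinct units forming a basis.
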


\subsection{Factorization}
Suppose $P$ is a semistandard parabolic. Fix an ordering $\Sigma_{red}=\{\alpha_1,\dots, \alpha_r\}$ of the reduced roots of $A_M$ in $P$ such that there are sequences $(P_0,\dots, P_r)$ and $(Q_0,\dots, Q_r)$ of semistandard parabolic subgroups satisfying:
\begin{itemize}
\item $P_0=P$ and $P_r = \overline{P}$ and each $P_i$ has Levi component $M$,
\item $\Sigma_{red}(P_i)\cap \Sigma_{red}(\overline{P_{i-1}}) = \alpha_i$,
\item $Q_i$ has Levi $M_{\alpha_i}$, the Levi subgroup generated by $M$ and the root subgroup attached to $\alpha_i$.
\end{itemize}
In this situation, we have $\overline{P_i\cap M_{\alpha_i}} = P_{i-1}\cap M_{\alpha_i}$, along with the properties $d(P_i,P_{i+1}) = 1$ and $d(P,\overline{P}) = \sum_id(P_i, P_{i+1})$.

For each $i=1,\dots, r$ we have the $j$-function relative to $M_{\alpha_i}$
$$j^{M_{\alpha_i}}(\sigma_0)\in \End_{\fS^{-1}\fR[M_{\alpha_i}]}(i_{P_i\cap M_{\alpha_i}}^{M_{\alpha_i}}(\fS^{-1}\sigma))\cong \End_{\fS^{-1}\fR[M]}(\fS^{-1}\sigma).$$
By combining Lemmas~\ref{lem:factorization_of_intertwiners} and \ref{lem:multiplicativity_of_intertwiners}(i), we obtain:

\begin{prop}\label{prop:factorization_of_j}
Let $\sigma_0$ be finitely generated. In the ring  $\End_{\fS^{-1}\fR[M]}(\fS^{-1}\sigma)$, $j^G(\sigma_0)$ factors as $$j^G(\sigma_0) = \prod_{i=1}^r j^{M_{\alpha_i}}(\sigma_0).$$
\end{prop}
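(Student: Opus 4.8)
The plan is to telescope $j^G(\sigma_0)$ through the sequences $(P_0,\dots,P_r)$ and $(Q_0,\dots,Q_r)$, reducing the global $j$-function to a product of $j$-functions over the maximal Levi subgroups $M_{\alpha_i}$. First I would use the independence of $j^G(\sigma_0)$ from the choice of semistandard parabolic (proven just after Proposition~\ref{prop:compatibility_of_j}) to write $j^G(\sigma_0) = j_{P}^G(\sigma_0) = J_{\overline P|P}(\sigma_0)\circ J_{P|\overline P}(\sigma_0)$, and then I would replace each of the two intertwining operators $J_{\overline P|P}(\sigma_0)=J_{P_r|P_0}(\sigma_0)$ and $J_{P|\overline P}(\sigma_0)=J_{P_0|P_r}(\sigma_0)$ by its factorization through the chain. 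Concretely, since $d(P_i,P_{i+1})=1$ and $d(P_0,P_r)=\sum_i d(P_i,P_{i+1})$, repeated application of Lemma~\ref{lem:multiplicativity_of_intertwiners} gives
\[
J_{P_r|P_0}(\sigma_0) = J_{P_r|P_{r-1}}(\sigma_0)\circ\cdots\circ J_{P_1|P_0}(\sigma_0),
\]
and similarly $J_{P_0|P_r}(\sigma_0) = J_{P_0|P_1}(\sigma_0)\circ\cdots\circ J_{P_{r-1}|P_r}(\sigma_0)$ (one must check the additivity hypothesis $d(P_0,P_r)=\sum_i d(P_{i+1},P_i)$ holds in the reversed direction, which follows from $d(P,Q)+d(Q,P) = |\Sigma_{red}(A_M)|$ and the given relations on the $\Sigma_{red}(P_i)$).

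Next I would pass each one-step operator $J_{P_{i+1}|P_i}(\sigma_0)$ and $J_{P_i|P_{i+1}}(\sigma_0)$ to the corresponding maximal Levi $M_{\alpha_i}$ using Lemma~\ref{lem:factorization_of_intertwiners}(ii): since $P_i\cap M_{\alpha_i} = \overline{P_{i+1}\cap M_{\alpha_i}}$ (equivalently $\overline{P_i\cap M_{\alpha_i}} = P_{i-1}\cap M_{\alpha_i}$, shifting indices), and $P_iM_{\alpha_i}$, $P_{i+1}M_{\alpha_i}$ are parabolics with Levi $M_{\alpha_i}$ — wait, I need to be careful: here $N=M_{\alpha_i}$, and the relevant statement is that $J_{P_{i+1}|P_i}(\sigma_0)$ equals the $M_{\alpha_i}N$-style intertwiner; actually the clean route is Lemma~\ref{lem:factorization_of_intertwiners}(i) applied with $O=Q_i$ (a parabolic of $G$ with Levi $M_{\alpha_i}$ containing both $P_i$ and $P_{i+1}$, which is possible because $d(P_i,P_{i+1})=1$ means they differ only along $\alpha_i$), giving
\[
J_{P_{i+1}|P_i}^G(\sigma_0) = i_{Q_i}^G\!\left(J_{P_{i+1}\cap M_{\alpha_i}|P_i\cap M_{\alpha_i}}^{M_{\alpha_i}}(\sigma_0)\right),
\]
and likewise for the reversed operator. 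Composing, the full chain for $J_{P_r|P_0}$ and $J_{P_0|P_r}$ becomes a product of $i_{Q_i}^G$ of intertwiners internal to $M_{\alpha_i}$, and inside the $i$-th maximal Levi the two relevant intertwiners compose precisely to $j^{M_{\alpha_i}}(\sigma_0) = J_{\overline{P_i\cap M_{\alpha_i}}|P_i\cap M_{\alpha_i}}^{M_{\alpha_i}}\circ J_{P_i\cap M_{\alpha_i}|\overline{P_i\cap M_{\alpha_i}}}^{M_{\alpha_i}}$.

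The remaining work is bookkeeping to recombine the two telescoped products into a single ordered product of the $j^{M_{\alpha_i}}(\sigma_0)$, and here I would invoke centrality: by Proposition~\ref{prop:centrality} (and its reformulation via $\sig_{PP}$), each $j^{M_{\alpha_i}}(\sigma_0)$, viewed in $\End_{\fS^{-1}\fR[M]}(\fS^{-1}\sigma)$ through Corollary~\ref{cor:schurslemmafrakR}, lies in the center, so the order of the factors is irrelevant and the interleaved product from the two chains collapses to $\prod_{i=1}^r j^{M_{\alpha_i}}(\sigma_0)$. Throughout, all identifications $\End_{\fS^{-1}\fR[M_{\alpha_i}]}(i_{P_i\cap M_{\alpha_i}}^{M_{\alpha_i}}(\fS^{-1}\sigma))\cong \End_{\fS^{-1}\fR[M]}(\fS^{-1}\sigma)$ are made via Corollary~\ref{cor:schurslemmafrakR} applied in stages (first $M\subset M_{\alpha_i}$, then $M_{\alpha_i}\subset G$), and one checks these are compatible with $i_{Q_i}^G$. \textbf{The main obstacle} I anticipate is not any single deep input but the careful matching of parabolics and the verification that Lemmas~\ref{lem:multiplicativity_of_intertwiners} and~\ref{lem:factorization_of_intertwiners} apply with exactly the $d(\cdot,\cdot)$-additivity and intersection hypotheses demanded — in particular confirming that the reversed chain $(P_r,\dots,P_0)$ still satisfies the additivity needed for Lemma~\ref{lem:multiplicativity_of_intertwiners}, and that the $Q_i$ can be chosen uniformly so that the two telescopings factor through the *same* maximal Levi at step $i$. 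Once the geometry is pinned down, centrality does the rest and the identity follows as in \cite[App.~C]{dhkm_conjecture}.
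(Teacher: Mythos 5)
Your plan is essentially the paper's own proof: the factorization is obtained exactly as you outline, by telescoping $J_{\overline P|P}$ and $J_{P|\overline P}$ through the chain $(P_0,\dots,P_r)$ via Lemma~\ref{lem:multiplicativity_of_intertwiners}, pushing each one-step operator into the maximal Levi $M_{\alpha_i}$ via Lemma~\ref{lem:factorization_of_intertwiners}, and using centrality (Proposition~\ref{prop:centrality}, via Corollary~\ref{cor:schurslemmafrakR}) to pull the resulting factors $j^{M_{\alpha_i}}(\sigma_0)$ out of the interleaved composite, following \cite[Prop 7.8]{dat} and \cite[IV.1(12),(14)]{waldspurger}. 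The only slip is your parenthetical identity $d(P,Q)+d(Q,P)=|\Sigma_{red}(A_M)|$, which is false in general (take $Q=P$); the reversed-chain additivity you need follows at once from the symmetry $d(P,Q)=d(Q,P)$, given by the bijection $\alpha\mapsto -\alpha$ between $\Sigma_{red}(P)\cap\Sigma_{red}(\overline{Q})$ and $\Sigma_{red}(Q)\cap\Sigma_{red}(\overline{P})$.
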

All the factors are central, so the product on the right side is independent of the ordering of the factors. When $\sigma_0$ satisfies Schur's lemma, the terms in the product are elements of the commutative ring $S^{-1}R$, so this independence of ordering is trivial.

\begin{rem}
The $j$-function of a parabolically induced representation enjoys a multiplicativity property, whose statement and proof is analogous to that appearing in \cite[Appendix C.3]{dhkm_conjecture}. We omit it.
\end{rem}

\section{More on $b$}
Let $\sigma_0$ be finitely generated satisfying Schur's lemma. By Corollary~\ref{cor:G^0}, each term $j^{M_{\alpha_i}}(\sigma_0)$ in Proposition~\ref{prop:factorization_of_j} lives in the subring $S_i^{-1}R_i$ where $R_i = A[(M\cap M_{\alpha_i}^0)/M^0]$ and $S_i$ is the multiplicative subset of $R_i$ defined in Section~\ref{sec:schur}, where $G$ is taken to be $M_{\alpha_i}$. The group $(M\cap M_{\alpha_i}^0)/M^0$ is a free abelian group of rank 1 and (as observed in \cite[Sec 7]{dat}) it has a unique generator $m_{\alpha_i}$ with the property that there is a positive power $n$ with $m_{\alpha_i}^n\in A_M$ and $|\alpha_i(m_{\alpha_i}^n)|\leq 1$. This gives us a canonical isomorphism
\begin{align*}
R_i &\to A[X_i^{\pm 1}]\\
m_{\alpha_i}&\mapsto X_i
\end{align*}
that sends $S_i$ to the subset of polynomials whose first and last coefficients are units. Now if we identify $A[M/M^0]\cong A[X_1^{\pm 1},\dots, X_r^{\pm 1}]$ it follows from Proposition~\ref{prop:factorization_of_j} and Corollary~\ref{cor:G^0} that $j^G(\sigma_0)$ can be expressed as a product of rational functions, each in a single variable:
$$j^G(\sigma_0) = \prod_i\frac{P_i(X_i)}{Q_i(X_i)}\in S^{-1}(A[X_1^{\pm 1},\dots, X_r^{\pm 1}])$$ with $P_i(X_i), Q_i(X_i)$ in $A[X_i^{\pm 1}]$, and $Q(X_i)$ having first and last coefficients units in $A$.

In the multiplicative sets $S_i\subset A[X_i^{\pm 1}]$ of polynomials whose first and last coefficients are units, we can speak of the degree of a polynomial $$\deg(a_nX^n+a_{n+1}X^{n+1}+\cdots+ a_mX^m):= m-n,$$ and multiplying polynomials in $S_i$ causes degrees to add even if $A$ is not an integral domain. Thus if we want to invert as little as possible in defining $J_{Q|P}(\sigma_0)$ we can, for each $i$, take $G$ in Theorem~\ref{thm:image} to be $M_{\alpha_i}$ and choose each $\beta_i$ such that $\deg_{X_i}(b_i)$ is as small as possible. This provides a denominator $b = b_1\cdots b_r$ that is minimal with respect to the divisibility partial ordering, and the ring $R[1/b]$ is minimal with respect to inclusion. 

If $A$ is a field, $A[X_i^{\pm 1}]$ is a principal ideal domain. Since image of $\sig_{PQ}$ defines an ideal, which is generated by a single element, there is a canonical choice of $b_i$ having least degree. When $A$ is not a field, it is not clear there is a canonical denominator.

\section{Universal $j$-function}\label{sec:universal}

Fix a depth $r\geq 0$ and let $\cP_0$ be the projective generator of the depth $\leq r$ subcategory of smooth $\ZZ'[M]$-modules (see \cite[Appendice A]{dat_finitude} for a description of $\cP_0$). In this section, we will apply the construction of Sections~\ref{sec:unramifiedtwisting} and  \ref{sec:schur} with $A = \ZZ'$ and $\sigma_0 = \cP_0$. In the notation of Sections~\ref{sec:unramifiedtwisting} and  \ref{sec:schur} we have
\begin{align*}
R &= \ZZ'[M/M^0]\\
\sigma &= \cP:=\cP_0\otimes_{\ZZ'}\Psi_M\\
\fR &= \cZ_{\ZZ',M}[M/M^0]
\end{align*}
where $\cP_0$ is finitely generated as a $\ZZ'[M]$-module. As $\cP_0$ is a projective generator, the map $\cZ_{\ZZ',M}\to Z(\End_{\ZZ'[M]}(\cP_0))$ is an isomorphism on the depth $\leq r$ direct factor $\cZ_{\ZZ',M,r}$ of $\cZ_{\ZZ',M}$ and zero on the other factors. In particular, so is $\cZ_{\ZZ',M,r}\to Z(\End_{\cZ_{\ZZ',M}[M]}(\cP_0))$ because $\End_{\ZZ'[M]}(\cP_0)= \End_{\cZ_{\ZZ',M}[M]}(\cP_0)$. Finally, passing to universal unramified twists we have an isomorphism
$$\fR\overset{\sim}{\to} Z(\End_{\fR[M]}(\cP)),$$ which equals $Z(\End_{R[M]}(\cP))$ by Lemma~\ref{lem:equivariance}(i).

The constructions of Sections~\ref{sec:unramifiedtwisting},  \ref{sec:schur}, and \ref{sec:jfunction} give
\begin{align*}
J_{Q|P}(\cP_0) &\in \Hom_{\fS^{-1}\fR[G]}(i_P^G(\fS^{-1}\cP), i_Q^G(\fS^{-1}\cP))\\
j^G(\cP_0) &\in Z(\End_{\fS^{-1}\fR[M]}(\fS^{-1}\cP)) \cong \fS^{-1}\fR
\end{align*}
where the centrality of $j^G(\cP_0)$ is from Proposition~\ref{prop:centrality}. We will describe a sense in which $J_{Q|P}(\cP_0)$ and $j^G(\cP_0)$ are ``universal.''

Let $B$ be a commutative Noetherian $\ZZ'$-algebra. By \cite[Lem B.13]{mt_theta}, $\cP_{B,0} = \cP_0\otimes_{\ZZ'}B$ is a projective generator of the depth $\leq r$ subcategory of smooth $B[M]$-modules and we have
$$\J_{Q|P}(\cP_{B,0}) = J_{Q|P}(\cP_0)\otimes_{\ZZ'}1_B$$ by Proposition~\ref{prop:compatibility}(ii).

Now let $\pi_0$ be a finitely generated $B[M]$-module of depth $\leq r$, with universal unramified twist $\pi := \pi_0\otimes_{\ZZ'} \Psi_M$. There is an integer $n$ and a surjection of $B[M]$-modules
$$\cP_{B,0}^{\oplus n}\to \pi_0.$$ By applying Proposition~\ref{prop:compatibility}(i) to each inclusion $\cP_{B,0}\to \cP_{B,0}^{\oplus n}$ we find that
$$J_{Q|P}(\cP_{B,0}^{\oplus n}) = \diag(J_{Q|P}(\cP_{B,0})).$$ Now applying Proposition~\ref{prop:compatibility}(i) to the surjection $\cP_{B,0}^{\oplus n}\to \pi_0,$ we find that $J_{Q|P}(\pi_0)$ is determined by the following recipe: given $v\in i_P^G(\fS^{-1}\pi)$, choose any preimage $v'$ of $v$ in the surjection $i_P^G(\fS^{-1}\cP_B^{\oplus n})\to i_P^G(\fS^{-1}\pi)$, then send $v'$ to $i_Q^G(\fS^{-1}\cP_B^{\oplus n})$ via $\diag(\J_{Q|P}(\cP_{B,0})) = \diag(J_{Q|P}(\cP_0)\otimes_{\ZZ'}1_B)$, then project back down along the surjection $i_Q^G(\fS^{-1}\cP_B^{\oplus n})\to i_Q^G(\fS^{-1}\pi)$.
\begin{thm}\label{thm:universalJ}
Let $B$ be a commutative Noetherian $\ZZ'$-algebra and let $\pi_0$ be any finitely generated smooth $B[M]$-module. The intertwining operator $J_{Q|P}(\pi_0)$ is determined by $J_{Q|P}(\cP_0)$ in the sense that the following diagram commutes for any resolution $\cP_{B,0}^{\oplus n}\twoheadrightarrow \pi_0$.
$$
\begin{tikzcd}
i_P^G (\fS^{-1}\cP_B) \arrow[rrr,"J_{Q|P}(\cP_0)\otimes_{\ZZ'}1_B"] \arrow[d,"\diag"]&&& i_{Q}^{G}(\fS^{-1}\cP_B) \arrow[d,"\diag"]\\
i_P^G (\fS^{-1}\cP_B^{\oplus n}) \arrow[rrr,"\diag(J_{Q|P}(\cP_0)\otimes_{\ZZ'}1_B)"] \arrow[d,twoheadrightarrow]&&& i_{Q}^{G}(\fS^{-1}\cP_B^{\oplus n}) \arrow[d,twoheadrightarrow]\\
i_P^G (\fS^{-1}\pi) \arrow[rrr,"J_{Q|P}(\pi_0)"] &&& i_{Q}^{G}(\fS^{-1}\pi)
\end{tikzcd}
$$
\end{thm}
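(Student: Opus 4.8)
The plan is to reduce Theorem~\ref{thm:universalJ} to the naturality of $J_{Q|P}$ with respect to homomorphisms of finitely generated $B[M]$-modules, which is exactly Proposition~\ref{prop:compatibility}(i), together with its compatibility with scalar extension, Proposition~\ref{prop:compatibility}(ii). First I would record that $\cP_{B,0} = \cP_0 \otimes_{\ZZ'} B$ is a finitely generated projective generator of the depth $\leq r$ subcategory of $\Rep_B(M)$ (by \cite[Lem B.13]{mt_theta}), so that $\pi_0$, being finitely generated of depth $\leq r$, admits a surjection $\cP_{B,0}^{\oplus n}\twoheadrightarrow \pi_0$ for some $n$. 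The top square of the diagram commutes because each of the $n$ inclusions $\cP_{B,0}\hookrightarrow \cP_{B,0}^{\oplus n}$ (and each projection) is a homomorphism of finitely generated $B[M]$-modules, so Proposition~\ref{prop:compatibility}(i) applied coordinatewise yields $J_{Q|P}(\cP_{B,0}^{\oplus n}) = \diag(J_{Q|P}(\cP_{B,0}))$; and $J_{Q|P}(\cP_{B,0}) = J_{Q|P}(\cP_0)\otimes_{\ZZ'} 1_B$ by Proposition~\ref{prop:compatibility}(ii), since $B$ is a $\ZZ'$-algebra and $\cP_{B,0} = \cP_0\otimes_{\ZZ'}B$. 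The bottom square commutes by a single application of Proposition~\ref{prop:compatibility}(i) to the surjection $q_0 : \cP_{B,0}^{\oplus n}\twoheadrightarrow \pi_0$.

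Next I would address the only genuine subtlety, which is why the bottom row is well defined as an operator on $i_P^G(\fS^{-1}\pi)$, i.e. why the commuting of the bottom square actually pins down $J_{Q|P}(\pi_0)$ via the stated recipe (choose a preimage, apply the diagonal operator, project). Since $i_P^G$, the universal unramified twist, and localization at $\fS$ are all exact, the surjection $\cP_{B,0}^{\oplus n}\twoheadrightarrow \pi_0$ induces a surjection $i_P^G(\fS^{-1}\cP_B^{\oplus n})\twoheadrightarrow i_P^G(\fS^{-1}\pi)$, so preimages exist. That the resulting element of $i_Q^G(\fS^{-1}\pi)$ is independent of the choice of preimage follows because any two preimages differ by an element of $i_P^G(\fS^{-1}K)$, where $K = \ker(q_0)$, and by Proposition~\ref{prop:compatibility}(i) applied to the inclusion $K\hookrightarrow \cP_{B,0}^{\oplus n}$ the diagonal operator carries $i_P^G(\fS^{-1}K)$ into $i_Q^G(\fS^{-1}K)$, which projects to zero in $i_Q^G(\fS^{-1}\pi)$. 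Thus the bottom arrow is the unique map making the square commute, and it coincides with $J_{Q|P}(\pi_0)$ by Proposition~\ref{prop:compatibility}(i).

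I do not expect a serious obstacle here: the theorem is essentially a formal packaging of Proposition~\ref{prop:compatibility}, and the main content — the naturality of $J_{Q|P}$ with respect to arbitrary homomorphisms of finitely generated modules — has already been established. The one point requiring care, beyond bookkeeping, is ensuring that $\pi_0$ really does have depth $\leq r$ so that $\cP_{B,0}$ resolves it; this is harmless because one simply takes $r$ large enough (any finitely generated module has bounded depth, cf. the proof of Proposition~\ref{extension_of_scalars_flat_lam_prop}), and $J_{Q|P}(\cP_0)$ for varying $r$ is compatible under the obvious inclusions of subcategories. I would state the proof in roughly the three steps above: (1) the two squares commute by Proposition~\ref{prop:compatibility}(i)–(ii); (2) exactness gives surjectivity of the vertical maps and well-definedness of the recipe; (3) uniqueness forces the bottom arrow to equal $J_{Q|P}(\pi_0)$.

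\begin{proof}
Fix $r \geq 0$ with $\pi_0$ of depth $\leq r$ (such $r$ exists because $\pi_0$ is finitely generated), and let $\cP_0 = \cP(M)_{0,r}$ be the corresponding projective generator. By \cite[Lem B.13]{mt_theta}, $\cP_{B,0} = \cP_0\otimes_{\ZZ'}B$ is a projective generator of the depth $\leq r$ subcategory of $\Rep_B(M)$, so there is an integer $n$ and a surjection $q_0:\cP_{B,0}^{\oplus n}\twoheadrightarrow \pi_0$ of finitely generated $B[M]$-modules.

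Applying Proposition~\ref{prop:compatibility}(i) to each of the $n$ coordinate inclusions $\cP_{B,0}\hookrightarrow \cP_{B,0}^{\oplus n}$ and each coordinate projection, we get $J_{Q|P}(\cP_{B,0}^{\oplus n}) = \diag(J_{Q|P}(\cP_{B,0}))$. By Proposition~\ref{prop:compatibility}(ii), applied to $\ZZ'\to B$, we have $J_{Q|P}(\cP_{B,0}) = J_{Q|P}(\cP_0)\otimes_{\ZZ'}1_B$. Hence the top square of the diagram commutes. Applying Proposition~\ref{prop:compatibility}(i) to $q_0$ shows that the bottom square commutes.

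It remains to see that these commutations determine $J_{Q|P}(\pi_0)$ by the stated recipe. Since the universal unramified twist, $i_P^G$, and localization at $\fS$ are exact, the surjection $q_0$ induces a surjection $i_P^G(\fS^{-1}\cP_B^{\oplus n})\twoheadrightarrow i_P^G(\fS^{-1}\pi)$, so every $v\in i_P^G(\fS^{-1}\pi)$ has a preimage $v'$. Let $K = \ker(q_0)$, a finitely generated $B[M]$-module (as $B$ is Noetherian). Two preimages of $v$ differ by an element of $i_P^G(\fS^{-1}K)$; by Proposition~\ref{prop:compatibility}(i) applied to the inclusion $K\hookrightarrow \cP_{B,0}^{\oplus n}$, the operator $\diag(J_{Q|P}(\cP_0)\otimes_{\ZZ'}1_B)$ maps $i_P^G(\fS^{-1}K)$ into $i_Q^G(\fS^{-1}K)$, which maps to $0$ in $i_Q^G(\fS^{-1}\pi)$. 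Thus the composite along the left-bottom path of the diagram is a well-defined map $i_P^G(\fS^{-1}\pi)\to i_Q^G(\fS^{-1}\pi)$, and by commutativity of the bottom square it equals $J_{Q|P}(\pi_0)$.
\end{proof}
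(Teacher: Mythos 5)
Your proposal is correct and follows essentially the same route as the paper, which proves the theorem in the discussion preceding its statement: Proposition~\ref{prop:compatibility}(ii) gives $J_{Q|P}(\cP_{B,0}) = J_{Q|P}(\cP_0)\otimes_{\ZZ'}1_B$, coordinatewise application of Proposition~\ref{prop:compatibility}(i) gives $J_{Q|P}(\cP_{B,0}^{\oplus n}) = \diag(J_{Q|P}(\cP_{B,0}))$, and one more application of (i) to the surjection gives the bottom square. Your added well-definedness argument via $K=\ker(q_0)$ is fine (its finite generation really rests on the Noetherianity of the bounded-depth Hecke algebra from Theorem~\ref{finiteness_dhkm_thm}, not just on $B$ being Noetherian), though it is not strictly needed since surjectivity of the left vertical map already forces the bottom arrow to be $J_{Q|P}(\pi_0)$.
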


Although $J_{Q|P}(\pi_0)$ has, up to this point, been defined only for finitely generated $\pi_0$, Theorem~\ref{thm:universalJ} allows us to extend the definition to non-finitely generated objects:
\begin{definition}
Let $\pi_0$ be an arbitrary smooth $B[M]$-module of depth $\leq r$ and choose any surjection $\oplus_I\cP_0\to \pi_0$. Define $J_{Q|P}(\pi_0)$ to be the morphism induced on $i_P^G(\fS^{-1}\pi)\to i_Q^G(\fS^{-1}\pi)$ by applying $J_{Q|P}(\cP_0)\otimes_{\ZZ'} 1_B$ diagonally to $i_P^G(\fS^{-1}\oplus_I\sigma_B)$  and passing to the quotient. We can define $j^G(\pi_0)$ similarly.
\end{definition}

Define $Z_0 := Z(\End_{B[M]}(\pi_0))$. The natural action of $\cZ_{\ZZ',M}$ on $\pi_0$ induces a homomorphism $\lambda_{\pi_0}:\cZ_{\ZZ',M}\to Z_0$. We adopt the notation of Proposition~\ref{prop:bernsteinscalarextension}: by extending scalars to $\ZZ'[M/M^0]$ and localizing we have a morphism
$$\widetilde{\lambda}_{\pi_0}:\fS^{-1}\fR\to {\cS}^{-1}Z,$$ where $Z:=Z_0[M/M^0] = Z(\End_{B[M/M^0][M]}(\pi))$ and $\cS$ is the multiplicative subset of $Z$ defined analogously to $\fS$. The following theorem states that $j^G(\cP_0)$ is the universal depth $\leq r$ $j$-function.

\begin{thm}\label{thm:universalj}
Suppose $\pi_0$ is a $B[M]$-module of depth $\leq r$ with $\cZ_{\ZZ',M}$ acting via $\lambda_{\pi_0}:\cZ_{\ZZ',M}\to Z_0$. Then  $$j^G(\pi_0) = \widetilde{\lambda}_{\pi_0}(j^G(\cP_0)),$$ where the equality takes place in the ring $\cS^{-1}Z$.
\end{thm}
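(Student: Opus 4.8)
The plan is to deduce the equality from the universality of $j^G(\cP_0)$ recorded in Theorem~\ref{thm:universalJ}, together with the observation that any presentation of $\pi_0$ by copies of the projective generator is automatically equivariant for the Bernstein center.

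First I would reduce to a statement living entirely over $B$. Since $\cP_{B,0}=\cP_0\otimes_{\ZZ'}B$ is again a projective generator of the depth $\leq r$ subcategory of $\Rep_B(M)$, the identifications opening Section~\ref{sec:universal} apply over $B$: writing $\fR_B=\cZ_{B,M}[M/M^0]$, $\cP_B=\cP_{B,0}\otimes_B\Psi_M$, and $\fS_B$ for the analogue of $\fS$, we get $\fS_B^{-1}\fR_B\overset{\sim}{\to}Z(\End_{\fS_B^{-1}\fR_B[M]}(\fS_B^{-1}\cP_B))$, and by Propositions~\ref{prop:compatibility}(ii) and~\ref{prop:compatibility_of_j}(ii) the element $j^G(\cP_{B,0})\in\fS_B^{-1}\fR_B$ is the image of $j^G(\cP_0)$ under the natural map $\fS^{-1}\fR\to\fS_B^{-1}\fR_B$. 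The action of $\cZ_{\ZZ',M}$ on the $B[M]$-module $\pi_0$ factors as $\cZ_{\ZZ',M}\to\cZ_{B,M}\xrightarrow{\mu_{\pi_0}}Z_0$, so $\widetilde\lambda_{\pi_0}$ factors as $\fS^{-1}\fR\to\fS_B^{-1}\fR_B\xrightarrow{\widetilde\mu_{\pi_0}}\cS^{-1}Z$ and $\widetilde\lambda_{\pi_0}(j^G(\cP_0))=\widetilde\mu_{\pi_0}(j^G(\cP_{B,0}))$. It therefore suffices to prove $j^G(\pi_0)=\widetilde\mu_{\pi_0}(j^G(\cP_{B,0}))$.

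Next I would fix a surjection $q\colon\bigoplus_I\cP_{B,0}\twoheadrightarrow\pi_0$ of $B[M]$-modules, with $I$ finite when $\pi_0$ is finitely generated, and coordinate components $q_i$. Writing $j^G=J_{P|\overline{P}}\circ J_{\overline{P}|P}$ and using that a composite of morphisms obtained by passing to quotients is again so obtained, Theorem~\ref{thm:universalJ} (when $\pi_0$ is finitely generated) and the definition of $j^G(\pi_0)$ given just before the present statement (in general) express $j^G(\pi_0)$ as the endomorphism of $i_P^G(\fS_B^{-1}\pi)$ induced, through $i_P^G(\fS_B^{-1}(q\otimes\Psi_M))$, by $\diag(j^G(\cP_{B,0}))$ acting on $i_P^G(\fS_B^{-1}\bigoplus_I\cP_B)$; here $j^G(\bigoplus_I\cP_{B,0})=\diag(j^G(\cP_{B,0}))$ by Proposition~\ref{prop:compatibility_of_j}(i) applied to the coordinate projections. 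The heart of the argument is then the following. Under the isomorphism $\fS_B^{-1}\fR_B\overset{\sim}{\to}Z(\End_{\fS_B^{-1}\fR_B[M]}(\fS_B^{-1}\cP_B))$ of Section~\ref{sec:universal}, whose inverse is the tautological action map $\fR_B\to\End_{\fR_B[M]}(\cP_B)$, the element $j^G(\cP_{B,0})$ acts on $\cP_B=\cP_{B,0}\otimes_B\Psi_M$ purely through the $\fR_B$-module structure, and hence on $i_P^G(\fS_B^{-1}\cP_B)$ by pointwise application, the isomorphism of Corollary~\ref{cor:schurslemmafrakR} being $\phi\mapsto i_P^G(\phi)$. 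On the other hand $q$ is $\cZ_{B,M}$-equivariant, i.e. $q_i(zv)=\mu_{\pi_0}(z)q_i(v)$ for $z\in\cZ_{B,M}$, because $\cZ_{B,M}$ acts on $\cP_{B,0}$ through its natural action — precisely the action defining the isomorphism above — and on $\pi_0$ through $\mu_{\pi_0}$. Evaluating on a simple tensor $v\otimes\overline{m}$ (all sums are finite) gives $(q\otimes\Psi_M)\circ\diag(j^G(\cP_{B,0}))=\widetilde\mu_{\pi_0}(j^G(\cP_{B,0}))\cdot(q\otimes\Psi_M)$ on $\bigoplus_I\cP_B$; applying $i_P^G$ and localizing at $\fS_B$ shows the endomorphism of $i_P^G(\fS_B^{-1}\pi)$ induced by $\diag(j^G(\cP_{B,0}))$ is pointwise multiplication by $\widetilde\mu_{\pi_0}(j^G(\cP_{B,0}))\in\cS^{-1}Z$. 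In particular $j^G(\pi_0)\in\cS^{-1}Z$, and by the reduction above $j^G(\pi_0)=\widetilde\mu_{\pi_0}(j^G(\cP_{B,0}))=\widetilde\lambda_{\pi_0}(j^G(\cP_0))$.

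The main obstacle is organizational rather than technical: keeping the rings $\fR$, $\fR_B=\cZ_{B,M}[M/M^0]$ and $Z=Z_0[M/M^0]$ and their localizations cleanly separated, and — the one point needing genuine care — checking that $j^G(\cP_{B,0})$, once transported into $\fS_B^{-1}\fR_B$ through the chain of isomorphisms of Corollary~\ref{cor:schurslemmafrakR} and Section~\ref{sec:universal}, really does act on $i_P^G(\fS_B^{-1}\cP_B)$ via the tautological module structure on $\cP_B$, since this is exactly what makes the Bernstein-center equivariance of $q$ bite. Once that is pinned down, the rest is a formal diagram chase, uniform in whether or not $\pi_0$ is finitely generated.
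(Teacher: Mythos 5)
Your argument is correct and is essentially the paper's own proof: both transport the scalar $j^G$ of the projective generator to $\pi$ along maps from (copies of) $\cP_{B,0}$, using the compatibility of $j$ with morphisms from the generator and with base change, and conclude because these maps jointly surject onto $\cS^{-1}\pi$. The only difference is organizational: you package the maps into a single surjection $\oplus_I\cP_{B,0}\twoheadrightarrow\pi_0$ and invoke Theorem~\ref{thm:universalJ}, and you replace the paper's appeal to Proposition~\ref{prop:compatibility_of_j}(iii) (base change of $\cP_0$ along $\lambda_{\pi_0}$) by the observation that the surjection is $\cZ_{B,M}$-equivariant, which is the same underlying fact.
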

\begin{proof}
Let us abbreviate $\lambda := \lambda_{\pi_0}$, $\cP_{\lambda,0} := \cP_0\otimes_{\cZ_{\ZZ',M},\lambda}Z_0$, and $\cP_{\lambda} := \cP \otimes_{\fR,\widetilde{\lambda}}Z$. 

Since $\pi_0$ is finitely generated as a $B[M]$-module, and $B\to \End_{B[M]}(\pi_0)$ factors through $Z_0$, we have that $\pi_0$ is finitely generated as a $Z_0[M]$-module. We could replace $Z_0$ with $B$ in what follows without losing generality. 

Fix a morphism $\phi:\cP_0\to \pi_0$, which induces $\phi_{Z_0}:\cP_{\lambda,0}\to \pi_0$ and $\widetilde{\phi}_{Z_0}:\cS^{-1}\cP_{\lambda}\to \cS^{-1}\pi$. Proposition~\ref{prop:compatibility_of_j}(i) gives $$j^G(\pi_0)|_{\widetilde{\phi}_{Z_0}(\cS^{-1}\cP_{\lambda})} = j^G(\phi(\cP_0)).$$ Note that this is an equality of scalars in $\cS^{-1}Z$.

From the compatibility of $J_{Q|P}(\sigma_0)$ with scalar extension (take $B$ in Proposition~\ref{prop:compatibility_of_j}(iii) to be $Z_0$)  we have $$\widetilde{\lambda}(j^G(\cP_0)) = j^G(\cP_{\lambda,0})\in \cS^{-1}Z.$$ Applying Proposition~\ref{prop:compatibility_of_j}(i) again gives us $j^G(\cP_{\lambda,0})=j^G(\phi(\cP_0))$ in $\cS^{-1}Z$. We conclude that the scalars $j^G(\cP_{\lambda,0})$ and $j^G(\pi_0)$ act the same on the subspace $\widetilde{\phi}_{Z_0}(\cS^{-1}\cP_{\lambda})$ of $\cS^{-1}\pi$. 

Letting $\phi$ vary over $\Hom_{\ZZ'[M]}(\cP_0,\pi_0)$, the subspaces $\widetilde{\phi}_{Z_0}(\cS^{-1}\cP_{\lambda})$ generate $\cS^{-1}\pi$ so $j^G(\cP_{\lambda,0})$ and $j^G(\pi_0)$ act the same on all of $\cS^{-1}\pi$, hence are identical elements of the ring $\cS^{-1}Z$.
\end{proof}

Note that if $\pi_0$ satisfies Schur's lemma, then $Z_0$ is simply $B$ in the above.

\section{Characterizing local Langlands in families: quasisplit classical groups}\label{sec:characterization}

\subsection{Langlands parameters}
Let $W_F$ denote the Weil group of $F$ and let $I_F$ and $P_F$ be the inertia and wild inertia subgroups, respectively.

For a connected reductive group defined over $F$, with $F$-points denoted by $G$, we consider its dual group $\widehat{G}$ as a $\ZZ[1/p]$-group scheme. Fix $^LG = \widehat{G}\rtimes W$ a finite form of the $L$-group, where $W$ is a finite quotient of $W_F$ through which the action of $W_F$ on $\widehat{G}$ factors. Here, $^LG$ is again considered as a $\ZZ[1/p]$-group scheme (possibly non-connected). Given a $\ZZ[1/p]$-algebra $R$ we say a homomorphism $\phi:W_F\to {^LG}(R)$ is an $L$-morphism if the composition $W_F\to {^LG}(R)\to W$ is equivalent to the natural projection $W_F\to W$. Note that specifying an $L$-morphism $\phi$ is equivalent to specifying a 1-cocyle $\phi^{\circ}:W_F\to \widehat{G}(R)$ in $Z^1(W_F,\widehat{G}(R))$.

Let $(P_F^e)_{e\in \mathbb{N}}$ be an exhaustive filtration of $P_F$ by open subgroups that are normal in $W_F$ with $P_F^0 = P_F$. Let $\Fr$ be a lift of a Frobenius element in $W_F/P_F$, let $s$ be a topological generator of the tame quotient $I_F/P_F$, and consider the discrete subgroup $(W_F/P_F)^0$ generated by $\Fr$ and $s$. By working with the preimage $W_F^0$ of this discrete subgroup in $W_F$, one defines the affine finite type $\ZZ[1/p]$-scheme $X_{{^LG}}^e = \Spec(\fR^e_{^LG})$ representing the functor
\begin{align*}
\underline{Z}^1(W_F^0/P_F^e,\widehat{G}) : \ZZ[1/p]\text{-algebras} &\to \text{Sets}\\
R&\mapsto Z^1(W_F^0/P_F^e,\widehat{G}(R))
\end{align*}

As explained in \cite[Rem 6.9(ii)]{dhkm_moduli} , if $\kappa$ is an algebraically closed field of characteristic zero, the $\kappa$-points of the affine GIT quotient scheme $X_{^LG}^e\sslash \widehat{G}$ are precisely the $L$-morphisms $\phi:W_F\to {^LG}(\kappa)$ that are trivial on $P_F^e$ and have the property that $\phi(W_F)$ is \emph{semisimple} (i.e., if it is contained in a proper parabolic subgroup, then it is contained in a Levi subgroup of that parabolic). We call such an $L$-morphism a \emph{semisimple parameter} over $\kappa$.

We define $\fR_{^LG} = \lim\limits_{\underset{e}{\leftarrow}}\fR_{^LG}^e$. The coarse moduli space defined by the corresponding ind-affine scheme was introduced for $GL_n$ over $W(\Fl)$ in \cite{helm_curtis} and expanded to $\ZZ[1/p]$ and other connected reductive groups independently in \cite{dhkm_moduli, zhu, fargues_scholze}.

Let $\Fr,s_1,\dots, s_k$ be a choice of generators of the finitely presented group $W_F^0/P_F^e$, where the $s_i$'s topologically generate $I_F/P_F^e$. We have ``universal elements'' $\Fam, \sigma_1,\dots, \sigma_k\in \widehat{G}(\fR_{^LG}^e)$ such that 
$$(\Fam,\Fr),(\sigma_1,s_1),\dots, (\sigma_k,s_k)\in {^LG}$$ satisfy the defining relations for $W_F^0/P_F^e$. The ``universal parameter'' $$\phi^e_{\univ}:W_F^0/P_F^e\to {^LG}(\fR_{^LG}^e)$$ is defined by $\phi^e_{\univ}(\Fr) = (\Fam,\Fr)$ and and $\phi^e_{\univ}(s_i) = ( \sigma_i,s_i)$. This is not truly a parameter since it is only defined on the discretized subgroup $W_F^0/P_F^e$. 

We recall the notion of $\ell$-adic continuity introduced in \cite[Sec 2.5]{dhkm_moduli}. Fix a prime $\ell\neq p$. 
\begin{definition}[{\cite[Def 2.12]{dhkm_moduli}}]\label{def:ladiccontinuity}
Let $R$ be a $\ZZ[1/p]$-algebra. We say an $L$-morphism $\phi:W_F\to {^LG}(R)$ is $\ell$-adically continuous if there exists a ring homomorphism $f:R'\to R$ with $R'$ $\ell$-adically separated and $\phi':W_F\to {^LG(R')}$ satisfying:
\begin{enumerate}[(i)]
\item $\phi = {^LG}(f)\circ \phi'$
\item ${^LG}(\pi_n)\circ \phi':W_F\to {^LG}(R'/\ell^n R')$ is continuous for all $n$ where $\pi_n$ denotes the projection $R'\to R'/\ell^nR'$.
\end{enumerate}
\end{definition}

By \cite[Thm 2.13, Thm 4.1]{dhkm_moduli}, if we extend scalars to $\fR_{^LG,\ell}^e = \fR_{^LG}^e\otimes_{\ZZ[1/p]}\ZZ_{\ell}$, the ring $\fR_{^LG,\ell}^e$ is $\ell$-adically separated and there is a unique $\ell$-adically continuous $L$-morphism $\phi^e_{\ell,\univ}:W_F/P_F^e\to {^LG}(\fR_{^LG,\ell}^e)$ extending $\phi^e_{\univ}$. Moreover, $\fR_{^LG,\ell}^e$ is independent of the choice of topological generators, up to canonical isomorphism, and every $\ell$-adically continuous $L$-morphism $\phi:W_F/P_F^e\to {^LG}(R)$ over a Noetherian $\ell$-adically separated $\ZZ_{\ell}$-algebra $R$ arises by base changing $\phi^e_{\ell,\univ}$ along a homomorphism $\fR^e_{^LG,\ell}\to R$. 

Let $\kappa$ be an algebraically closed field of characteristic zero that is a $\ZZ_{\ell}$-algebra. We say a semisimple parameter $W_F\to {^LG(\kappa)}$ is $\ell$-adically continuous if it is continuous in the $\ell$-adic topology on $\kappa$. For example, when $\kappa = \qlb$ and $\phi$ is \emph{integral}, i.e., $\phi(\Fr)$ is a compact element, this coincides with Definition~\ref{def:ladiccontinuity} by \cite[Prop 6.23]{dhkm_conjecture}.

\subsection{Classical groups and conjugate self-dual parameters}\label{classicalgroupsdefinitions}
Let $E/F$ be a trivial or a quadratic extension of $p$-adic fields, let $c$ be the generator of $\Gal(E/F)$, let $\epsilon\in \{\pm 1\}$ and let $(V,h)$ be an $\epsilon$-hermitian space. We define the isometry group
$$U(V,h) = \{g\in \mathrm{GL}_E(V)\ :\ h(gv, gw) = h(v,w),\ v,w\in V\}.$$
\begin{itemize}
\item When $E/F$ is quadratic, $U(V,h)$ is a unitary group,
\item When $E=F$ and $\epsilon = -1$, $U(V,h)$ is a symplectic group,
\item When $E=F$ and $\epsilon = 1$, $U(V,h)$ is an orthogonal group.
\end{itemize}
Note that $U(V,h)$ is the $F$-points of a possibly disconnected reductive group defined over $F$. In this article we use the following definition of ``classical group:''

\begin{definition}
A classical group $G$ is a unitary, symplectic, or odd special orthogonal group. If $V$ has dimension zero, $G$ is the trivial group. In particular, $G$ is connected. 
\end{definition}

Let $G$ be a quasisplit classical group. We define conjugate self-dual Langlands parameters for $G$ following \cite[Section 8]{ggp_symplectic_local}. 

First, suppose $E=F$ and $G$ is a symplectic or odd special orthogonal group. In these cases, $G$ is split. Let ${^LG}=\widehat{G}$ be its Langlands dual group, defined over $\ZZ$ by a Chevalley basis. Let $m= 2n+1$ or $2n$, respectively, and let $\cR:{^LG}\to \GL_m$ be the standard representation. If $A$ is a $\ZZ[1/p]$-algebra and $\phi:W_F\to {^LG}(A)$ is a semisimple $A$-valued Langlands parameter we will consider the composite $\cR\circ\phi:W_F\to \GL_m(A)$ and call it a self-dual $A$-valued parameter for $G$.

Next, suppose $G$ is unitary, let $^LG = \widehat{G}\rtimes \Gal(E/F) = \GL_m\rtimes \Gal(E/F)$, and let $\phi:W_F\to {^LG}(A)$ be an $A$-valued Langlands parameter. We will let $\cR\circ\phi$ denote the restriction $\phi|_{W_E}:W_E\to \GL_m(A)$ and refer to it as a conjugate self-dual $A$-valued parameter for $G$.

\begin{prop}[{\cite[Thm 8.1]{ggp_symplectic_local}}]
Let $\phi$ and $\phi'$ be parameters valued in ${^LG}(\C)$. Then $\phi$ and $\phi'$ are conjugate in $\widehat{G}(\C)$ if and only if $\cR\circ\phi$ and $\cR\circ \phi'$ are conjugate in $\GL_m(\C)$.
\end{prop}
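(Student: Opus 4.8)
The ``only if'' direction is immediate: conjugating $\phi$ by $g \in \widehat{G}(\C)$ conjugates $\cR\circ\phi$ by $g$ (in the split cases via the homomorphism $\cR:\widehat{G}\to\GL_m$, and in the unitary case directly, since $\cR\circ\phi = \phi|_{W_E}$ and $\widehat{G}(\C) = \GL_m(\C)$). The content is the converse, which I would reduce to a statement about invariant forms on semisimple representations. In the split cases the standard representation identifies $\widehat{G}(\C)$ with the isometry group of a nondegenerate symmetric form on $\C^m$ (when $G$ is symplectic, so $m = 2n+1$) or of a symplectic form on $\C^m$ (when $G$ is odd special orthogonal, so $m=2n$), and a parameter is the same datum as a semisimple $\rho:W_F\to\GL_m(\C)$ admitting such an invariant form. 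Since $m$ is odd in the symmetric case, $O_m(\C) = SO_m(\C)\times\{\pm I\}$ with $\pm I$ central, so conjugacy by the full orthogonal group coincides with conjugacy by $\widehat{G}(\C)=SO_m(\C)$, and the ``special'' condition causes no trouble. In the unitary case $\cR\circ\phi = \phi|_{W_E}$ is a semisimple representation admitting an invariant $\C$-bilinear pairing $\rho\times\rho^c\to\C$ of a prescribed sign, and $\widehat{G}(\C)$ together with the semilinear part of the $L$-morphism recovers the isometry group of that pairing; since $\phi$ and $\phi'$ are $L$-morphisms with the same image in $\Gal(E/F)$, it suffices to conjugate the $W_E$-restrictions inside the unitary group and then check the semilinear part matches, which it does because both parameters are conjugate-self-dual of the same sign.

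Thus in all cases the proposition follows from the linear-algebra lemma at the heart of \cite[Thm 8.1]{ggp_symplectic_local}: \emph{if $\rho,\rho':\Gamma\to\GL_m(\C)$ are semisimple, each admits a nondegenerate invariant form of a fixed symmetry type $\varepsilon$ (bilinear, or of the $c$-twisted flavour in the unitary case), and $\rho' = g\rho g^{-1}$ for some $g\in\GL_m(\C)$, then one may take $g$ to lie in the isometry group of a chosen invariant form on $\rho'$}. The mechanism: if $B$ is a $\rho'$-invariant form of type $\varepsilon$, then $g_*B := B(g^{-1}\cdot,g^{-1}\cdot)$ is again a $\rho'$-invariant form of type $\varepsilon$ (because $\rho$ preserves $B$ and $\rho'=g\rho g^{-1}$), so it is enough to know that the centralizer $Z_{\GL_m(\C)}(\rho')$ acts transitively on nondegenerate $\rho'$-invariant forms of type $\varepsilon$. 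Given this, pick $z\in Z_{\GL_m(\C)}(\rho')$ with $z_*(g_*B)=B$; then $h:=zg$ satisfies $h_*B=B$ and $h\rho h^{-1}=z\rho'z^{-1}=\rho'$. I would establish the transitivity by decomposing $\rho'$ into isotypic components using Schur's lemma: pairs of mutually dual non-self-dual irreducibles assemble into hyperbolic blocks; a self-dual irreducible carries a unique invariant form up to scalar, of a definite sign; and on an isotypic block $\rho_i^{\oplus d_i}$ with $\rho_i$ of sign $\varepsilon_i$ the invariant forms of global type $\varepsilon$ correspond to nondegenerate $d_i\times d_i$ matrices of congruence-type $\varepsilon\varepsilon_i$, which form a single orbit under $\GL_{d_i}(\C)\subset Z_{\GL_m(\C)}(\rho')$. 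Crucially, over $\C$ nondegenerate symmetric (resp.\ alternating) matrices of a given size form one congruence class, so the orbit is unique.

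The main obstacle is this transitivity step, and inside it the sign bookkeeping: one must verify that the prescribed symmetry type of the global invariant form pins down, block by block, exactly which of the self-dual constituents are ``orthogonal'' and which are ``symplectic'' (and, for unitary groups, which are conjugate-orthogonal versus conjugate-symplectic), so that having a $\GL_m$-conjugacy together with invariant forms of the given type on both sides genuinely determines the isometry-group orbit. The unitary case adds the bookkeeping of the $\Gal(E/F)$-semilinear structure on $\widehat{G} = \GL_m\rtimes\Gal(E/F)$, but once the statement for $W_E$-representations is in place this is routine. The two easy implications and the reduction to the lemma are formal; I am recording this only because the proposition is used below, the substance being \cite[Thm 8.1]{ggp_symplectic_local}.
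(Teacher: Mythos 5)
The paper offers no proof of this proposition: it is imported verbatim from \cite[Thm 8.1]{ggp_symplectic_local}, and your sketch is essentially the argument given there — transport the (fixed, standard) invariant form by the $\GL_m$-conjugating element, then use Schur's lemma, the definite sign of each self-dual (or conjugate-self-dual) irreducible constituent, hyperbolic pairing of the non-self-dual ones, and transitivity of the centralizer on nondegenerate invariant forms of a fixed type, with the odd-orthogonal observation $O_m(\C)=SO_m(\C)\times\{\pm I\}$ disposing of the special/full distinction. So your proposal is correct in outline and follows the same route as the cited source; the only part you leave compressed (the $\Gal(E/F)$-semilinear matching in the unitary case) is exactly the conjugate-duality formalism of loc.\ cit.
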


\noindent To shorten notation we will often abbreviate $\cR\circ \phi$ by $\cR\phi$.

\subsection{Harish-Chandra $\mu$-function}\label{subsec:mu}
Let $G$ be a classical group coming from an $\epsilon$-hermitian space $V$ as above. Let $V'$ be an $\epsilon$-hermitian space of the form $V' = W\oplus V \oplus W'$ where $W$, $W'$ are dimension $m$ and totally isotropic and $V'$ is orthogonal to $W\oplus W'$. This specifies a parabolic subgroup $P$ of the classical group $G'$ associated to $V'$, where $G'$ has the same type as $G$, having Levi subgroup $M = G\times GL(W) = G\times GL_m(F)$.

Fix a nontrivial additive character $\psi:F\to W(\Fl)^{\times}$. Let $U$ be the unipotent radical of $P$ and choose Haar measures $du$ and $du'$ on $U$ and $\overline{U}$ respectively, normalized with respect to $\psi$ according to the procedure described in \cite[B.2]{gan_ichino_2014}.

For a fixed depth $r\geq 0$ let $\cP(G)_r$ denote the progenerator of the depth $\leq r$ subcategory of $\Rep_{W(\Fl)}(G)$. Denote the Harish-Chandra $j$ function with respect to $\psi$ as $$j^{G'}_{\psi}(\cP(G)_r\otimes \cP(GL_m)_{r'})$$ as in Section~\ref{sec:jfunction}, where the Haar measures $du$ and $du'$ are normalized with respect to $\psi$. We note that $j_{\psi}^{G'}(\cP(G)_r\otimes \cP(GL_m)_{r'})$ is universal in the sense of Theorem~\ref{thm:universalj}. 

Note that $M$ is a maximal Levi subgroup of $G'$, and sending $(g_V,g_W)\in M$ to $\text{val}_E(\det(g_W))$ induces an isomorphism $W(\Fl)[M/M^0]\cong W(\Fl)[GL_m/GL_m^0]\cong W(\Fl)[X^{\pm 1}]$. In this sense, we view $ j_{\psi}^{G'}(\cP(G)_r\otimes \cP(GL_m)_{r'})$ as an element of $\fS^{-1}(\cZ_{W(\Fl),M}[X^{\pm 1}])$ where $\fS$ is the multiplicative subset of $\cZ_{W(\Fl),M}[X^{\pm 1}]$ consisting of polynomials whose first and last coefficients are units.

\subsection{Gamma factors for $L$-parameters}
Consider a triple $(R,\rho,\psi)$, where $R$ is a Noetherian $\ell$-adically separated $\wflb$-algebra, $\psi:F\to \wflb^{\times}$ is a nontrivial character, and $\rho:W_F\to \GL_n(R)$ is an $\ell$-adically continuous homomorphism. Let $S$ be the multiplicative subset of $R[X^{\pm 1}]$ consisting of polynomials whose first and last coefficients are units. In \cite{HM_deligne_langlands} there is associated to any such triple an element $$\gamma(\rho, X, \psi)\in S^{-1}\big(R[X,X^{-1}]\big)^{\times}.$$ This construction is compatible with extension of scalars along any homomorphism $R\to R'$, and it specializes to the classical Deligne--Langlands gamma factor when $R = \kappa$ is an algebraically closed field of characteristic zero.

Let $\phi:W_F\to {^LG}(A)$ and $\tau:W_F\to \GL_m(B)$ be semisimple parameters valued in Noetherian $W(\Fl)$-algebras $A$ and $B$, respectively. We take $R = A\otimes B$ and consider $$\gamma(\cR\phi\otimes \tau, r, X, \psi)\in S^{-1}\left(R[X,X^{-1}]\right)^{\times}.$$ Given homomorphisms $f:A\to A'$ and $g:B\to B'$ we have, by compatibility with scalar extension,
$$(f\otimes g)\left(\gamma(\cR\phi\otimes \tau, X, \psi)\right) = \gamma((\cR\phi\otimes\tau)\otimes_{f\otimes g}(A'\otimes B'),X, \psi).$$

\subsection{Converse theorem for classical groups}\label{subsec:conversethm}

Given $\phi:W_F\to {^LG}(A)$ and $\phi':W_F\to GL_m(B)$ as above, we define $j_{\psi}(\phi\otimes\phi',X,\psi)$ to be
$$\left(\gamma(\cR\phi\otimes {\phi'}^{\vee},X,\psi_E)\gamma((\cR\phi)^{\vee}\otimes \phi',X^{-1},\psi_E^{-1})\gamma(R\circ\phi',X^2,\psi)\gamma(R\circ{\phi'}^{\vee},X^{-2},\psi^{-1})\right)^{-1},$$
where $\cR$ is the standard representation described above and $R$ is $\text{Sym}^2$, $\wedge^2$, $\text{As}^+$, or $\text{As}^-$ if $G$ is orthogonal, symplectic, even unitary, or odd unitary, respectively. For constructing the Plancherel measure, it is customary to work with $\mu_{\psi}(\phi\otimes\phi',X,\psi) = j_{\psi}(\phi\otimes\phi',X,\psi)^{-1}$, but we will stay with $j_\psi(\phi\otimes\phi',X,\psi)$ in parallel with the framework of Section~\ref{sec:jfunction}.

\begin{prop}[\cite{dhkm_conjecture} Prop 7.7]\label{charzeroconverse}
Let $G$ be a classical group and let $\Kbar$ be an algebraic closure of $\K = \Frac(W(\Fl))$. Suppose $\phi_1, \phi_2:W_F\to {^LG}(\Kbar)$ are two semisimple parameters such that 
$$j_{\psi}(\phi_1\otimes \tau,X,\psi) = j_{\psi}(\phi_2\otimes \tau,X,\psi)$$ for all irreducible semisimple parameters $\tau:W_F\to GL_t(\Kbar)$ for all $t\leq m$. Then $\phi_1$ and $\phi_2$ are $\widehat{G}(\Kbar)$-conjugate.
\end{prop}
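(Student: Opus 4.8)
The plan is to reduce, via \cite[Thm 8.1]{ggp_symplectic_local} (stated just above), to showing that the two $m$-dimensional semisimple Weil-group representations $\cR\phi_1$ and $\cR\phi_2$ are isomorphic: these are representations of $W_F$ when $G$ is symplectic or odd special orthogonal, and of $W_E$ when $G$ is unitary, and if they are isomorphic then $\phi_1$ and $\phi_2$ are $\widehat{G}(\Kbar)$-conjugate. Since the parameters are semisimple there is no monodromy operator, so $\cR\phi_i$ really is a semisimple Galois representation, and ``isomorphic'' amounts to ``$\GL_m(\Kbar)$-conjugate'' as in loc.\ cit.

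First I would unwind $j_\psi(\phi\otimes\tau,X,\psi)$: of the four $\gamma$-factors in its definition, the last two, $\gamma(R\circ\tau,X^2,\psi)$ and $\gamma(R\circ\tau^\vee,X^{-2},\psi^{-1})$, depend only on $\tau$ and cancel when $\phi_1$ is compared with $\phi_2$. Hence the hypothesis is equivalent to the equality of rational functions in $X$ over $\Kbar$
\[
\gamma(\cR\phi_1\otimes\tau^\vee,X,\psi_E)\,\gamma((\cR\phi_1)^\vee\otimes\tau,X^{-1},\psi_E^{-1}) \;=\; \gamma(\cR\phi_2\otimes\tau^\vee,X,\psi_E)\,\gamma((\cR\phi_2)^\vee\otimes\tau,X^{-1},\psi_E^{-1})
\]
for every irreducible semisimple $\tau$ with $\dim\tau\le m$ (in the unitary case $\cR\phi_i$ and $\tau^\vee$ understood restricted to $W_E$). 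Each $\gamma(\mu,X,\psi)$ is, up to a monomial in $X$ times a unit, a ratio of Artin $L$-factors, which depend on the semisimple representation $\mu$ only through the action of Frobenius on $\mu^{I_F}$. I would then argue that the finite zeros and poles of the left-hand side, with multiplicity, determine the multiset of Frobenius eigenvalues on $(\cR\phi_1\otimes\tau^\vee)^{I_F}$. Letting $\tau$ run over all irreducibles of dimension $\le m$ — which suffices, because every irreducible constituent of an $m$-dimensional representation has dimension $\le m$ — this recovers, inertial class by inertial class, the irreducible constituents of $\cR\phi_1$ together with their multiplicities; comparison with $\cR\phi_2$ yields $\cR\phi_1\cong\cR\phi_2$. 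Multiplicativity and inductivity of $\gamma$-factors let one organize this as an induction on $m$, peeling off one top-dimensional constituent at a time with a suitably chosen $\tau$ (e.g.\ $\tau$ the contragredient of such a constituent).

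The main obstacle is precisely this extraction step: reading individual constituent multiplicities off the divisor of a \emph{product} of two $\gamma$-factors. One must separate the monomial $\varepsilon$-contribution from the finite part of the divisor; one must account for the fact that the substitution $X\mapsto X^{-1}$ in the second factor reflects the divisor of the first, so that the pole of $\gamma$ contributed by one constituent can land on a zero contributed by another (the numerator and denominator $L$-factors of $\gamma$ involve mutually reciprocal twist variables, with the usual $q^{\pm1}$ shift), forcing a careful bookkeeping of cancellations; and in the unitary case one must verify that twisting by restrictions to $W_E$ of irreducible $W_F$-representations detects enough of the conjugate-self-dual representation $\cR\phi_i$ — here conjugate-self-duality helps, since a constituent and its conjugate-dual must occur together. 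I expect all of this to be manageable by the induction on $m$ just described, exploiting the self-duality. An alternative to the direct pole count is to route the argument through the Helm--Moss compatibility of these $\gamma$-factors with the local Langlands correspondence for $\GL$ (\cite{HM_deligne_langlands}) and to invoke the local converse theorem for $\GL_m\times\GL_t$.
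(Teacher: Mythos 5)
Your opening move (discarding the factors $\gamma(R\circ\tau,X^2,\psi)\gamma(R\circ\tau^\vee,X^{-2},\psi^{-1})$, which depend only on $\tau$) is exactly the first step of the paper's proof, but after that the two arguments diverge completely. The paper does not reprove a converse theorem at all: having cancelled the $\tau$-only factors, the hypothesis becomes an equality of the Plancherel-measure expressions $\mu_\psi=j_\psi^{-1}$, which is precisely the hypothesis of \cite[Prop 7.7]{dhkm_conjecture}; that result is then quoted, and the only remaining point is that it is stated for $\C$-valued parameters, which is handled by the abstract isomorphism $\Kbar\cong\C$ (both are algebraically closed of characteristic zero with the cardinality of $\C$). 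You instead set out to prove the underlying converse theorem from scratch, and the central step of your plan --- recovering $\cR\phi_1\cong\cR\phi_2$ from the equality, for each irreducible $\tau$, of the \emph{product} $\gamma(\cR\phi_i\otimes\tau^\vee,X,\psi_E)\,\gamma((\cR\phi_i)^\vee\otimes\tau,X^{-1},\psi_E^{-1})$ --- is left as a sketch. That step is exactly the content of the cited result, and you yourself identify the obstruction without resolving it: because the second factor is the reflection $X\mapsto X^{-1}$ of a gamma factor of the dual pair, zeros and poles coming from different constituents of the conjugate-self-dual representation $\cR\phi_i$ can genuinely cancel in the product, so ``reading off the divisor'' does not by itself recover the constituents and their multiplicities; making this work requires a careful argument exploiting conjugate-self-duality, not just bookkeeping. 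As written, then, your proposal has a gap precisely where the paper instead invokes \cite[Prop 7.7]{dhkm_conjecture}.

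Two smaller points. First, your fallback suggestion of invoking the local converse theorem for $\GL_m\times\GL_t$ via \cite{HM_deligne_langlands} does not apply as stated: the hypothesis only controls the product of the two pair gamma factors (at $X$ and at $X^{-1}$), not the individual $\GL\times\GL$ gamma factors that the converse theorem requires, so one would first have to separate the factors --- which is again the same unresolved cancellation issue. Second, the reduction via the proposition of \cite[Thm 8.1]{ggp_symplectic_local} (conjugacy of $\cR\phi_1$ and $\cR\phi_2$ in $\GL_m$ implies $\widehat{G}$-conjugacy) is sound and is indeed how the conclusion is reached in the cited literature; also note the paper proves a strengthening immediately afterwards (Corollary~\ref{cor:stronger_converse}, allowing all semisimple $\tau$ of dimension exactly $m$) by a stability-of-gamma-factors argument, which is the kind of multiplicativity/stability manipulation your sketch gestures at, but used for a different purpose.
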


\begin{proof} Note that the equality $j_{\psi}(\phi_1\otimes \tau,X,\psi) = j_{\psi}(\phi_2\otimes \tau,X,\psi)$ in the statement of Proposition~\ref{charzeroconverse} implies the hypothesis of \cite[Prop 7.7]{dhkm_conjecture} because for $\mu_{\psi} = j_{\psi}^{-1}$ and the factors $$\gamma(R\circ\tau,X^2,\psi)\gamma(R\circ{\tau}^{\vee},X^{-2},\psi^{-1})$$ depend only on $\tau$ and can be canceled.
The statement proved in \cite{dhkm_conjecture} is for $\C$-valued parameters, but $\Kbar$ is an algebraically closed field of characteristic zero that has the same cardinality as $\C$ and thus is isomorphic to $\C$.
\end{proof}
We record the following reformulation of the converse theorem:
\begin{corollary}\label{cor:stronger_converse}
The same statement as Proposition~\ref{charzeroconverse} remains true if $j_{\psi}(\phi_1\otimes \tau,X,\psi) = j_{\psi}(\phi_2\otimes \tau,X,\psi)$ for all semisimple parameters $\tau:W_F\to GL_m(\Kbar)$.
\end{corollary}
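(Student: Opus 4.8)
The plan is to deduce the hypothesis of Proposition~\ref{charzeroconverse} from the (a priori weaker) hypothesis in the statement by exploiting the multiplicativity of Langlands--Deligne $\gamma$-factors under direct sums, the only genuinely new ingredient being the stability of $\gamma$-factors under highly ramified twists. First I would strip away the $\phi$-independent part of the $j$-function: in $j_\psi(\phi\otimes\tau,X,\psi)$ the factors $\gamma(R\circ\tau,X^2,\psi)\gamma(R\circ\tau^\vee,X^{-2},\psi^{-1})$ depend only on $\tau$, so they cancel in the equality $j_\psi(\phi_1\otimes\tau)=j_\psi(\phi_2\otimes\tau)$, which thereby becomes equivalent to $F_{\phi_1}(\tau)=F_{\phi_2}(\tau)$ for
\[
F_\phi(\tau):=\gamma(\cR\phi\otimes\tau^\vee,X,\psi_E)\,\gamma((\cR\phi)^\vee\otimes\tau,X^{-1},\psi_E^{-1})\in\Kbar(X)^\times.
\]
Because the $\gamma$-factor of a tensor product is multiplicative in each variable with respect to direct sums, and $(\tau_1\oplus\tau_2)^\vee=\tau_1^\vee\oplus\tau_2^\vee$, the assignment $\tau\mapsto F_\phi(\tau)$ is multiplicative: $F_\phi(\tau_1\oplus\tau_2)=F_\phi(\tau_1)F_\phi(\tau_2)$. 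The hypothesis now says $F_{\phi_1}(\tau)=F_{\phi_2}(\tau)$ for every semisimple $\tau$ of dimension exactly $m$, whereas Proposition~\ref{charzeroconverse} will apply once I know $F_{\phi_1}(\tau')=F_{\phi_2}(\tau')$ for every irreducible $\tau'$ of dimension $t\le m$.

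Given such a $\tau'$, the case $t=m$ is the hypothesis. When $t<m$ I would fix a character $\chi\colon W_F\to\Kbar^\times$ of large conductor and apply the hypothesis to the dimension-$m$ semisimple parameter $\tau'\oplus\chi^{\oplus(m-t)}$; multiplicativity then gives $F_{\phi_1}(\tau')\,F_{\phi_1}(\chi)^{m-t}=F_{\phi_2}(\tau')\,F_{\phi_2}(\chi)^{m-t}$, so everything reduces to cancelling the padding, i.e.\ to the equality $F_{\phi_1}(\chi)=F_{\phi_2}(\chi)$ for $\chi$ ramified enough. This is precisely the stability of $\gamma$-factors, applied to the $m$-dimensional representations $\cR\phi_1,\cR\phi_2$ and their duals.

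The hard part, I expect, is the hypothesis of stability that $\det(\cR\phi_1)=\det(\cR\phi_2)$: this is automatic for symplectic and odd orthogonal $G$, where $\widehat G\subset\mathrm{SL}_m$ makes both determinants trivial, but in the unitary case $\det(\cR\phi_i)$ is only a conjugate-self-dual character of $W_E$ and must first be shown equal for $i=1,2$ using the hypothesis — e.g.\ by testing against $\tau=\cR\phi_2^\vee$ (a semisimple parameter of dimension $m$) and comparing pole data, or via a stable form of the Asai $\gamma$-factor. As a fallback that avoids stability, multiplicativity alone forces $F_{\phi_1}/F_{\phi_2}$ to be a constant $c_d$ on dimension-$d$ semisimple parameters with $c_d=c_1^{\,d}$ and $c_1^{\,m}=1$, and the local functional equation identifies $c_1$ as a positive real power of $q$ (coming from the conductor term of the $\varepsilon$-factor), whence $c_1=1$. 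Either way $F_{\phi_1}$ and $F_{\phi_2}$ agree on all irreducibles of dimension $\le m$, and Proposition~\ref{charzeroconverse} then yields that $\phi_1$ and $\phi_2$ are $\widehat G(\Kbar)$-conjugate.
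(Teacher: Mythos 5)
Your proposal is correct and follows essentially the same route as the paper's proof: cancel the $\tau$-only factors, pad an irreducible $\tau_0$ of dimension $t<m$ with $m-t$ copies of a highly ramified character $\chi$, and use multiplicativity of gamma factors together with stability to cancel the padding and reduce to Proposition~\ref{charzeroconverse}. Your additional discussion of the determinant hypothesis needed for stability (and the fallback argument, whose final identification of $c_1$ is shaky) goes beyond the paper, which simply invokes stability for $\cR\phi_1$ and $\cR\phi_2$; that extra caution is reasonable but not part of the paper's argument.
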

\begin{proof}
For any $t<m$, consider the semisimple $W_F$ representation $\tau$ given by $\tau_0\oplus \chi^{\oplus m-t}$ where $\tau$ is irreducible and $\chi$ is a character. By the multiplicativity of gamma factors we write $j_{\psi}(\phi_i\otimes \tau,X,\psi)$ as $j_{\psi}(\phi_i\otimes\tau_0,X,\psi)j_{\psi}(\phi_i\otimes\chi,X,\psi)^{m-t}$. By the stability of gamma factors, if $\chi$ is highly ramified, $\gamma(\cR\phi_1\otimes\chi,X,\psi) = \gamma(\cR\phi_2\otimes\chi,X,\psi)$, so after canceling factors on either side of the equality $j_{\psi}(\phi_1\otimes \tau,X,\psi) = j_{\psi}(\phi_2\otimes \tau,X,\psi)$, we have $j_{\psi}(\phi_1\otimes\tau_0,X\psi) = j_{\psi}(\phi_2\otimes\tau_0,X,\psi)$. 
\end{proof}

\subsection{Characterizing a local Langlands morphism with universal factors}
Now suppose we are given a continuous ring homomorphism
$$\mathcal{L}:\fR_{^LG, \ZZ_{\ell}'}^{\widehat{G}}\to \cZ_{G, \ZZ_{\ell}'}$$ as in the introduction. We will show that, in the case of classical groups, there exists at most one ring homomorphism $\mathcal{L}$ taking the Langlands--Deligne gamma factors of a tensor product to the universal Plancherel measure of a maximal Levi subgroup. First, we prove the result over $\Kbar$, an algebraic closure of the fraction field $\K$ of $W(\Fl)$. Fix $\psi:F\to W(\Fl)^{\times}$. Identify $G\times GL_m$ with a Levi subgroup in a classical group $G'$ as in Subsection~\ref{subsec:mu}. 

\begin{thm}\label{uniquenessqlb}
There exists at most one ring homomorphism $$\mathcal{L}_{\Kbar}:\fR_{^LG, \Kbar}^{\widehat{G}}\to \cZ_{G,\Kbar}$$ such that, for $r\geq 0$, $$(\mathcal{L}_{\Kbar}\otimes \mathcal{L}_{GL_m,\Kbar})(j_{\psi}(\phi_{\univ,\Kbar}^{e(r)}\otimes \phi_{\univ,\Kbar}^{e(r')},X,R,\psi)) = j_{\psi}^{G'}(\cP(G)_{\Kbar,0,r}\otimes \cP(GL_m)_{\Kbar,0,r'}).$$
\end{thm}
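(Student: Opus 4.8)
The plan is to reduce the uniqueness to a point-by-point comparison of semisimple Langlands parameters over $\Kbar$ and then invoke the converse theorem. So suppose $\mathcal{L}_1$ and $\mathcal{L}_2$ both satisfy the displayed identity. I would first note that it suffices to show $\lambda\circ\mathcal{L}_1=\lambda\circ\mathcal{L}_2$ for every ring homomorphism $\lambda:\cZ_{G,\Kbar}\to\Kbar$: the ring $\cZ_{G,\Kbar}$ is reduced (a product of invariant rings of tori under finite groups, by the Bernstein decomposition), and being a product of Jacobson $\Kbar$-algebras with residue fields $\Kbar$ at closed points, any element vanishing at all such $\lambda$ is nilpotent, hence zero. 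Now a $\Kbar$-point $\lambda\circ\mathcal{L}_i$ of the GIT quotient $\fR_{^LG,\Kbar}^{\widehat{G}}$ is the same datum as a $\widehat{G}(\Kbar)$-conjugacy class of semisimple parameters; fixing a representative $\phi_i:W_F\to{^LG}(\Kbar)$, the task becomes to show that $\phi_1$ and $\phi_2$ are $\widehat{G}(\Kbar)$-conjugate.

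For this I would use Corollary~\ref{cor:stronger_converse}, which reduces the problem to the equality $j_\psi(\phi_1\otimes\tau,X,\psi)=j_\psi(\phi_2\otimes\tau,X,\psi)$ for every semisimple parameter $\tau:W_F\to GL_m(\Kbar)$. The crucial observation is that the right-hand side $j_\psi^{G'}(\cP(G)_{\Kbar,0,r}\otimes\cP(GL_m)_{\Kbar,0,r'})$ of the defining identity is intrinsic to $G'$ and independent of $\mathcal{L}_i$; equating the left-hand sides of the identity for $\mathcal{L}_1$ and for $\mathcal{L}_2$ therefore gives, for all $r,r'\geq 0$, an equality of the two universal-$j$-function images inside the bounded-depth part of $\cS^{-1}((\cZ_{G,\Kbar}\otimes\cZ_{GL_m,\Kbar})[X^{\pm1}])$. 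Since $\mathcal{L}_{GL_m,\Kbar}$ realizes the classical local Langlands correspondence for $GL_m$ on $\Kbar$-points \cite{helm_curtis}, every semisimple $\tau$ is of the form $\mu\circ\mathcal{L}_{GL_m,\Kbar}$ for some $\Kbar$-point $\mu:\cZ_{GL_m,\Kbar}\to\Kbar$. Choosing $r,r'$ beyond the (finite) depths of $\lambda$ and $\mu$ and specializing the above equality along $\lambda\otimes\mu$, the compatibility of the universal $\gamma$-factor construction of \cite{HM_deligne_langlands} with extension of scalars, together with the universal property of $\phi_{\univ}^{e}$ recorded in Section~\ref{classicalgroupsdefinitions} (so that $\lambda\circ\mathcal{L}_i$ recovers $\phi_i$ and $\mu\circ\mathcal{L}_{GL_m,\Kbar}$ recovers $\tau$), identifies the $\mathcal{L}_i$-side with $j_\psi(\phi_i\otimes\tau,X,\psi)$ in the notation of Subsection~\ref{subsec:conversethm}. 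This yields the required equality, and then Corollary~\ref{cor:stronger_converse} gives $\phi_1\sim\phi_2$, which by the first paragraph forces $\mathcal{L}_1=\mathcal{L}_2$.

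The step I expect to demand the most care is the specialization in the second paragraph: verifying that applying $\mathcal{L}_i\otimes\mathcal{L}_{GL_m,\Kbar}$ and then $\lambda\otimes\mu$ to $j_\psi(\phi_{\univ}^{e(r)}\otimes\phi_{\univ}^{e(r')},X,R,\psi)$ genuinely returns $j_\psi(\phi_i\otimes\tau,X,\psi)$. This requires unwinding the continuity hypothesis on the $\mathcal{L}_i$ — the integers $e(r)$ attached to $\mathcal{L}_1$ and $\mathcal{L}_2$ (and to $\mathcal{L}_{GL_m}$) need not coincide, so one passes to a common upper bound and uses the transition maps $\fR_{^LG}^{e}\to\fR_{^LG}^{e'}$, noting that $\phi_i$ is then trivial on the corresponding $P_F^{e}$ — checking that the factorization through $(\fR_{^LG}^{e})^{\widehat{G}}$ is compatible with the conjugation-invariance of the $\gamma$-factors, and tracking the Haar-measure normalizations relative to $\psi$ on both sides. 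These points are routine but delicate; everything else is a direct appeal to Corollary~\ref{cor:stronger_converse}, the reducedness of the Bernstein centre, and the cited properties of the moduli space \cite{dhkm_moduli} and of the Langlands--Deligne $\gamma$-factors \cite{HM_deligne_langlands}.
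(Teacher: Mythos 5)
Your proposal is correct and follows essentially the same route as the paper: reduce by reducedness of the Bernstein centre to comparing $\Kbar$-points, pull each point back along $\mathcal{L}_i$ to a semisimple parameter $\phi_i$ via the universal parameter, specialize the universal $j$-identity (using compatibility of the $\gamma$-factors and of $j^{G'}_\psi$ with scalar extension, and that $\mathcal{L}_{GL_m}$ is bijective on points) to get $j_\psi(\phi_1\otimes\tau,X,\psi)=j_\psi(\phi_2\otimes\tau,X,\psi)$ for all $\tau$, and conclude by Corollary~\ref{cor:stronger_converse}. The only differences are cosmetic: the paper closes with Zariski's lemma over arbitrary characteristic-zero residue fields where you invoke the Jacobson property, and it keeps the two depth bounds $e_1(r),e_2(r)$ separate (deducing their equality from the conjugacy) where you pass to a common level via the transition maps.
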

\begin{proof}
In this proof only, we will drop the subscripts $\Kbar$, ${^LG}$, and $G$ to ease notation. Let $\cL_1$ and $\cL_2$ be two such homomorphisms. Fix $r\geq 0$ and let $e_1 = e_1(r)$ and $e_2 = e_2(r)$ denote the integers $e(r)$ defined as above for $\cL_1$ and $\cL_2$, respectively. Each $\cL_i$ restricts to finite depth:
$$\cL_i:(\fR^{e_i(r)})^{\hat{G}}\to \cZ_r\ .$$ We will show that $e_1=e_2$ and that these two restrictions coincide.

Consider a point $x:\cZ_{r}\to \Kbar$, and let $y_i:(\fR^{e_i})^{\hat{G}}\to \Kbar$ be its pullback under $\cL_i$. Each $y_i$ determines a closed point of the GIT quotient $Z^1(W_F, \widehat{G}_{\Kbar})\sslash \widehat{G}(\Kbar)$, hence a closed $\widehat{G}(\Kbar)$-orbit of semisimple 1-cocycles $W_F\to \widehat{G}(\Kbar)$. Choose one such cocycle $\phi_i:W_F\to \widehat{G}(\Kbar)$; the argument that follows does not depend on this choice. By the universal property, there exist unique homomorphisms 
$$f_i:\fR^{e_i}\to \Kbar$$ such that $$\phi_i = \phi^{e_i,\univ}\otimes_{\fR^{e_i},f_i}\Kbar.$$ Note that $f_i|_{(\fR^{e_i})^{\hat{G}}} = y_i$.

For every nonnegative $t$ and every $f:\fR^{e(r')}_{GL_m}\to \Kbar$ we set $\phi = \phi^{e(r'),\univ}_{GL_m}\otimes_{\fR^{e(r')}_{GL_m},f}\Kbar$. Set $y' = f|_{(\fR^{e(r')}_{GL_m})^{GL_m(\Kbar)}}$ and let $x' = y'\circ \cL_{GL_m}^{-1}:\cZ_{GL_m}\to \Kbar$.

Note that $j_{\psi}(\phi_i\otimes \phi',X,R,\psi)$ depends only on the semisimplification of $\phi_i$ and $\phi'$, and the coefficients of $j_{\psi}(\phi_{^LG}^{e,\univ}\otimes \phi_{GL_m}^{e(r'),\univ},X,R,\psi)$ live in the subring $(\fR^e_{^LG})^{\widehat{G}}\otimes(\fR^{e(r')}_{GL_m})^{GL_m(\Kbar)}$.

\begin{align*}
j_{\psi}(\phi_i\otimes \phi',X,R,\psi) &= (f_i\otimes f)\left(j_{\psi}(\phi_{^LG}^{e,\univ}\otimes \phi_{GL_m}^{e(r'),\univ},X,R,\psi))\right) \\
& = (y_i\otimes y)\left(j_{\psi}(\phi_{^LG}^{e,univ}\otimes \phi_{GL_m}^{e(r'),\univ},X,R,\psi))\right)\\
& = ((x\circ \cL_i)\otimes (x'\circ \cL_{GL_m}))\left(j_{\psi}(\phi_{^LG}^{e,\univ}\otimes \phi_{GL_m}^{e(r'),univ},X,R,\psi))\right)\\
& = (x\otimes x')\left(j_{\psi}(\cP(G)_{\Kbar,0,r}\otimes \cP(GL_m)_{\Kbar,0,r'}\right).
\end{align*}
The final expression is the same whether $i$ is $1$ or $2$, so we conclude
$$j_{\psi}(\phi_1\otimes \phi',X,R,\psi)=j_{\psi}(\phi_2\otimes \phi',X,R,\psi)$$ for all $\phi'$.

By Corollary~\ref{cor:stronger_converse} this implies $\phi_1$ and $\phi_2$ are in the same $\widehat{G}(\Kbar)$ orbit of $Z^1(W_F,\widehat{G}_{\Kbar})$. In particular, $e_1=e_2$; set $e=e_1=e_2$. Now $f_1$ coincides with $f_2$ on the subring of invariants $(\fR^e_{\Kbar})^{\widehat{G}(\Kbar)}$. In other words, $y_1 = y_2$ and
$$x\circ \cL_1 = x\circ \cL_2.$$ Now let $\alpha$ be any element in $(\fR^e)^{\widehat{G}}$. We have shown that $\cL_1(\alpha)- \cL_2(\alpha)$ is in the kernel of every homomorphism $x:\cZ_r\to \Kbar$. 

Given a homomorphism $x:\cZ_r\to \kappa$ where $\kappa$ is an arbitrary field of characteristic zero, Zariski's lemma implies $x$ factors through a subfield $\kappa'$ that is isomorphic to $\Kbar$, so $\cL_1(\alpha)- \cL_2(\alpha)$ is zero at $x$ by the above argument. Since $\cZ_r$ has no $\ell$-torsion, characteristic zero points $x:\cZ_r\to \kappa$ form a dense subset of $\Spec(\cZ_r)$ and, as $\cZ_r$ is reduced, this implies $\cL_1(\alpha) -\cL_2(\alpha)=0$, as desired.
\end{proof}

Since we have descended $j_{\psi}$ to $\ZZ'$ integrally we can now state the following corollary, which characterizes local Langlands morphisms over $\ZZ_{\ell}' = \ZZ_{\ell}[\sqrt{q}]$.

\begin{corollary}
In the setup of Theorem~\ref{uniquenessqlb}, there exists at most one continuous homomorphism
$$\mathcal{L}:\fR_{^LG, \ZZ_{\ell}'}^{\widehat{G}}\to \cZ_{G, \ZZ_{\ell}'}$$ such that $$(\mathcal{L}\otimes \mathcal{L}_{GL_m})(j_{\psi}(\phi_{\univ}^{e(r)}\otimes \phi_{\univ}^{e(r')},X,R,\psi)) = j_{\psi}^{G'}(\cP(G)_r\otimes \cP(GL_m)_{r'}).$$
\end{corollary}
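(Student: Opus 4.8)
The plan is to run the argument of the proof of Theorem~\ref{uniquenessqlb} over $\ZZ_{\ell}'$ rather than over $\Kbar$, testing equality of the two candidate morphisms on $\Kbar$-valued points of the Bernstein centre and then concluding by density. Concretely, I would start from two continuous homomorphisms $\mathcal{L}_1,\mathcal{L}_2\colon\fR_{^LG,\ZZ_{\ell}'}^{\widehat{G}}\to\cZ_{G,\ZZ_{\ell}'}$ satisfying the displayed identity for all $r,r'\geq 0$. Fixing $r$, continuity lets me replace each $\mathcal{L}_i$ by the induced map $(\fR^{e_i(r)}_{^LG,\ZZ_{\ell}'})^{\widehat{G}}\to\cZ^r_{G,\ZZ_{\ell}'}$ on the depth-$\leq r$ direct factor, and it suffices to show these coincide (which in particular forces $e_1(r)=e_2(r)$). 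This reduction to finite level, together with making precise how $\mathcal{L}_i\otimes\mathcal{L}_{GL_m}$ acts on the coefficients of the universal $j$-function (which, as recorded in the proof of Theorem~\ref{uniquenessqlb}, lie in $(\fR^{e_i(r)}_{^LG})^{\widehat{G}}\otimes(\fR^{e(r')}_{GL_m})^{GL_m}$), is exactly the bookkeeping already carried out there, and I would quote it verbatim.

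Next, for any homomorphism $x\colon\cZ^r_{G,\ZZ_{\ell}'}\to\Kbar$ the composite $x\circ\mathcal{L}_i$ is a $\Kbar$-point of the GIT quotient $X^{e_i(r)}_{^LG}\sslash\widehat{G}$, hence by \cite[Rem 6.9(ii)]{dhkm_moduli} the $\widehat{G}(\Kbar)$-conjugacy class of a semisimple parameter $\phi_i\colon W_F\to{^LG}(\Kbar)$; similarly any point $x'\colon\cZ^{r'}_{GL_m,\ZZ_{\ell}'}\to\Kbar$ produces, via $x'\circ\mathcal{L}_{GL_m}$, a semisimple parameter $\phi'\colon W_F\to\GL_m(\Kbar)$. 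Specializing the displayed identity along $x\otimes x'$ and invoking, on the right, the compatibility of the universal $j$-function $j_\psi^{G'}(\cP(G)_r\otimes\cP(GL_m)_{r'})$ with specialization at points of the Bernstein centre (Theorem~\ref{thm:universalj}, Proposition~\ref{prop:compatibility_of_j}(iii)), and on the left the compatibility of the Langlands--Deligne $\gamma$-factors with extension of scalars, I expect to obtain $j_\psi(\phi_1\otimes\phi',X,\psi)=j_\psi(\phi_2\otimes\phi',X,\psi)$ for every such $\phi'$. Since $\mathcal{L}_{GL_m}$ realises the (known) local Langlands correspondence for $\GL_m$, letting $r'$ and $x'$ vary makes $\phi'$ run over all semisimple parameters $W_F\to\GL_m(\Kbar)$; Corollary~\ref{cor:stronger_converse} then forces $\phi_1$ and $\phi_2$ to be $\widehat{G}(\Kbar)$-conjugate, whence $e_1(r)=e_2(r)=:e$ and $x\circ\mathcal{L}_1=x\circ\mathcal{L}_2$ on $(\fR^{e}_{^LG,\ZZ_{\ell}'})^{\widehat{G}}$.

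Finally I would conclude by the density step, exactly as at the end of the proof of Theorem~\ref{uniquenessqlb}: for any $\alpha\in(\fR^{e}_{^LG,\ZZ_{\ell}'})^{\widehat{G}}$, the element $\mathcal{L}_1(\alpha)-\mathcal{L}_2(\alpha)\in\cZ^r_{G,\ZZ_{\ell}'}$ is killed by every $\Kbar$-valued point, hence (via Zariski's lemma) by every point valued in a field of characteristic zero; since each block of $\cZ^r_{G,\ZZ_{\ell}'}$ is a reduced Noetherian $\ZZ_{\ell}'$-algebra with no $\ell$-torsion, its characteristic-zero points are dense in its spectrum, so $\mathcal{L}_1(\alpha)=\mathcal{L}_2(\alpha)$. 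Letting $r$ and $r'$ vary gives $\mathcal{L}_1=\mathcal{L}_2$.

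The step I expect to require the most care is the first-paragraph bookkeeping: checking that ``continuous'' genuinely licenses the passage to finite level for $\mathcal{L}_1$ and $\mathcal{L}_2$ simultaneously, and that the \emph{integrally defined} universal $j$-function of Section~\ref{sec:universal} and the integral $\gamma$-factors of \cite{HM_deligne_langlands} specialise at a $\Kbar$-point precisely to the finite-level identity $j_\psi(\phi_1\otimes\phi')=j_\psi(\phi_2\otimes\phi')$ used in Theorem~\ref{uniquenessqlb}. No genuinely new idea beyond that theorem is needed; the only additional input over $\ZZ_{\ell}'$ is the reducedness and $\ell$-torsion-freeness of the depth-$\leq r$ integral Bernstein centre, which is what allows the descent from $\Kbar$-points back to $\ZZ_{\ell}'$.
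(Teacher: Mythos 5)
Your argument is correct, but it is not the route the paper takes: the paper's proof of this corollary is a two-line reduction -- since the invariant ring $\fR_{^LG,\ZZ_\ell'}^{\widehat{G}}$, the Bernstein center $\cZ_{G,\ZZ_\ell'}$, and the two sides of the displayed identity are all compatible with flat base change, one extends scalars along $\ZZ_\ell'\to\Kbar$ and simply cites Theorem~\ref{uniquenessqlb}, the point being that $\cZ_{G,\ZZ_\ell'}$ injects into $\cZ_{G,\Kbar}$ (it is $\ell$-torsion free, being a limit of centers of Hecke algebras that are free $\ZZ_\ell'$-modules), so equality after base change forces equality over $\ZZ_\ell'$. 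You instead rerun the whole proof of Theorem~\ref{uniquenessqlb} integrally: test the two morphisms at $\Kbar$-valued points of $\cZ^r_{G,\ZZ_\ell'}$, apply the converse theorem (Corollary~\ref{cor:stronger_converse}) to conclude $x\circ\cL_1=x\circ\cL_2$ at each point, and finish by density of characteristic-zero points plus reducedness and torsion-freeness of the depth-$\leq r$ integral Bernstein center. This works: the bookkeeping you propose to quote from the theorem's proof (closed-orbit lifts $f_i$ of $x\circ\cL_i$, coefficients of the universal $\gamma$/$j$-factors lying in the tensor product of invariant subrings, compatibility of the factors of \cite{HM_deligne_langlands} with scalar extension) transfers verbatim, your use of the bijectivity of $\cL_{GL_m}$ on $\Kbar$-points to let $\phi'$ range over all semisimple parameters mirrors the paper's $x'=y'\circ\cL_{GL_m}^{-1}$, and the extra facts you invoke are available -- torsion-freeness as above, and reducedness of $\cZ^r_{G,\ZZ_\ell'}$ follows from torsion-freeness together with reducedness of its characteristic-zero fiber, while density of characteristic-zero points is again torsion-freeness plus the Jacobson property of the finitely generated $\QQ_\ell$-algebra $\cZ^r_{G,\ZZ_\ell'}[1/\ell]$. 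What the paper's route buys is brevity and a cleaner division of labor (all the pointwise work is quarantined in Theorem~\ref{uniquenessqlb}, and only flatness/injectivity is needed integrally); what your route buys is that you never have to verify that the integral identity literally base-changes to the $\Kbar$-identity of the theorem, at the cost of re-establishing the integral structure facts -- note that both arguments ultimately rest on the same torsion-freeness of the integral Bernstein center.
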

\begin{proof}
Both the source and target are compatible with flat base change, so after extending scalars along $\ZZ_{\ell}'\to \Kbar$, we can apply Theorem~\ref{uniquenessqlb}.
\end{proof}

\bibliographystyle{alpha}
\bibliography{lesrefer}

\begin{thebibliography}{DHKM24b}

\bibitem[Dat05]{dat}
Jean-Fran\c{c}ois Dat.
\newblock $\nu$-tempered representations of $p$-adic groups {I} : $l$-adic case.
\newblock {\em Duke Math. J.}, 126(3):397--469, 2005.

\bibitem[Dat09]{dat_finitude}
Jean-Fran\c{c}ois Dat.
\newblock Finitude pour les représentations lisses des groupes $p$-adiques.
\newblock {\em J. Inst. Math. Jussieu}, 8(1):261--333, 2009.

\bibitem[DHKM24a]{dhkm_finiteness}
Jean-Fran\c{c}ois Dat, David Helm, Robert Kurinczuk, and Gilbert Moss.
\newblock Finiteness for {H}ecke algebras of {$p$}-adic groups.
\newblock {\em J. Amer. Math. Soc.}, 37(3):929--949, 2024.

\bibitem[DHKM24b]{dhkm_conjecture}
Jean-Fran\c{c}ois Dat, David Helm, Robert Kurinczuk, and Gilbert Moss.
\newblock Local {L}anglands in families: The banal case, 2024.
\newblock arXiv:2406.09283.

\bibitem[DHKM25]{dhkm_moduli}
Jean-Fran\c{c}ois Dat, David Helm, Robert Kurinczuk, and Gilbert Moss.
\newblock Moduli of {L}anglands parameters.
\newblock {\em J. Eur. Math. Soc. (JEMS)}, 27(5):1827--1927, 2025.

\bibitem[FS24]{fargues_scholze}
Laurent Fargues and Peter Scholze.
\newblock Geometrization of the local {L}anglands correspondence, 2024.
\newblock arXiv:2102.13459.

\bibitem[GGP12]{ggp_symplectic_local}
Wee~Teck Gan, Benedict~H. Gross, and Dipendra Prasad.
\newblock Symplectic local root numbers, central critical {$L$} values, and restriction problems in the representation theory of classical groups.
\newblock Number 346, pages 1--109. 2012.
\newblock Sur les conjectures de Gross et Prasad. I.

\bibitem[GI14]{gan_ichino_2014}
Wee~Teck Gan and Atsushi Ichino.
\newblock Formal degrees and local theta correspondence.
\newblock {\em Invent. Math.}, 195(3):509--672, 2014.

\bibitem[Gir24]{girsch_twisted}
Johannes Girsch.
\newblock The twisted doubling method in algebraic families.
\newblock 2024.
\newblock arXiv:2410.22525.

\bibitem[Hel20]{helm_curtis}
David Helm.
\newblock Curtis homomorphisms and the integral {B}ernstein center for {$\mathrm{GL}_n$}.
\newblock {\em Algebra Number Theory}, 14(10):2607--2645, 2020.

\bibitem[HM17]{HM_deligne_langlands}
David Helm and Gilbert Moss.
\newblock {D}eligne--{L}anglands gamma factors in families.
\newblock 2017.
\newblock arXiv:1510.08743.

\bibitem[HM18]{HMconverse}
David Helm and Gilbert Moss.
\newblock Converse theorems and the local {L}anglands correspondence in families.
\newblock {\em Invent. Math.}, 214:999–1022, 2018.

\bibitem[Lam06]{lam_projective}
Tsit~Yuen Lam.
\newblock {\em {S}erre's problem on projective modules}.
\newblock Springer, 2006.

\bibitem[MT23]{mt_theta}
Gilbert Moss and Justin Trias.
\newblock Towards a theta correspondence in families for type {II} dual pairs.
\newblock 2023.
\newblock arXiv:2312.12031.

\bibitem[Wal03]{waldspurger}
Jean-Loup Waldspurger.
\newblock La formule de {P}lancherel pour les groupes {$p$}-adiques (d'apr\`es {H}arish-{C}handra).
\newblock {\em J. Inst. Math. Jussieu}, 2(2):235--333, 2003.

\bibitem[Zhu20]{zhu}
Xinwen Zhu.
\newblock Coherent sheaves on the stack of {L}anglands parameters.
\newblock 2020.
\newblock arXiv:2008.02998.

\end{thebibliography}
\end{document}